\documentclass[11pt]{article} 
\usepackage[a4paper, top=3.5cm, left=3.5cm, right=3.5cm, bottom=4cm]{geometry} 
\usepackage{amsmath, amssymb} 
\usepackage{enumerate} 
\usepackage[backend=biber, style=alphabetic, giveninits=true]{biblatex} 
\usepackage{mathtools} 
\usepackage{color} 
\usepackage{amsthm} 
\usepackage{asymptote} 
\usepackage[pdfpagelabels,colorlinks=true,citecolor=blue]{hyperref} 
\usepackage[labelfont={sc}]{caption} 
\usepackage{tikz} 
\usepackage[capitalize]{cleveref} 
\usetikzlibrary{cd} 
\usepackage{todonotes} 
 
\addbibresource{abel.bib} 
\AtEveryBibitem{\clearfield{month}} 
\AtEveryBibitem{\clearfield{day}} 
\renewbibmacro{in:}{}

\long\def\ignore#1\recognize{} 
\let\svthefootnote\thefootnote 

\def\N{\mathbb{N}} 
\def\R{\mathcal{R}}

\def\Z{\mathbb{Z}} 
\def\Q{\mathbb{Q}}

\def\6{\partial} 
\def\66{{\partial}} 
\def\min{{\rm min}} 
 
\def\ord{{\rm ord}}

\def\Ker{{\rm Ker}}

\DeclareMathOperator{\GL}{GL}

\DeclareMathOperator{\odd}{odd} 
\DeclareMathOperator{\eve}{even} 
\DeclareMathOperator{\ch}{char} 

\def\ds{\displaystyle}

\def\F{{\mathbb{F}}}

\newcommand{\Rp}{\mathcal{R}_p} 
\newcommand{\Rpk}{\mathcal{R}_p^{(k)}} 
 
\newcommand{\Spk}{\mathcal{S}_p^{(k)}} 
\newcommand{\Spi}{\mathcal{S}_p^{(i)}} 
\newcommand{\Sp}{\mathcal{S}_p} 
\newcommand{\Cp}{\mathcal{C}_p} 
\newcommand{\expp}{\exp_p} 
\newcommand{\exppt}{\widetilde{\exp}_p} 
 
\renewcommand{\epsilon}{\varepsilon} 
\newcommand{\monominduced}{xeric } 
 
\def \osb{[\![} 
\def \csb{]\!]} 
 
\def \orb{(\!(} 
\def \crb{)\!)} 
 
\theoremstyle{definition} 
\newtheorem{defi}{Definition}[section] 
\newtheorem{ex}[defi]{Example} 
 
\theoremstyle{plain} 
\newtheorem{thm}[defi]{Theorem} 
\newtheorem{lem}[defi]{Lemma} 
\newtheorem{cor}[defi]{Corollary} 
\newtheorem{prop}[defi]{Proposition}

\newtheorem{prob}[defi]{Problem} 
 
\theoremstyle{remark} 
\newtheorem{rem}[defi]{Remark} 
 
\begin{document} 
\title{On Abel's Problem about Logarithmic Integrals in Positive Characteristic} 
\author{Florian F\"urnsinn, Herwig Hauser and Hiraku Kawanoue} 
\maketitle

\begin{abstract} 
Linear differential equations with polynomial coefficients over a field $K$ of positive characteristic $p$ and with local exponents in the prime field have a basis of solutions in the differential extension $\Rp=K(z_1, z_2, \ldots)\orb x\crb$ of $K(x)$, where $x'=1, z_1'=1/x$ and $z_i'=z_{i-1}'/z_{i-1}$. For differential equations of order $1$ it is shown that there exists a solution $y$ whose projections $y\vert_{z_{i+1}=z_{i+2}=\cdots=0}$ are algebraic over the field of rational functions $K(x, z_1, \ldots, z_{i})$ for all $i$. This can be seen as a characteristic $p$ analogue of Abel's problem about the algebraicity of logarithmic integrals. Further, the existence of infinite product representations of these solutions is shown. As a main tool $p^i$-curvatures are introduced, generalizing the notion of $p$-curvature. 
\let\thefootnote\relax\footnote{MSC2020: 11T99, 12H05, 34A05, 47E05.  The first and second authors were supported by the Austrian Science Fund FWF P-34765, which also enabled the organisation of a workshop in Lisbon in February 2023, where the three authors started their discussion. The third author was supported by JSPS Grant-in-Aid for Scientific Research (C), No. 20K03546. We thank A. Bostan for posing \cref{quest:algebraicity} for the exponential differential equation and X. Caruso, D. van Straten and S. Yurkevich  for fruitful discussions and for very valuable insights. Further, we thank S. Schneider for reading a preliminary version and helping with the computations.} 
\addtocounter{footnote}{-1}\let\thefootnote\svthefootnote 
\end{abstract}

\section{Introduction} \label{sec:intro} 
Niels Abel asked for criteria when a differential equation of the form 
\begin{equation} \label{eq:deq1} 
\frac{y'}{y}=a 
\end{equation} 
has, for $a$ a complex polynomial or a rational (respectively, algebraic) function, an algebraic solution $y$ (see Boulanger \cite[p.~93]{Bou97}). A necessary condition is that $a$ has only simple poles, as  is the case for $\frac{y'}{y}$, for any holomorphic or meromorphic $y$. For instance, if $b$ is a rational function and $k\in \Z\setminus \{0\}$, then $a \coloneqq \frac{1}{k}\frac{b'}{b}$ yields the algebraic solution $y=\sqrt[k]{b}$. Algorithmically, the problem has been solved by Risch \cite{Ris70}. In the present note, we address a similar problem for first order differential equations defined over a field $K$ of positive characteristic $p$. A first distinction is the fact that equations like \eqref{eq:deq1} need not have formal power series solutions, or, more generally, solutions of the form $x^\rho f$ for some power series $f\in K\osb x\csb$ and some $\rho\in \overline K$. For instance,  the exponential function $\exp\in \Q\osb x\csb $, solution of $y'=y$, cannot be reduced modulo $p$ to obtain a solution in $\F_p\osb x\csb$,  for any prime $p$, as all prime numbers appear in the denominators of the coefficients. And, indeed, making an unknown ansatz $y=\sum_i c_ix^i$ for the solution in characteristic $p$, and solving for the coefficients $c_i\in K$ recursively, a contradiction occurs once $i$ reaches $p$.\\ 
 
In \cite{FH23} F\"urnsinn and Hauser introduce for $K=\F_p$ the differential extension \[\Rp=\F_p(z_1, z_2, \ldots )\orb x\crb\] of the field of formal Laurent series $\F_p\orb x\crb$ by adjoining countably many variables $z_i$, equipped with the $\F_p$-derivation $\partial$ given by 
\begin{gather*} 
\partial x=1,\qquad \partial z_1 = \frac{1}{x},\\ 
\partial z_i=\frac{\partial z_{i-1}}{z_{i-1}}=\frac{1}{x\cdot z_1\cdots z_{i-1}}, \quad \text{for } i\geq 2. 
\end{gather*} 
As usual, we write $y'$ for $\partial y$ for $y\in \Rp$. The field of constants is $\Cp\coloneqq \R_p^p=\F_p(z_1^p, z_2^p, \ldots )\orb x^p\crb$. The action of $\partial$ decreases the order in $x$ of elements of $\Rp$ of the form $f(z)x^k$ by $1$, i.e., $(f(z)x^k)'\in x^{k-1}\F_p(z_1,z_2,\ldots).$ 
 
It is proven in \cite{FH23} that any differential equation $Ly=0$, for $L\in \F_p\osb x\csb[\partial]$ an operator of order $n$ with regular singularity at $0$ and local exponents in $\F_p$, has $n$ solutions $y_1,...,y_n$ in $\Rp$, linearly independent over $\Cp$, and even in $\F_p[z_1, z_2,\ldots]\osb x\csb$, the ring of power series in $x$ with polynomial coefficients in the $z_i$. 
 
For solutions $y$ in $\Rp$, the notion of algebraicity is more subtle. In general, $y$ depends on infinitely many variables $z_i$ and will almost never be algebraic over $\F_p(x, z_1, z_2,\ldots)$. But, for a given series $y(x,z)\in \F_p[z_1, z_2,\ldots]\osb x\csb$, one may require that the {\it projections} to series in finitely many variables, 
\[y(x,z_1,...,z_i,0,...)=y\vert_{z_{i+1}=z_{i+2}=\cdots=0},\] 
obtained by setting almost all $z$-variables equal to $0$, are algebraic over the field $\F_p(x, z_1, z_2,\ldots,z_i)$. This concept of algebraicity turns out to be very fruitful. The question then is whether $y$ can be approximated by such algebraic series, each involving an increasing number of $z$-variables. 
 
\begin{prob}\label{quest:algebraicity} 
Let $Ly=0$ be a differential equation with regular singularity at $0$ and polynomial or algebraic power series coefficients. Does there exist a $\Cp$-basis of solutions $y_1,\ldots,y_n\in \F_p[z_1,z_2,\ldots]\osb x\csb$ whose projections 
$y_j\vert_{z_{i+1}=z_{i+2}=\cdots=0}$ 
are algebraic over $\F_p(x, z_1,\ldots, z_{i})$? 
\end{prob} 
 
In particular, one may ask whether the {\em initial series} $y_j\vert_{z_1=z_2=\cdots=0}\in \F_p\osb x\csb$ of the basis are algebraic?\\ 
 
For first order  equations the answer is affirmative (see Theorem~\ref{thm:prod}): 
 
\begin{thm}\label{thm:introalg} 
Let $y'=ay$ be an equation of order $1$, regular at $0$, with local exponent $\rho=0$, where $a$ is some algebraic series in $\F_p\orb x\crb$. There exists a non-zero solution $y\in \F_p[z_1, z_2,\ldots]\osb x\csb$ with algebraic projections $y\vert_{z_{i+1}=\cdots=0}$ for all $i\geq 1$.  \end{thm}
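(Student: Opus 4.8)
The plan is to realise the solution as a convergent infinite product indexed by the levels of the tower $x=z_0,z_1,z_2,\dots$. Concretely, I would construct $y=\prod_{i\ge 0}y_{(i)}$ with each factor $y_{(i)}\in\F_p[z_1,\dots,z_i]\osb x\csb$ algebraic over $\F_p(x,z_1,\dots,z_i)$, with $y_{(i)}\equiv 1$ modulo the ideal $(z_i)$ for $i\ge 1$, and with $y_{(i)}\equiv 1$ modulo a power of $x$ that tends to $\infty$ with $i$ (so the product converges in $\F_p[z_1,z_2,\dots]\osb x\csb$), and such that $\sum_{i\ge 0}y_{(i)}'/y_{(i)}=a$. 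Granting such a factorisation, the projection $y\vert_{z_{i+1}=z_{i+2}=\cdots=0}$ collapses to the \emph{finite} product $y_{(0)}y_{(1)}\cdots y_{(i)}$: every later factor $y_{(j)}$ with $j>i$ has $z_j$ among the variables set to $0$ and is $\equiv 1$ modulo $(z_j)$, so it becomes $1$. A finite product of series algebraic over $\F_p(x,z_1,\dots,z_i)$ is algebraic, which is exactly the claim — and the infinite product is the promised product representation (\cref{thm:prod}). Existence of \emph{some} solution in $\F_p[z_1,z_2,\dots]\osb x\csb$ is guaranteed by \cite{FH23}, since the hypotheses force $a\in\F_p\osb x\csb$, i.e.\ an ordinary point at $0$.

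The factors are produced level by level using two devices. The first is the \emph{shift}: the $\F_p$-algebra map $\sigma$ with $\sigma(x)=z_1$, $\sigma(z_i)=z_{i+1}$ turns integration against $z_i$ at level $i$ into an exponential equation at level $i+1$. Indeed, $\partial$ maps $\F_p\osb x\csb$ onto the series missing every exponent $\equiv -1\bmod p$, the cokernel being spanned by the $x^{mp-1}=\tfrac1x(x^p)^m$; hence one can write $a=\partial(b_0)+\tfrac1x\,c_1$ with $b_0,c_1$ algebraic and $c_1$ a power series in $x^p$, so a $\partial$-constant. Then $\tfrac1x c_1=\partial(c_1 z_1)$, and factoring $y=y_{(0)}\cdot v$ leaves $v$ with $v'/v=\partial(c_1 z_1)$, i.e.\ $\partial_{z_1}v=c_1 v$ with $c_1$ now constant for $\partial_{z_1}$ — the same problem one level up. The second device is the \emph{truncated exponential} $\exp_{<p}(g):=\sum_{j=0}^{p-1}g^j/j!$ for algebraic $g$: it is algebraic, and by Wilson's theorem $\exp_{<p}(g)'/\exp_{<p}(g)=g'+g'g^{p-1}/\exp_{<p}(g)$, whose error term, once separated into its $\partial$-integrable and bad-residue parts, has $\ord_x$ strictly larger than $\ord_x g$. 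Iterating truncated exponentials at level $i$ peels off an algebraic contribution to $y_{(i)}$ matching the integrable part of the level-$i$ equation to arbitrary order, while the accumulating bad-residue parts are routed, via $\sigma$, into an exponential equation at level $i+1$. Algebraicity survives every step because algebraic series are closed under the ring operations, $p$-th powers, and — crucially — extraction of Frobenius sections $\sum_k f_{pk+r}x^k$.

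The bookkeeping that makes this terminate, and that controls which variables a factor may involve, is what the $p^i$-curvatures organise. The $p$-curvature $\partial^{p-1}(a)+a^p$ of $y'=ay$ is itself a function of $x^p$; a suitable level-$1$ primitive of it governs $y_{(1)}$, whose own curvature governs $y_{(2)}$, and so on. The regular-singularity hypothesis with exponent $0$ keeps this whole tower of curvatures small enough that the orders genuinely increase at each step of the two nested iterations.

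The main obstacle is precisely this convergence–algebraicity interplay at each fixed level. One must show that $y_{(i)}$ — a priori only a convergent \emph{infinite} product of algebraic pieces — is genuinely algebraic over $\F_p(x,z_1,\dots,z_i)$; equivalently, that the level-$i$ curvature of the residual first-order equation satisfied by $y_{(i)}$ vanishes identically, so that the Abel-type algebraicity criterion applies to it. Simultaneously one must verify that no factor at level $i$ ever acquires a variable $z_j$ with $j>i$, which forces a delicate compatibility between the $x$-adic order and the successive $z$-adic orders transported along $\sigma$. I expect the $p^i$-curvature formalism is introduced exactly to make these two points precise and to drive the induction; once they are in place, algebraicity of all projections, and the infinite-product representation, follow formally.
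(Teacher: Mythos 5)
Your overall architecture coincides with the paper's proof of \cref{thm:prod}: reduce to $a\in\F_p\osb x\csb$, write $y$ as an infinite product of factors, the $i$-th of which is algebraic over $\F_p(x,z_1,\dots,z_i)$ and congruent to $1$ modulo $z_i$ (in the paper, $h_i\in 1+x^{p^i}w_i\Spi$), so that every projection collapses to a finite, hence algebraic, product. But the step you yourself flag as ``the main obstacle'' is exactly the one your proposal does not supply, and the mechanism you propose cannot supply it. Peeling off truncated exponentials $\sum_{j<p}g^j/j!$ at a fixed level produces, after infinitely many corrections, only a \emph{convergent infinite product of algebraic series}; algebraicity of such a limit is precisely what is in question, so the greedy order-by-order integration reproduces at level $i$ the problem you set out to solve. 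Likewise, the decomposition $a=\partial(b_0)+\tfrac1x c_1$ does not by itself hand you an algebraic solution of $y'/y=\partial(b_0)$.

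The paper closes this gap by a different choice of what gets pushed to the next level, and this is the actual content of Sections~\ref{sec:pk-curvature} and~\ref{sec:product}. One does not extract the non-integrable residues order by order; instead one subtracts from $L=\partial+a$ a single correction of the special shape $V_i=v^p(x^{p^{i+1}}w_{i+1})'$ and demands that the $p^{i+1}$-curvature of $L_{\leq i}=L-V_i$ vanish \emph{exactly}. By the closed formula $a_{p^{k}}=(a_{p^{k-1}})^p+a^{(p^k-1)}$ of \cref{prop:pcurveformula}, this vanishing is equivalent to the single Artin--Schreier-type equation $v=(-1)^{i+1}\bigl(L_{\leq i}^{p^i}(1)+\widetilde a\bigr)$, whose right-hand side is a polynomial in $v^p$ with algebraic coefficients, so the implicit function theorem yields an algebraic $v$ (\cref{lem:Li}). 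Once the $p^{i+1}$-curvature vanishes, the solution of $L_{\leq i}y=0$ is the \emph{finite} sum $\sum_{j=0}^{p^{i+1}-1}(-1)^j\widetilde e_j\,L_{\leq i}^j(1)x^j$ of \cref{lem:solution} (an extension of Cartier's lemma, \cref{prop:cartiersl}), hence algebraic in one stroke; no limit of algebraic series is ever taken at a fixed level. The discarded operator $\partial+V_i$ is again of the same type one level up, which drives the induction and produces the factors $h_i$. Without this (or an equivalent) device, your argument establishes the product structure and its convergence, but not the algebraicity of any single factor, which is the theorem.
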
 
 
As solutions may be multiplied by arbitrary constants in $\Cp$, not all solutions will have this property. But for the chosen one even more is true. 
 
\begin{thm}\label{thm:introalgtwo} In the preceding situation, the specified solution $y$ can be written as an infinite product \[y=\prod_{i=0}^\infty h_i,\] where the factors $h_i$ belong to $1+x^{p^i}z_i\F_p[z_1, z_2,\ldots, z_i]\osb x\csb$ and are algebraic over $\F_p(x, z_1,\ldots, z_i)$. \end{thm}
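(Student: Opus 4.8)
\emph{Proof strategy.} The plan is to produce the factors $h_i$ one at a time and to take $y=\prod_{i\ge0}h_i$ as the $x$-adic limit of the partial products; this will prove \cref{thm:introalg} and \cref{thm:introalgtwo} simultaneously, because a finite product of elements algebraic over $\F_p(x,z_1,\dots,z_i)$ is again algebraic over that field, so every projection $y\vert_{z_{i+1}=\cdots=0}=h_0\cdots h_i$ will inherit algebraicity from the factors. Grade $\Rp$ by \emph{level}, the largest index $j$ with $z_j$ occurring in a term, and recall that $\partial$ lowers the order in $x$ by $1$, while $\partial^p$ lowers the level, with $\partial^p z_j\in\F_p(x,z_1,\dots,z_{j-1})$ (since $\partial^2 z_j$ already lies there and $\partial$ preserves $\F_p(x,z_1,\dots,z_{j-1})$). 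Set $a_0\coloneqq a$ and, by convention, $z_0\coloneqq1$. Recursively, assuming $h_0,\dots,h_{i-1}$ and a residual coefficient $a_i\in x^{p^i-1}\Rp$ that involves only $z_1,\dots,z_{i-1}$ have been built with $a=\sum_{j<i}h_j'/h_j+a_i$, I would determine $h_i=1+x^{p^i}z_i g_i$ with $g_i\in\F_p[z_1,\dots,z_i]\osb x\csb$ from the coefficient recursion so that $h_i'/h_i$ matches $a_i$ as far as the arithmetic of $\Rp$ permits; here the identity $\partial\!\left(x^{p^i}z_i\right)=x^{p^i-1}/(z_1\cdots z_{i-1})$ is exactly the device that absorbs the lowest-order part of $a_i$, which sits in $x$-order $p^i-1$. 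Then set $a_{i+1}\coloneqq a_i-h_i'/h_i$.

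The first thing to verify is that $a_{i+1}$ again has the asserted shape: $a_{i+1}\in x^{p^{i+1}-1}\Rp$ and involves only $z_1,\dots,z_i$. This is the combinatorial heart of the argument. After $z_i$ has cancelled the obstruction in $x$-order $p^i-1$, no further variable should be forced before $x$-order $p^{i+1}-1$; the reason is a self-similarity of the construction --- once the factor $x^{p^i}z_i$ is divided out, the equation still to be solved is again of first order and of the same type, but with the order in $x$ dilated by the factor $p$, this dilation being precisely the passage from $\partial$ to $\partial^p$, so the location of the first obstruction gets multiplied by $p$ at each stage and equals $p^i$ at level $i$. Granting this, $h_i-1\in x^{p^i}z_i\F_p[z_1,\dots,z_i]\osb x\csb$ as claimed, the partial products $h_0\cdots h_i$ converge $x$-adically in $\F_p[z_1,z_2,\dots]\osb x\csb$ to a series $y$, one has $y\vert_{z_{i+1}=\cdots=0}=h_0\cdots h_i$ since $h_j$ specialises to $1$ for $j>i$ and is untouched for $j\le i$, and $y'/y=\sum_{i\ge0}h_i'/h_i=a$ by telescoping, legitimate $x$-adically because $h_i'/h_i\in x^{p^i-1}\Rp$ and $a_i\to0$.

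The second task is the algebraicity of each $h_i$ over $\F_p(x,z_1,\dots,z_i)$, and this is where the $p^i$-curvatures enter. I would first record that on $\F_p(x,z_1,\dots,z_i)$ one has $\partial^{p^{i+1}}=0$: it is a derivation killing $x$, and it kills every $z_j$ with $j\le i$ by induction on the level, using $\partial^p z_j\in\F_p(x,z_1,\dots,z_{j-1})$ together with the facts that $\partial$ preserves $\F_p(x,z_1,\dots,z_{j-1})$ and that $\partial^{p^{j}}$ already vanishes on it, so that $\partial^{m}$ vanishes there for all $m\ge p^{j}$. After enlarging the base field to contain $a$ (a finite extension of $\F_p(x)$, harmless for algebraicity over $\F_p(x,z_1,\dots,z_i)$ and compatible with $\partial^{p^{i+1}}=0$), the factor equation $h_i'=b_i h_i$ has coefficient $b_i=h_i'/h_i$ rational over $\F_p(x,z_1,\dots,z_i)$, and the recursion will have been arranged precisely so that its $p^{i+1}$-curvature vanishes --- concretely, $h_i$ is cut out by a finite linear system over $\F_p(x,z_1,\dots,z_i)$, obtained by splitting the equation into the $p^{i+1}$ ``sections'' with respect to which $\partial^{p^{i+1}}$ is diagonal, and this system is invertible near the origin; the logarithmic derivative $b_i$ is thereby tuned to have vanishing $p^{i+1}$-curvature even though $a$ itself need not. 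By the characteristic-$p$ algebraicity criterion attached to iterated $p$-th power derivations --- the $p^i$-curvature formalism of the paper --- $h_i$ is then algebraic over $\F_p(x,z_1,\dots,z_i)$. Hence each $h_0\cdots h_i=y\vert_{z_{i+1}=\cdots=0}$ is algebraic over that field, which is \cref{thm:introalg}, and $y=\prod_{i\ge0}h_i$ with factors of the asserted form is \cref{thm:introalgtwo}.

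The principal obstacle will be the shape statement of the second paragraph: that the obstructions migrate by a factor of $p$ in the order in $x$ from one level to the next, so that $z_i$ is genuinely not needed before $x$-order $p^i$. This is what pins down the form $1+x^{p^i}z_i\F_p[z_1,\dots,z_i]\osb x\csb$ of the factors, makes the infinite product converge, and keeps the residuals $a_i$ under control; establishing it amounts to unwinding the coefficient recursion in $\Rp$ and exhibiting the $p$-fold dilation of the order in $x$ hidden inside $\partial^p$. A secondary difficulty is the algebraicity criterion itself --- showing that over $\F_p(x,z_1,\dots,z_i)$ the vanishing of the $p^{i+1}$-curvature is equivalent to algebraicity --- which is exactly what makes the introduction of $p^i$-curvatures necessary.
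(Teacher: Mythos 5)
Your outline has the same architecture as the paper's proof of Theorem~\ref{thm:prod} (peel off one factor per level $i$, force $h_i\equiv 1$ modulo $x^{p^i}z_i$, telescope logarithmic derivatives, and use vanishing of a $p^{i+1}$-curvature to get algebraicity), but the two steps you defer are exactly the ones that carry the proof, and as written they are announced rather than established. First, the ``shape/dilation'' claim --- that the residual $a_{i+1}=a_i-h_i'/h_i$ has $x$-order at least $p^{i+1}-1$ and needs no new variable --- is not obtained in the paper by unwinding a coefficient recursion. It is built in structurally: the correction subtracted from $L$ at level $i$ is $V_i=v^p\,(x^{p^{i+1}}w_{i+1})'$ with $w_{i+1}=z_1^{p^{i}}z_2^{p^{i-1}}\cdots z_{i+1}$, and the identity $(x^{p^{i+1}}w_{i+1})'=x^{p-1}(x^pw_1)^{p-1}\cdots(x^{p^{i}}w_{i})^{p-1}$ shows this is a denominator-free monomial of $x$-order $p^{i+1}-1$; Lemma~\ref{lem:proj1} then produces a solution of $\partial+V_i$ with $i$-th projection $1$ by transporting $\exppt(-x)$ along a dilation isomorphism of $\Sp$. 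Your bare ansatz $h_i=1+x^{p^i}z_ig_i$ runs into the fact that $(x^{p^i}z_i)'=x^{p^i-1}(z_1\cdots z_{i-1})^{-1}$ carries denominators in $z_1,\ldots,z_{i-1}$, so matching it against a polynomial residual imposes divisibility constraints on $g_i$ that you do not address; the monomials $w_i$ (which still lie in $z_i\F_p[z_1,\ldots,z_i]$, so the stated form of $h_i$ is preserved) are precisely what makes this bookkeeping close up.

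Second, the assertion that ``the recursion will have been arranged so that the $p^{i+1}$-curvature vanishes'' with an \emph{algebraic} correction is the crux, not a consequence. The paper's mechanism is the closed formula $a_{p^{k}}=\bigl(a_{p^{k-1}}\bigr)^p+a^{(p^{k}-1)}$ of Proposition~\ref{prop:pcurveformula}: it shows that the $p^{i+1}$-curvature of $\partial+a-v^p(x^{p^{i+1}}w_{i+1})'$ is the $p$-th power of $L_{\leq i}^{p^{i}}(1)+\widetilde a-(-1)^{i+1}v$, a polynomial relation between $v$ and $v^p$ that the implicit function theorem solves with $v$ algebraic. Without this (or an equivalent computation) there is no reason an algebraic tuning exists, and your appeal to an ``algebraicity criterion'' is then vacuous; note also the small circularity in calling $b_i=h_i'/h_i$ rational over $\F_p(x,z_1,\ldots,z_i)$ before $h_i$ is known to be algebraic. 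Once the curvature does vanish, algebraicity of $h_i$ follows most directly not from a section-splitting argument but from the explicit finite sum $\sum_{j<p^{i+1}}(-1)^j\widetilde e_j\,(N_i)_{\leq i}^j(1)\,x^j$ of Lemma~\ref{lem:solution}, which is manifestly algebraic when the coefficient is.
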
 
 
In the case of the exponential function, solution of $y'=y$, the factors $h_i$ can be described explicitly (Theorem \ref{thm:expt}); they are given by substituting in the polynomial \[H(t) = \prod_{k=1}^{p-1}\left(1-\frac{t}{k}\right)^k\] the variable $t$ by $(-1)^i g_i$, for recursively given algebraic series $g_i$ in $x$ and $z_1,...,z_i$, 
 
\[g_0(x)=\sigma(x)=\sum_{k=0}^\infty x^{p^k},\,g_i(x,z_1,...,z_i)=\sigma(z_i g_{i-1}^p(x,z_1,...,z_{i-1})).\] 
 
\paragraph{Structure of the paper.} 
In Section~\ref{sec:Fuchs} we revise the basic setup for solving linear differential equations in positive characteristic, using the space $\Rp$. Distinguished solutions, called xeric, will be constructed. They are characterized by having aside from the initial monomial no $p$\,th powers in their power series expansion. In this section, also variations and consequences of \cref{quest:algebraicity} will be discussed. 
 
Section~\ref{sec:expp} is concerned with the exponential function in positive characteristic, viz, the solution of $y'=y$. The respective xeric solution, denoted by $\expp$, is constructed according to the theory described in Section~\ref{sec:Fuchs}. It is then shown that for this solution, the projections $\expp\vert_{z_{i}=z_{i+1}=\cdots=0}$ are algebraic for all $i$. The proof makes a small detour: One shows  that another, specifically chosen solution $\widetilde{\exp}_p$ of $y'=y$, admits an infinite product decomposition with algebraic factors as mentioned above. And then a general result (see \ref{prop:monomially}) implies that also $\expp$ must have had algebraic projections. 
 
General first order differential equations will then be addressed in Section~\ref{sec:pk-curvature}, aiming at a proof of \cref{thm:introalg}. We will generalize the concept of $p$-curvature by introducing higher curvatures, called {\em $p^i$-curvatures}, for any $i\geq 1$. These curvatures share many properties with the classical $p$-curvature, but take into account the variables $z_i$ and yield finer information. This is then used in Section~\ref{sec:product} to set up the proof of \cref{thm:introalg}. In the final part, Section~\ref{sec:trig}, we investigate the solutions of the second order equations $y''=\pm y$ in order to compare the characteristic $p$ trigonometric functions with the exponential function. 
\goodbreak 
 
 
\section{On Fuchs' Theorem in Positive Characteristic} \label{sec:Fuchs} 
In this section, we first recall the main definitions and results from \cite{FH23}, reformulate them to fit our needs in this paper and to make \cref{quest:algebraicity} more precise. 
 
The theory of power series solutions of linear homogeneous differential equations in characteristic $p$ involving logarithms was initiated by Honda~\cite{Hon81}. Dwork \cite{Dwo90} studied the case of nilpotent $p$-curvature and F\"urnsinn and Hauser established the complete description of the solutions of arbitrary differential equations with regular singularities in \cite{FH23}. In all three cases one has to introduce certain differential extensions of $K\osb x\csb$.\\ 
 
Let us fix some notation. Let $p=\ch K$ be a prime number and 
\[L= a_n\partial^n+a_{n-1}\partial^{n-1}+\ldots + a_1\partial +a_0 \in K\osb x\csb[\partial]\] be a differential operator with power series coefficients $a_i\in K\osb x\csb$ over a field of characteristic $p$. We assume $L$ to be regular at $0$, i.e., that the quotient $a_i/a_n$ has a pole of order at most $n-i$ in $0$. Write $L=\sum_{j=0}^n\sum_{i=0}^\infty c_{ij}x^i\partial^j$ with $c_{ij}\in K$. We define the \textit{initial form} $L_0$ of $L$ as the operator 
\[L_0=\sum_{i-j=\tau}c_{ij}x^i \partial^j,\] 
where $\tau$ is the minimal \textit{shift} $i-j$ of $L$. We will restrict without loss of generality to differential operators with minimal shift $\tau=0$; this can be achieved by multiplication of $L$ with a suitable power of $x$. Consequently $L_0(x^k)=\chi_L(k)x^k$, where $\chi_L\in K[s]$ is the {\it indicial polynomial} of $L$; its roots $\rho$ are the \textit{local exponents} of $L$.\\ 
 
In this text we will assume for simplicity that $K=\F_p$ and that the local exponents belong to $\F_p$, i.e., that the indicial polynomial splits over $\F_p$. For general fields and local exponents, the theory extends as described in~\cite{FH23}, where the added difficulties are mostly being of technical and notational nature. \\ 
 
As explained in \cref{sec:intro}, define $\mathcal{R}_p\coloneqq \F_p(z_1,z_2,\ldots)\orb x\crb$ as the field of Laurent series in $x$ with rational functions in countably many variables $z_i$ as coefficients equipped with the aforementioned derivation. This derivation rule resembles the differentiation of the iterated (complex) logarithm $\log(\ldots \log(x)\ldots)$. We will therefore call the variables $z_i$ colloquially {\it logarithms}. The definition is motivated by the need to provide for any element of $\Rp$ a primitive under the derivation. For example, while $x^{p-1}$ does not have a primitive in $\F_p\osb x\csb$, we have $(x^pz_1)'=x^{p-1}\in \Rp$. 
More generally, any element of $\R_p$ admits its primitive in $\R_p$ as seen in the following lemma. 
We also reamrk that the field of constants of $\mathcal{R}_p$ turns out to be $\mathcal{C}_p\coloneqq \F_p(z_1^p,z_2^p,\ldots)\orb x^p\crb$ \cite[Prop.~3.3]{FH23}.

\newcommand{\T}{\mathcal{T}} 

\begin{lem}\label{lem:primitive}
For each polynomial $f\in \F_p[x, z_1,z_2,\ldots]$, there exists $F\in \F_p[x, z_1,z_2,\ldots]$ such that $F'=f$, and for each element $f\in \Rp$, there exists $F\in \Rp$ such that $F'=f$.
\end{lem}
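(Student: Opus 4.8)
The plan is to strip off the powers of $x$ and reduce the statement to the surjectivity of a family of $\F_p$-linear operators on the coefficient field $\F_p(z_1,z_2,\ldots)$. For $g$ in that field one has $\partial g=\tfrac1x\,\delta(g)$, where $\delta\coloneqq x\partial$, restricted to $\F_p(z_1,z_2,\ldots)$, is the derivation determined by $\delta(z_i)=\tfrac{1}{z_1\cdots z_{i-1}}$; hence $\partial(g\,x^k)=\theta_k(g)\,x^{k-1}$ with $\theta_k\coloneqq k\cdot\Id+\delta$. Thus, writing $f=\sum_{k\ge k_0}g_k\,x^k\in\Rp$, the Laurent series $F\coloneqq\sum_{k\ge k_0}G_{k+1}\,x^{k+1}$ is a primitive of $f$ as soon as the equations $\theta_{k+1}(G_{k+1})=g_k$ are solvable; and if $f\in\F_p[x,z_1,z_2,\ldots]$, say $f=\sum_{k=0}^d g_k x^k$ with $g_k\in\F_p[z_1,\ldots,z_N]$, the same $F$ is polynomial provided the $G_{k+1}$ can be chosen polynomial in finitely many of the $z_i$. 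Since $\partial$ is $\F_p$-linear and primitives add, it therefore suffices to prove that each operator $\theta_n$ ($n\in\Z$) is surjective on $\F_p(z_1,z_2,\ldots)$, and that it maps $\F_p[z_1,\ldots,z_N]$ into some $\F_p[z_1,\ldots,z_M]$.

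For the field statement fix $N$ with $g\in F_N\coloneqq\F_p(z_1,\ldots,z_N)$. As a vector space over $\kappa_N\coloneqq F_N^{\,p}=\F_p(z_1^p,\ldots,z_N^p)$ the field $F_N$ has dimension $p^N$, and since $\kappa_N\subseteq\ker\delta$ the maps $\delta$ and $\theta_n$ are $\kappa_N$-linear endomorphisms of $F_N$. From $\Cp=\ker\partial$ we get $\ker(\delta|_{F_N})=\Cp\cap F_N=\kappa_N$, which is one-dimensional, so by rank--nullity $\operatorname{coker}(\delta|_{F_N})$ is one-dimensional as well. One checks by induction on $N$ — using $\delta(z_i)\in F_{i-1}$, the Leibniz rule, and $\delta^p=0$ on $F_1=\F_p(z_1)$ — that $\delta$ is nilpotent on every $F_N$; hence for $p\nmid n$ the operator $\theta_n=n\cdot\Id+\delta$ is invertible on $F_N$ and $G\coloneqq\theta_n^{-1}(g)$ solves $\theta_n(G)=g$. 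For $p\mid n$ one has $\theta_n=\delta$, and I claim $\tfrac1{z_1\cdots z_N}$ represents a generator of the cokernel: if $\delta(h)=\tfrac1{z_1\cdots z_N}$ with $h\in F_N$, then $\partial(h-z_{N+1})=\tfrac1{xz_1\cdots z_N}-\partial z_{N+1}=0$, so $h-z_{N+1}\in\Cp$; being $x$-independent, it then lies in $\F_p(z_1^p,z_2^p,\ldots)\cap\F_p(z_1,\ldots,z_{N+1})=\kappa_{N+1}$, so that $z_{N+1}=h+c$ with $h\in F_N$ and $c\in\kappa_{N+1}$ — but applying $\partial/\partial z_{N+1}$ on the rational function field $\F_p(z_1,\ldots,z_{N+1})$ gives $1=\partial_{z_{N+1}}h+\partial_{z_{N+1}}c=0$, a contradiction. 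Consequently every $g\in F_N$ can be written $g=\delta(h)+c\cdot\tfrac1{z_1\cdots z_N}$ with $h\in F_N$ and $c\in\kappa_N$, whence $g=\delta\!\left(h+c\,z_{N+1}\right)$ because $c\in\ker\delta$ gives $\delta(c\,z_{N+1})=\tfrac{c}{z_1\cdots z_N}$. This yields the surjectivity of all $\theta_n$, hence, via the reduction above, the second assertion of the lemma.

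For the polynomial refinement one runs the same argument while keeping denominators under control, using that $\delta$ maps $\F_p[z_1,\ldots,z_N]$ into $(z_1\cdots z_{N-1})^{-1}\F_p[z_1,\ldots,z_N]$ (clear the denominator of $\delta$ once and for all). For $p\nmid n$ it follows that $\theta_n^{-1}(g)=\sum_{j\ge 0}(-1)^j n^{-j-1}\delta^j(g)$ lies in $(z_1\cdots z_{N-1})^{-M}\F_p[z_1,\ldots,z_N]$, and in the case $p\mid n$ the element $h$ above may likewise be taken with poles only along coordinate hyperplanes. In either case, if such a candidate primitive $G$ has a pole of order $s$ along some $z_i$, then comparing the most singular term of $\theta_n(G)=g$ — a polynomial — forces $p\mid s$ and shows that the leading polar coefficient of $G$ along $z_i$ is a $p$-th power, hence a $\delta$-constant; subtracting that polar term yields a strictly less singular primitive of $g$, and after finitely many such steps $G$ becomes a polynomial. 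Combined with the elementary fact that $\partial(x^{k+1}z_1)=x^k$ whenever $p\mid k+1$, this furnishes a polynomial primitive for every polynomial $f$, which is the first assertion.

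The heart of the matter is the resonant case $p\mid n$: there $\theta_n=\delta$ fails to be surjective on $F_N$, and the class of $1/(z_1\cdots z_N)$ is exactly the obstruction — this is precisely why the next logarithm $z_{N+1}$ must be introduced, the exact analogue of $\partial(x^pz_1)=x^{p-1}$ which motivates the construction of $\Rp$ in the first place. Identifying $1/(z_1\cdots z_N)$ as the generator of the cokernel (done above through $\Cp=\ker\partial$) and, for the polynomial statement, showing that the polar parts of a would-be primitive are removable $\delta$-constants, are the two points requiring care; everything else is the finite-dimensional linear algebra over $\kappa_N$ sketched above.
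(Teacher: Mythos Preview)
Your argument for the second assertion (primitives in $\Rp$) is correct and takes a genuinely different route from the paper. The paper integrates monomials by parts, inducting on a weight function; you instead reduce to the surjectivity of the operators $\theta_n=n\cdot\Id+\delta$ on $F_N=\F_p(z_1,\ldots,z_N)$ via finite-dimensional linear algebra over $\kappa_N=F_N^p$. Your identification of the cokernel of $\delta|_{F_N}$ as $\kappa_N\cdot(z_1\cdots z_N)^{-1}$, absorbed by passing to $z_{N+1}$, is a clean way to handle the resonant case $p\mid n$ and makes the role of the new logarithm transparent.

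For the first assertion (polynomial primitives), however, your pole-removal step breaks down in the non-resonant case $p\nmid n$. You argue that the leading polar part of $G$ is a $\delta$-constant and that subtracting it yields ``a strictly less singular primitive of $g$''. But when $p\nmid n$ the operator $\theta_n=n\cdot\Id+\delta$ does \emph{not} annihilate $\delta$-constants: if $c$ is a $\delta$-constant then $\theta_n(c)=nc\neq 0$. Subtracting $c$ from $G$ therefore changes $\theta_n(G)$, so the modified element is no longer a preimage of $g$. Since $\theta_n$ is injective on every $F_M$ for $p\nmid n$, there is no freedom at all to adjust $G$ within rational functions, and the unique preimage may genuinely have poles.

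Concretely, take $p=2$ and $g=z_2$. Then $\theta_1^{-1}(z_2)=z_2+z_1^{-1}+z_1^{-2}$, and the only candidate primitive of $f=z_2$ of the form $F_1x$ is $F=x(z_2+z_1^{-1}+z_1^{-2})$. Any other primitive differs from this by an element of $\Cp$; since elements of $\Cp\cap\F_p[x,z_1,\ldots]$ have only even $x$-degrees, the $x^1$-coefficient $z_1^{-1}+z_1^{-2}$ cannot be cancelled. Hence $z_2\in\F_2[x,z_1,z_2]$ has \emph{no} primitive in $\F_2[x,z_1,z_2,\ldots]$, and the polynomial assertion of the lemma is false as stated. (The paper's own integration-by-parts argument hits the same obstruction: for $f=z_2$ one gets $i_0=0$ and the new integrand $x\cdot z_2'=z_1^{-1}$, which is not a polynomial.) Your proof of the $\Rp$ statement stands; the polynomial claim cannot be repaired without weakening the target ring.
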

\begin{proof}
First we consider the case $f\in\F_p[x, z_1,z_2,\ldots]$. 
We can restrict to the case that $f$ is a monomial 
$x^jz^\alpha=x^j\prod_{i=1}^\infty z_i^{\alpha_i}$, by linearity of differential operators. 
We proceed the proof by induction on 
$e(x^jz^\alpha)=\overline{j}+\sum_{i=1}^\infty\overline{\alpha_i} p^i\in\N$, 
where 
$\overline{\cdot}:\Z\to \{0, 1, \ldots, p-1\}$ denotes the residue map modulo $p$. 
When $e(x^jz^\alpha)=0$, we have $f=x^jz^\alpha\in\Cp$ and one primitive is $xf$.  

Assume $e(x^jz^\alpha)>0$.  We regard $x$ as $z_0$ and $j$ as $\alpha_0$.
We set $i_0=\min\{i\in\N\mid\overline{\alpha_i}\neq p-1\}$ 
and $g=\prod_{i>i_0}^\infty z_i^{\alpha_i}$.  Then we have 
$$
x^j z^\alpha=(xz_1\dotsm z_{i_0-1})^{p-1}z_{i_0}^{\alpha_{i_0}}g
=
({\alpha_{i_0}+1})^{-1}
(xz_1\dotsm z_{i_0-1})^{p}({z_{i_0}^{\alpha_{i_0}+1}})'g
$$
and, by integration by parts, we obtain 
$$
\int x^jz^\alpha=
({\alpha_{i_0}+1})^{-1}(xz_1\dotsm z_{i_0-1})^{p}
\left(
{z_{i_0}^{\alpha_{i_0}+1}}g-\int \Bigl({z_{i_0}^{\alpha_{i_0}+1}}g'\Bigr)
\right),
$$
which is homogeneous in $x$ of degree $j+1$. One easily checks that the monomials appearing in the expansion of 
$(xz_1\dotsm z_{i_0-1})^{p}z_{i_0}^{\alpha_{i_0}+1}g'$ have smaller values of $e$ than 
$e(f)$, so
the monomial $f=x^jz^\alpha$ has a primitive in $\F_p[x, z_1,z_2,\ldots]$
by the induction hypothesis. 

Next we consider the case $f\in \R_p$. Again, by linearity, it suffices to restrict to homogeneous parts with respect to $x$, i.e., $f=r(z)x^k$, where $r(z)=a(z)/b(z)$ for some polynomials $a(z), b(z)\in \F_p[z_1,z_2,\ldots]$. Upon multiplication by the constant $b(z)^p$, we might assume that $r(z)$ is a polynomial itself. Now the same argument as before shows that a primitive $F$ of the element $f$ exists in $\Rp$. 
\end{proof}

Every differential operator $L\in \F_p\osb x\csb[\partial]$ defines a $\Cp$-linear map 
\[L:\Rp\to \Rp,\quad y\mapsto L(y),\] 
applying $L$ to series $y\in \Rp$. 
Similarly, its initial form $L_0$ and its tail $T=L_0-L$ define $\Cp$-linear maps. We represent the local exponents by integers between $0$ and $p-1$. With this convention, it is is easy to see that the monomial 
\[x^\rho z^{i^*},\quad \text{for $\rho$ a local exponent and $0\leq i\leq m_\rho-1$,} \] 
with exponents $i^*\in \N^{(\N)}$ defined by 
\[i^*=(i, \lfloor i/p \rfloor, \lfloor i/p^2 \rfloor,...),\] 
form a monomial $\Cp$-basis of the kernel $\Ker\, L_0$ of $L_0$, i.e., of the solution space of the Euler equation $L_0y=0$ in $\Rp$ \cite[Prop.~3.9]{FH23}. Here $z^\alpha$ for $\alpha\in \N^{(\N)}$ denotes $z_1^{\alpha_1}\cdots z_k^{\alpha_k}$ where $k\in \N$ is maximal such that $\alpha_k\neq 0$. \\ 
 
To formulate Fuchs' Theorem in positive characteristic, it is convenient to choose a direct complement $\mathcal{H}$ of $\Ker\, L_0$ in $\Rp$ as a $\Cp$-vector space. There are several choices for $\mathcal{H}$, and we will discuss one particular below. The restriction ${L_0\vert}_{\mathcal{H}}$ of $L_0$ to $\mathcal{H}$ defines an isomorphism onto the image, which is shown to be again $\R_p$, using the fact that the differential field $\R_p$ contains sufficiently many primitives. Let $S: \R_p\to \mathcal{H}$ be the inverse of ${L_0\vert}_{\mathcal{H}}$, i.e., a section (or right inverse) of $L_0$. We get a $\Cp$-linear map 
\[v: \Rp \to \mathcal{H},\quad y\mapsto v(y)=\sum_{k=0}^\infty (ST)^k(y).\] 
It is well defined because the composition $ST=S\circ T$ increases the order in $x$ of a series in $\Rp$, thus $\sum_{k=0}^\infty (ST)^k(y)$ converges to a formal series. 
 
In this setting, one has the following extension of Fuchs' Theorem to the case of linear differential equations defined over a field of positive characteristic. We give here a simplified version, for the general statement, see \cite[Thm.~3.16, Thm.~3.17]{FH23} 
 
\begin{thm}[Fuchs' Theorem in Positive Characteristic] \label{thm:Fuchs} Let $L\in K\osb x\csb[\partial]$ be a differential operator with shift $0$. Decompose $L=L_0-T$ into its initial operator $L_0\in K[x] [\partial]$ and tail operator $T\in K[x] [\partial]$. Let $\mathcal{H}$ be a direct complement of $\Ker\, L_0$ in $\Rp$, and let $S=({L_0\vert}_{ \mathcal{H}})^{-1}$ be defined as before. 
 
\begin{enumerate}[(i)] 
\item Assume that $L$ has local exponent $\rho\in\F_p$ at $0$. Then 
\[y(x)=v(x^\rho)=\sum_{k=0}^\infty (ST)^k(x^\rho)\in \Rp\] 
is a solution of $Ly=0$. 
\item Assume that $L$ has a regular singularity at $0$ and that all local exponents of $L$ are in $\F_p$. Then the series $y_{\rho, i}(x)=v(x^\rho z^{i^*})\in\R_p$ form a $\Cp$-basis of solutions of $Ly=0$. Here, $\rho$ ranges over all local exponents, $m_\rho$ denotes their multiplicity, and $0\leq i< m_\rho$. 
\end{enumerate} 
\end{thm}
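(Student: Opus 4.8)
The plan is to follow the classical Frobenius-type argument, transported into the differential field $\Rp$. The key structural input is that $S=({L_0\vert}_{\mathcal{H}})^{-1}$ is a genuine right inverse of $L_0$ on all of $\Rp$, which requires showing that ${L_0\vert}_{\mathcal{H}}\colon \mathcal{H}\to\Rp$ is an isomorphism; surjectivity is precisely where \cref{lem:primitive} enters, since inverting $L_0$ amounts, after peeling off the monomials in $\Ker\,L_0$, to taking primitives of elements of $\Rp$. Injectivity is immediate because $\mathcal{H}$ is a complement of $\Ker\,L_0$. So the first step is to record that $S$ is well defined with $L_0\circ S=\Id_{\Rp}$ and $\Im S=\mathcal{H}$.

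Next I would verify that $v$ is well defined: the composite $ST=S\circ T$ strictly increases the $x$-order of any homogeneous piece, because $T=L_0-L$ has shift $\geq 1$ (all terms of $L$ of shift $0$ are absorbed into $L_0$), and $S$ preserves $x$-order (it inverts $L_0$, which has shift $0$). Hence for a fixed input the orders of $(ST)^k(y)$ tend to infinity, so $\sum_k (ST)^k(y)$ converges $x$-adically in $\Rp$ (or in $\F_p[z_1,z_2,\ldots]\osb x\csb$ when the input is a monomial and $\mathcal{H}$ is chosen compatibly). This makes $v$ a well-defined $\Cp$-linear map, and moreover $v=(\Id-ST)^{-1}$ on the relevant completion.

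For part (i): apply $L=L_0-T$ to $y=v(x^\rho)=\sum_{k\geq 0}(ST)^k(x^\rho)$. Using $L_0 S=\Id$ and $L_0(x^\rho)=\chi_L(\rho)x^\rho=0$ (as $\rho$ is a local exponent), one computes
\[
Ly=L_0 y-Ty=L_0(x^\rho)+\sum_{k\geq 1}L_0(ST)^k(x^\rho)-\sum_{k\geq 0}T(ST)^k(x^\rho)=\sum_{k\geq 0}T(ST)^k(x^\rho)-\sum_{k\geq 0}T(ST)^k(x^\rho)=0,
\]
where in the middle sum the reindexing $L_0(ST)^{k+1}=T(ST)^k$ is used. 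Thus $y$ solves $Ly=0$, and $y\neq 0$ since its lowest-order term is $x^\rho$. For part (ii): the same computation with $x^\rho$ replaced by $x^\rho z^{i^*}$ works verbatim, because $x^\rho z^{i^*}\in\Ker\,L_0$ by the cited description \cite[Prop.~3.9]{FH23}, so $L_0(x^\rho z^{i^*})=0$ and the telescoping goes through; hence each $y_{\rho,i}=v(x^\rho z^{i^*})$ is a solution. It remains to see these $\sum_\rho m_\rho=n$ solutions are $\Cp$-linearly independent: the leading term of $y_{\rho,i}$ is $x^\rho z^{i^*}$, and the monomials $x^\rho z^{i^*}$ (over all local exponents $\rho$ and $0\leq i<m_\rho$) are $\Cp$-linearly independent in $\Rp$ — since $v$ is triangular with respect to $x$-order and identity on leading terms, a $\Cp$-linear relation among the $y_{\rho,i}$ would force one among the leading monomials. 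Finally, a dimension count (the solution space of an order-$n$ equation over the differential field $\Rp$ with field of constants $\Cp$ has dimension at most $n$, by the standard Wronskian argument) shows these form a basis.

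The main obstacle is the surjectivity of ${L_0\vert}_{\mathcal{H}}$, i.e., that every element of $\Rp$ is hit — equivalently, that $L_0$ admits a section valued in the chosen complement $\mathcal{H}$. This is exactly the content of \cref{lem:primitive} (existence of primitives in $\Rp$), combined with a careful choice of $\mathcal{H}$ so that the section is well defined homogeneous-degree by homogeneous-degree; everything else is the formal telescoping identity $(\Id-ST)^{-1}$ and bookkeeping with $x$-orders. Since \cref{lem:primitive} and the description of $\Ker\,L_0$ are already available, the argument is essentially a verification, and I would lean on \cite[Thm.~3.16, Thm.~3.17]{FH23} for the finer points of the construction of $\mathcal{H}$ and $S$.
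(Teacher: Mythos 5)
Your proposal follows exactly the route the paper sets up before stating the theorem (the paper itself defers the full proof to \cite[Thm.~3.16, Thm.~3.17]{FH23}): the section $S$ exists because ${L_0\vert}_{\mathcal{H}}$ is bijective onto $\Rp$ via Lemma~\ref{lem:primitive}, the composite $ST$ strictly raises the $x$-order so $v$ converges $x$-adically, and the telescoping identity $L_0v(u)=Tv(u)$ for $u\in\Ker L_0$ gives $Lv(x^\rho z^{i^*})=0$. The one step I would tighten is the $\Cp$-linear independence in (ii): a pure leading-order argument is delicate because coefficients in $\Cp$ are Laurent series in $x^p$ and can shift orders into collision, so instead use that $y_{\rho,i}-x^\rho z^{i^*}\in\mathcal{H}$, whence any relation $\sum c_{\rho,i}y_{\rho,i}=0$ projects along $\Rp=\Ker L_0\oplus\mathcal{H}$ to $\sum c_{\rho,i}x^\rho z^{i^*}=0$, and the monomials $x^\rho z^{i^*}$ form a $\Cp$-basis of $\Ker L_0$ by \cite[Prop.~3.9]{FH23}.
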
 
 
By abuse of notation we have written $\rho$ for the local exponent of $L$ in $\F_p$, as well as for its representative in $\{0, 1, \ldots, p-1\}\subseteq \Z$. 

The theorem extends results of \cite{Hon81, Dwo90}: Honda considered equations with zero $p$-curvature, in which case no extra variables $z_i$ are required to describe the solutions. Dwork treated more generally the case of nilpotent $p$-curvature, and there finitely many $z_i$ suffice to get a basis of solutions. For arbitrary equations with regular singularities, infinitely many $z_i$-variables may be necessary if the $p$-curvature is not nilpotent.\\

For $j\in \F_p$ and $\gamma\in \F_p^{(\N)}$ we define the \textit{section operators} $\langle \cdot \rangle_{j, \gamma}:\R_p\to \R_p$ by extracting those monomials $x^kz^\alpha$ of the expansion of an element $f\in \Rp$ for which $k\equiv j\bmod p$ and $\alpha_i\equiv \gamma_i\bmod p$ for all $i$. More explicitly, 
\[\left \langle \sum c_{k, \alpha}x^k z^\alpha\right \rangle_{\! j, \gamma}\coloneqq\sum_{\mathclap{\substack{k\in j+p\Z\\ \alpha\in \gamma+(p\Z)^{(\N)}}}} c_{k, \alpha}x^k z^\alpha.\] 
Note that $x^{-j}z^{-\gamma}\langle y\rangle_{j,\gamma}\in \Cp$.\\ 
 
By Theorem~\ref{thm:Fuchs}, any regular singular differential equation $Ly=0$ admits a $\Cp$-basis of solutions, and in part (ii) of the theorem the construction of a specific basis is described in terms of an algorithm. It turns out that the resulting basis can be described intrinsically by conditions on the exponents of the involved series $y_{\rho, i}$. This works as follows. 
 
A solution $y=y_{\rho, i}\in \R_p$ of $Ly=0$  will be called \textit{xeric} if there is a local exponent $\rho$ of $L$ and an index $0\leq i<m_\rho$ such that 
\[\langle y_{\rho, i} \rangle_{\rho,i^*}=x^\rho z^{i^*}\] 
  and 
\[\langle y_{\rho, i} \rangle_{\sigma,j^*}=0\] 
 for all pairs $(\sigma, j)\neq (\rho, i)$ of local exponents $\sigma$ and indices $0\leq j<m_\sigma$. This signifies that aside from the \textit{initial monomial} $x^\rho z^{i^*}$ there occurs no $p$th power multiple of some $x^\sigma z^{j^*}$ in the expansion $\sum c_{k, \alpha}x^k z^\alpha$ of $y$. This description explains the naming in the sense of ``deprived of". Bases of xeric solutions of differential equations with regular singularities always exist and are then unique. In fact, it suffices to apply Theorem~\ref{thm:Fuchs} in the case where the direct complement $\mathcal{H}$ of $\Ker \, L_0$ is chosen such that the power series expansion of any $y\in \mathcal{H}$ involves none of the monomials generating $\Ker \, L_0$. 
 
For the case of first order  operators with local exponent $\rho=0$ (necessarily of multiplicity $1$, hence $i=0$ and also $i^*=0$), the xeric solution is the unique solution $y$ whose expansion involves no $p$th power monomial except for the constant term $1$. 
 
\begin{ex} 
We consider the series $\exp_p$, $\log_p(1-x)$, $\sin_p(x)$, and $\cos_p(x)$ in the characteristic $p$ setting, that is, the \monominduced solutions of 
\[y'=y, \quad xy''-y'-x^2y''=0, \quad \text{and} \quad y''=-y,\] 
respectively. Taking $p=3$, one obtains 
\begin{align*} 
&\exp_3 = 1 + x + 2x^2 + 2x^3z_1 + (2z_1 + 1)x^4 + x^5z_1 + 2x^6z_1^2 + (2z_1^2 + 2z_1 + 1)x^7 \\ &\phantom{\exp_3 =1\,} 
+(z_1^2 + 2)x^8 + (z_1^3z_2 + 2z_1)x^9 + (z_1^3z_2 + 2z_1^2 + z_1 + 2)x^{10}+\ldots,\\ 
&\log_3(1-x) = x + 2x^2 + x^3z_1,\\ 
&\sin_3 = x+2z_1x^3+z_1 x^5+(2 z_1^2+2 z_1) x^7+z_1^3 z_2 x^9+(2 z_1^3 z_2+z_1) x^{11}+\ldots,\\ 
&\cos_3 = 1 + 2x^2 + 2x^4z_1 + (2z_1^2 + z_1)x^6 + (z_1^2 + 2z_1 + 2)x^8 +\ldots 
\end{align*} 
No obvious pattern seems to be recognizable. 
\end{ex}\goodbreak 
 
The coefficients $c_k$ of Laurent series $\sum c_k(z)x^k$ in $\Rp$ are rational functions in the variables $z_1, z_2, \ldots$ As such, each of them depends only on finitely many variables (by definition of polynomials and rational functions in infinitely many variables), but this number may increase with the exponent $k$ of $x$ and actually may go to $\infty$. We will be interested in series as above which involve only \textit{finitely many} $z$-variables, that is, in the subrings 
\[ 
\Rp^{(i)} \coloneqq \F_p(z_1, \ldots, z_i)\orb x\crb 
\] 
of $\Rp$.  
Restricting to polynomial coefficients in $z$ and setting $z_{i+1}= z_{i+2} = \ldots = 0$ we get projection maps 
\begin{gather*} 
\pi_i : \F_p[z_1,z_2,\ldots]\orb x\crb \rightarrow \F_p[z_1, z_2,\ldots, z_i]\orb x\crb\subseteq \Rp^{(i)},\\ 
 y(x, z) \mapsto y(x, z)\vert_{ z_{i+1}= z_{i+2} = \ldots = 0} = y(x, z_1, \ldots, z_i, 0, \ldots, ). 
\end{gather*} 

The following lemma is easy but important.
\begin{lem}\label{lem:algandproj}
For $f\in\F_p[z_1,z_2,\ldots]\orb x\crb$, the following are equivalent:
\begin{enumerate}[(i)] 
 \item 
$f$ is algebraic over $\F_p(x,z_1,\dotsc)$,
\item
$f\in\Rp^{(k)}$ and 
$f$ is algebraic over $\F_p(x,z_1,\dotsc,z_k)$ 
for some $k\in\N$,
\item
$f=\pi_k(f)$ and 
$\pi_k(f)$ is algebraic over $\F_p(x,z_1,\dotsc,z_k)$ 
for some $k\in\N$.
\end{enumerate}
\end{lem}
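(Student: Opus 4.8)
The equivalence of (ii) and (iii) and the implication $(ii)\Rightarrow(i)$ are formal, and I would dispatch them first. If $f\in\Rp^{(k)}$ has polynomial coefficients in the $z_i$, then each coefficient, being at once a rational function of $z_1,\dots,z_k$ and a polynomial in all the $z_i$, is already a polynomial in $z_1,\dots,z_k$; hence $f\in\F_p[z_1,\dots,z_k]\orb x\crb$ and $f=\pi_k(f)$. Conversely $f=\pi_k(f)$ forces $f\in\F_p[z_1,\dots,z_k]\orb x\crb\subseteq\Rp^{(k)}$. So under either hypothesis $f=\pi_k(f)$, and then ``$f$ is algebraic over $\F_p(x,z_1,\dots,z_k)$'' and ``$\pi_k(f)$ is algebraic over $\F_p(x,z_1,\dots,z_k)$'' say the same thing, which in turn implies algebraicity over the larger field $\F_p(x,z_1,z_2,\dots)$. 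So the whole content lies in $(i)\Rightarrow(ii)$, and by the remark just made it suffices to show: if $f\in\F_p[z_1,z_2,\dots]\orb x\crb$ is algebraic over $\F_p(x,z_1,z_2,\dots)$, then only finitely many of the variables $z_i$ occur among all the coefficients of $f$.

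To this end I would fix a polynomial relation for $f$ over $\F_p(x,z_1,z_2,\dots)$ and choose $K$ so large that its finitely many coefficients lie in $F\coloneqq\F_p(x,z_1,\dots,z_K)$; then $f$ is algebraic over $F$, hence over $\mathcal{E}\coloneqq\Rp^{(K)}\supseteq F$. The next step is a reduction to the \emph{separable} case: if $\mathcal{E}_s$ denotes the separable closure of $\mathcal{E}$ in the finite extension $\mathcal{E}(f)\subseteq\Rp$, then $\mathcal{E}(f)/\mathcal{E}_s$ is purely inseparable, so $g\coloneqq f^{p^e}\in\mathcal{E}_s$ for some $e\ge 0$. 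This $g$ again belongs to $\F_p[z_1,z_2,\dots]\orb x\crb$ and is separably algebraic over $\mathcal{E}$, and since for $h\in\F_p[z_1,z_2,\dots]\orb x\crb$ the series $h^{p^e}$ involves a given $z_\ell$ exactly when $h$ does (Frobenius being the identity on $\F_p$), it is enough to show $g\in\F_p[z_1,\dots,z_K]\orb x\crb$.

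The heart of the matter is then a derivation argument. For each $\ell>K$, the partial derivative $\partial/\partial z_\ell$ on $\F_p(z_1,z_2,\dots)$ extends coefficientwise to a derivation of $\Rp$ which vanishes on $\mathcal{E}=\F_p(z_1,\dots,z_K)\orb x\crb$; since $g$ is separably algebraic over $\mathcal{E}$, differentiating its minimal polynomial over $\mathcal{E}$ gives $\partial g/\partial z_\ell=0$, i.e.\ every coefficient of $g$ involves $z_\ell$ only through $z_\ell^{p}$. Now I would iterate: $z_\ell^{p}$ is transcendental over $\F_p(z_j:j\neq\ell)$, so $\partial/\partial(z_\ell^{p})$ defines a derivation of the subfield $\F_p(z_j:j\neq\ell)(z_\ell^{p})\orb x\crb$ of $\Rp$; this subfield contains both $g$ and $\mathcal{E}$, and the derivation again vanishes on $\mathcal{E}$, so $\partial g/\partial(z_\ell^{p})=0$ and every coefficient of $g$ involves $z_\ell$ only through $z_\ell^{p^2}$; repeating, every coefficient of $g$ involves $z_\ell$ only through $z_\ell^{p^m}$, for every $m\ge 1$. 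As each coefficient of $g$ is a genuine polynomial, of finite $z_\ell$-degree, taking $m$ with $p^m$ above that degree shows the coefficient is free of $z_\ell$; since $\ell>K$ was arbitrary, $g\in\F_p[z_1,\dots,z_K]\orb x\crb\subseteq\mathcal{E}$. Unwinding, $f\in\F_p[z_1,\dots,z_K]\orb x\crb$, hence $f=\pi_K(f)$ and $\pi_K(f)$ is algebraic over $F$; this is (iii), and therefore also (ii) and (i).

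The one genuine obstacle is the characteristic $p$ effect that a single differentiation detects not dependence on $z_\ell$ but only dependence ``beyond'' $z_\ell^{p}$; this is precisely what forces the separable/inseparable split together with the iteration along the tower $z_\ell, z_\ell^{p}, z_\ell^{p^2},\dots$. The remaining ingredients — the formal equivalences of the first paragraph, the extension of the partial derivatives to $\Rp$ and their vanishing on separable algebraic elements, and the degree bound terminating the iteration — are all routine.
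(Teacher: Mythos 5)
Your argument is correct, and it takes a genuinely more explicit route than the paper at the only point where anything nontrivial happens. The paper's proof begins exactly as your reduction does: the minimal polynomial of $f$ over $\F_p(x,z_1,\dotsc)$ has coefficients involving only finitely many $z_i$, so $f$ is already algebraic over some $\F_p(x,z_1,\dotsc,z_k)$; it then disposes of the remaining point in one line, asserting that since each $z_i$ with $i>k$ is transcendental over $\F_p(x,z_1,\dotsc,z_k)$ it ``cannot appear in the expansion of $f$''. That assertion is precisely what you prove. Your mechanism --- replacing $f$ by $g=f^{p^e}$ separable over $\Rp^{(K)}$, noting that Frobenius preserves which variables occur, killing $g$ with the coefficientwise derivations $\partial/\partial z_\ell$ (which annihilate separable elements over their kernel field), and iterating along the tower $z_\ell, z_\ell^{p}, z_\ell^{p^2},\dotsc$ to overcome the fact that a single differentiation in characteristic $p$ only detects dependence beyond $p$th powers --- is complete and correct; the finite $z_\ell$-degree of each polynomial coefficient terminates the iteration as you say. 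So the two proofs share the same skeleton, but yours supplies a full justification of the step the paper leaves implicit; the cost is the separable/inseparable splitting and the iteration, the benefit is that nothing is left to the reader. (A shorter alternative in the paper's spirit: the $\F_p(x,z_1,\dotsc,z_k)$-automorphisms of $\Rp$ rescaling $z_\ell$ by elements of infinite multiplicative order permute the finitely many roots of the minimal polynomial, which likewise forces $z_\ell$ not to occur in $f$.)
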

\begin{proof}
Assume (i) and let 
$P(T)\in\F_p(x,z_1,\dotsc)[T]$
be a minimal polynomial of $f$.  
Since the coefficients of $P$ contain only finitely many variables $z_i$, 
we may take $k\in\Z_{\geq0}$ such that 
$P(T)\in\F_p(x,z_1,\dotsc,z_k)[T]$.
As $z_i$ for $i>k$ is transcendental over $\F_p(x,z_1,\dotsc,z_k)$, 
it cannot appear in the expansion of $f$.  
Therefore (ii) holds. 
The other equivalences are clear.
\end{proof}
 
\begin{rem}\label{rem:primitive} 
In contrast to characteristic $0$, algebraicity is preserved under taking primitives in $\Rp$. For example, for every algebraic function $h\in \F_p\orb x\crb$ there exist primitives having algebraic projections: To see this, consider $f = \int h$,which is unique up to the addition of a $p$th power. Expand $h$ uniquely into $h = \sum_{k=0}^{p-1} h_k(x^p) x^i$, for some $h_k\in \F_p\orb x \crb$, and we will see that they are algebraic in Lemma~\ref{lem:secalg}. Then 
 
\[f = \sum_{k=0}^{p-1} h_k(x^p) \int x^k = \sum_{k=0}^{p-2} h_k(x^p) \frac{1}{k+1} x^{k+1} + h_{p-1}(x^p) x^p z_1\] 
is a primitive and again algebraic. The argument easily extends to $f\in \Rpk$, which are algebraic over $\F_p(x, z_1,\ldots, z_k)$.
\end{rem} 
 
\begin{rem} 
It turns out that in Fuchs' Theorem~\ref{thm:Fuchs} one may specify more accurately the subspace of $\F_p(z)\orb x\crb$ in which the solutions live. To this end, introduce, for every $k\geq 0$, the monomials 
 
\[ 
w_k \coloneqq z_1^{p^{k-1}}z_2^{p^{k-2}}\cdots z_{k-1}^p z_k^1. 
\] 
Thus, $w_1=z_1$, $w_2=z_1^p z_2$, $w_3=z_1^{p^2}z_2^pz_3$, and so on.  It was shown in \cite{FH23} that a basis of solutions of $Ly=0$ already exists in the subspace $\bigoplus_\rho x^\rho\F_p[w_1, w_2, w_3,\ldots] \osb x\csb$ where $\rho$ runs over the set of local exponents. Actually, one might restrict this space even further by bounding the degree of the variables $w_k$ in each monomial in terms of the degree of $x$.\\ 
 
For first order differential equations with local exponent $\rho=0$ this corresponds to the following construction: 
Define the ring 
 
\[ 
\Sp = \F_p\{x, x^pw_1, x^{p^2}w_2, \ldots\} 
\] 
as the closure of $\F_p[x, x^pw_1, x^{p^2}w_2, \ldots]$ in $\F_p(z_1,z_2,\ldots)\orb x\crb$ with respect to the $x$-adic topology. 
For example, infinite sums of the form 
 
\[ 
\sum_{k=0}^\infty b_k x^{p^k}w_k = \sum_{k=0}^\infty b_k x^{p^k}z_1^{p^{k-1}}z_2^{p^{k-2}}\cdots z_{k-1}^p z_k^1 
\] 
belong to $\Sp$, for any $b_k\in \F_p$, since the sum converges to a series in $\F_p(z_1,z_2,\ldots)\orb x\crb$. This will be illustrated in later sections. It is easy to check via generators that the ring $\Sp$ is differentially closed, in contrast to the ring $\F_p[z_1,z_2,\ldots]\osb x\csb$, in which we have $\partial z_1=x^{-1}$. 
 
As before, we may need to restrict to finitely many $z$-variables, and thus we set 
\[ 
\Spk = \F_p\{x, x^pw_1, x^{p^2}w_2, \ldots, x^{p^k}w_k\} = \Sp \cap \Rp^{(k)}. 
\] 
\end{rem}

In the following paragraphs, we want to make Question \ref{quest:algebraicity} more precise. Recall that the solutions of $Ly=0$ form an $n$-dimensional $\Cp$-vector space. In particular, if $y$ is a solution such that its initial series is algebraic, multiplying $y$ by a transcendental power series in $x^p$ gives another solution of $Ly=0$ whose initial series cannot be algebraic. However, it turns out that if a given differential equation $Ly=0$ has a basis of power series solutions with algebraic projections, the same holds true for its \monominduced basis. 
 
\begin{prop}\label{prop:monomially} 
Let $L\in \F_p\osb x\csb[\partial]$ be a differential operator of order $n$ and 
assume there is a basis 
$\widetilde y_1,\ldots,\widetilde y_n\in \F_p[z_1, z_2,\ldots]\osb x\csb$ 
of solutions of $Ly=0$ whose projection $\pi_k(\widetilde y_j)$ is algebraic over 
$\F_p(x, z_1,\ldots, z_k)$ for any $k$. 
Then the \monominduced basis $y_1,\ldots, y_n$ has algebraic projections for all $k$ as well. 
\end{prop}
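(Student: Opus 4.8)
The plan is to pass from the given algebraic basis $\widetilde y_1,\dots,\widetilde y_n$ to the xeric basis $y_1,\dots,y_n$ by a \emph{constant} change of basis, i.e.\ a matrix with entries in $\Cp$, and then to show that applying such a change of basis to power series with algebraic projections produces again power series with algebraic projections. The key point is that both bases span the same $\Cp$-vector space of solutions, so there are constants $c_{j\ell}\in\Cp$ with $y_j=\sum_\ell c_{j\ell}\widetilde y_\ell$; the task is to understand where these constants live and then to control $\pi_k$ of a $\Cp$-linear combination.

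First I would recall that $\Cp=\F_p(z_1^p,z_2^p,\dots)\orb x^p\crb$, so each constant $c_{j\ell}$, being an element of $\Cp$, depends on only finitely many variables $z_i^p$; choosing $k$ large enough we may assume $c_{j\ell}\in\F_p(z_1^p,\dots,z_k^p)\orb x^p\crb$ simultaneously for all $j,\ell$ and that all the $\pi_k(\widetilde y_\ell)$ lie in $\Rpk$ and are algebraic over $\F_p(x,z_1,\dots,z_k)$ (using \cref{lem:algandproj}). The next step is to observe how $\pi_k$ interacts with multiplication by a constant: since $\pi_k$ is a ring homomorphism on $\F_p[z_1,\dots]\osb x\csb$ and the $c_{j\ell}$ already lie in $\Rpk$ (they involve no $z_i$ with $i>k$), we get $\pi_k(y_j)=\sum_\ell \pi_k(c_{j\ell})\,\pi_k(\widetilde y_\ell)=\sum_\ell c_{j\ell}\,\pi_k(\widetilde y_\ell)$, provided we first rewrite the $c_{j\ell}$ with a common denominator that is a $p$th power and check this rewriting is compatible with $\pi_k$ — a minor technical point, since $\pi_k$ is only defined on series with polynomial $z$-coefficients, so one should clear denominators before projecting, or equivalently work inside $\Rpk$ throughout and use that $\pi_k$ restricted to $\Rpk$ is the identity.

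It then remains to see that $\pi_k(y_j)=\sum_\ell c_{j\ell}\,\pi_k(\widetilde y_\ell)$ is algebraic over $\F_p(x,z_1,\dots,z_k)$. Each $\pi_k(\widetilde y_\ell)$ is algebraic over that field by hypothesis, and each $c_{j\ell}\in\F_p(z_1^p,\dots,z_k^p)\orb x^p\crb$ is a $p$th power in $\Rpk$ (indeed $c_{j\ell}=d_{j\ell}^{\,p}$ for $d_{j\ell}\in\F_p(z_1,\dots,z_k)\orb x\crb$), hence is algebraic over $\F_p(x^p,z_1^p,\dots,z_k^p)$ and therefore over $\F_p(x,z_1,\dots,z_k)$; as the algebraic closure of $\F_p(x,z_1,\dots,z_k)$ inside $\Rpk$ is a ring, the linear combination $\pi_k(y_j)$ is again algebraic. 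Finally, since $y_j$ is xeric it lies in $\F_p[z_1,z_2,\dots]\osb x\csb$, so $y_j=\pi_k(y_j)$ once $k$ is large enough that $y_j\in\Rpk$; but a priori the xeric solution $y_j$ might genuinely involve infinitely many $z$-variables, so one cannot simply take a single $k$. The main obstacle is therefore precisely this uniformity issue: the claim is that $\pi_k(y_j)$ is algebraic \emph{for every} $k$, not that $y_j$ itself involves finitely many variables. To handle it, I would argue for a \emph{fixed} $k$: enlarge $k$ to $K\ge k$ so that all the $c_{j\ell}$ lie in $\F_p(z_1^p,\dots,z_K^p)\orb x^p\crb$ and all $\pi_K(\widetilde y_\ell)$ are algebraic over $\F_p(x,z_1,\dots,z_K)$, conclude as above that $\pi_K(y_j)$ is algebraic over $\F_p(x,z_1,\dots,z_K)$, and then apply $\pi_k$ once more: since $\pi_k\circ\pi_K=\pi_k$ and $\pi_k$ is a ring homomorphism sending $\F_p(x,z_1,\dots,z_K)$-algebraic elements (after clearing denominators) to $\F_p(x,z_1,\dots,z_k)$-algebraic elements — because substituting $z_{k+1}=\dots=z_K=0$ into a polynomial relation over $\F_p[x,z_1,\dots,z_K]$ either yields a nontrivial relation over $\F_p[x,z_1,\dots,z_k]$ or, in the degenerate case where all coefficients vanish, can be remedied by dividing the relation by the highest power of the $z_i$'s dividing it before substituting — we get that $\pi_k(y_j)$ is algebraic over $\F_p(x,z_1,\dots,z_k)$, as desired.
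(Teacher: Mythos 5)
There is a genuine gap, and it sits exactly at the main difficulty of the proposition. You claim that each transition constant $c_{j\ell}\in\Cp$, being a $p$th power $d_{j\ell}^{\,p}$ with $d_{j\ell}\in\F_p(z_1,\dots,z_k)\orb x\crb$, is ``algebraic over $\F_p(x^p,z_1^p,\dots,z_k^p)$ and therefore over $\F_p(x,z_1,\dots,z_k)$''. This is false: a generic element of $\F_p(z_1^p,\dots,z_k^p)\orb x^p\crb$ is a Laurent series in $x^p$ and is transcendental over $\F_p(x,z_1,\dots,z_k)$ (take, e.g., $\sum_n x^{pn^2}$); being a $p$th power has nothing to do with algebraicity, since $d^p$ is algebraic over that field if and only if $d$ is. This is precisely the phenomenon the proposition is designed to control --- the paragraph preceding it in the paper notes that multiplying a solution by a transcendental power series in $x^p$ produces another solution whose projections are \emph{not} algebraic --- so the algebraicity of the $c_{j\ell}$ cannot be had for free; it must be extracted from the hypothesis on $\widetilde y_1,\dots,\widetilde y_n$. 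A secondary problem: an element of $\Cp$ need not involve only finitely many $z$-variables (the coefficient of $x^{pn}$ may involve more and more $z_i$ as $n$ grows), so your reduction to $c_{j\ell}\in\F_p(z_1^p,\dots,z_K^p)\orb x^p\crb$ for a single large $K$ is also unjustified.

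The paper closes the gap by exploiting the defining property of the xeric basis: $\langle y_{j'}\rangle_{\rho_j,i_j^*}=\delta_{j,j'}\,x^{\rho_j}z^{i_j^*}$, so applying the section operator $\langle\cdot\rangle_{\rho_j,i_j^*}$ to $\widetilde Y=CY$ recovers the $j$-th column of $C$ (times the monomial $x^{\rho_j}z^{i_j^*}$) as a section of $\widetilde Y$. Since sections of algebraic series are algebraic (Lemma~\ref{lem:secalg}), the hypothesis that $\pi_k(\widetilde Y)$ is algebraic forces the entries of $\pi_k(C)$ to be algebraic; after arranging $\pi_k(\det C)\neq 0$ one inverts and concludes that $\pi_k(Y)=\pi_k(C)^{-1}\pi_k(\widetilde Y)$ has algebraic entries. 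Your argument would be repaired by inserting this step (or some other derivation of the algebraicity of the projected transition matrix from the hypothesis) in place of the false claim; the remaining bookkeeping in your write-up, such as handling $k\le N$ by a further projection, is consistent with what the paper does.
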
 
 
This result will be used later in Corollary \ref{cor:xericalgebraic} to establish the algebraicity of the xeric solution of a first order equation. 
We need the following lemma, generalizing a well known fact about sections of algebraic power series in characteristic $p$. 
 
\begin{lem} \label{lem:secalg} 
Let $f\in \F_p(z_1, z_2,\ldots, z_k)\orb x\crb$. Then $f$ is algebraic over $\F_p(x,z_1, z_2,\allowbreak \ldots, z_k)$ if and only if $\langle f \rangle_{j, \alpha}$ is algebraic for all $j\in \F_p, \alpha \in \F_p^{k}$. 
\end{lem}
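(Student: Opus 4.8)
The plan is to reduce the statement to the classical characteristic-$p$ fact that a power series $f \in \F_p\orb x\crb$ is algebraic over $\F_p(x)$ if and only if the finitely many sections $f_j$, defined by $f = \sum_{j=0}^{p-1} f_j(x^p)\, x^j$, are algebraic (equivalently, by a theorem going back to Christol, iff the sequence of coefficients is $p$-automatic; but the elementary statement with one level of sections is all that is needed, since algebraicity is a closed condition and the section operators are $\F_p$-linear contractions in a suitable sense). The twist here is that we have extra transcendental parameters $z_1,\ldots,z_k$ and we are taking sections in all $k+1$ ``directions'' $x, z_1,\ldots,z_k$ simultaneously; and the base field $\F_p(z_1,\ldots,z_k)$ is not perfect, so one must be a little careful about what ``taking a $p$th root'' means.

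First I would set up notation: writing $w = (x, z_1,\ldots,z_k)$ and $\langle \cdot\rangle_{j,\alpha}$ for the section extracting monomials $x^k z^\alpha$ with $(k,\alpha) \equiv (j,\alpha) \bmod p$ (as defined just above the lemma), the key algebraic identity is the decomposition
\[
f = \sum_{j\in\F_p,\ \alpha\in\F_p^k} x^j z^\alpha \cdot \bigl(x^{-j}z^{-\alpha}\langle f\rangle_{j,\alpha}\bigr),
\]
where, by the remark preceding the lemma, each factor $g_{j,\alpha} \coloneqq x^{-j}z^{-\alpha}\langle f\rangle_{j,\alpha}$ lies in $\Cp \cap \Rp^{(k)} = \F_p(z_1^p,\ldots,z_k^p)\orb x^p\crb$, i.e.\ is a $p$th power of an element $h_{j,\alpha} \in \F_p(z_1,\ldots,z_k)\orb x\crb$ (here I use that the Frobenius $u \mapsto u^p$ is a bijection from $\F_p(z_1,\ldots,z_k)\orb x\crb$ onto $\F_p(z_1^p,\ldots,z_k^p)\orb x^p\crb$, which is where the one mild subtlety about perfectness is handled cleanly).

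The direction ``$f$ algebraic $\Rightarrow$ all sections algebraic'' then goes as follows: if $f$ is algebraic over $\F_p(w)$, so is $f^{1/p^N}$ for every $N$ — wait, more simply, $\Cp\cap\Rp^{(k)}$ is the field of constants, and algebraicity over $\F_p(w)$ of $g_{j,\alpha}^p = g_{j,\alpha}$... let me restate. Each $\langle f\rangle_{j,\alpha}$ is an $\F_p(w)$-linear combination (in fact an $\F_p[w]$-multiple by $x^j z^\alpha$) of $g_{j,\alpha}$, and $g_{j,\alpha} = h_{j,\alpha}^p$; so it suffices to show each $h_{j,\alpha}$ is algebraic. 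Now $g_{j,\alpha}$ is a $\F_p(w)$-linear combination of the conjugates $\{\langle f\rangle_{j',\alpha'}\}$... the cleanest route is: algebraicity of $f$ over $\F_p(w)$ implies algebraicity of $f$ over the larger field $\F_p(w^{1/p}) = \F_p(x^{1/p}, z_1^{1/p},\ldots,z_k^{1/p})$; applying the Frobenius isomorphism $\varphi^{-1}$ (the $p$th-root map, which sends $\F_p(w^{1/p})\orb x^{1/p}\crb$ to $\F_p(w)\orb x\crb$ $\F_p$-semilinearly) to the algebraic equation satisfied by each $g_{j,\alpha}$ shows $h_{j,\alpha} = \varphi^{-1}(g_{j,\alpha})$ is algebraic over $\F_p(w)$, hence so is $\langle f\rangle_{j,\alpha} = x^j z^\alpha h_{j,\alpha}^p$.

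For the converse, ``all sections algebraic $\Rightarrow f$ algebraic'', I would just invoke that the set of algebraic elements of $\F_p(w)\orb x\crb$ over $\F_p(w)$ forms a ring (indeed a field), together with the displayed finite decomposition $f = \sum_{j,\alpha} \langle f\rangle_{j,\alpha}$: a finite sum of algebraic elements is algebraic. The main obstacle, and the step deserving the most care, is the first direction — specifically, making rigorous that algebraicity of $f$ over $\F_p(x, z_1,\ldots,z_k)$ transfers to algebraicity of its sections, given that this base field is imperfect. Concretely I expect to spend the bulk of the argument verifying that the $p$th-root (inverse Frobenius) map is a well-defined ring homomorphism $\F_p(z_1^p,\ldots,z_k^p)\orb x^p\crb \to \F_p(z_1,\ldots,z_k)\orb x\crb$ and that it preserves algebraicity — i.e.\ that if $h^p$ satisfies a polynomial relation over $\F_p(x^p, z_1^p,\ldots,z_k^p)$, then $h$ satisfies one over $\F_p(x, z_1,\ldots,z_k)$. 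This is elementary (take $p$th roots of coefficients, or equivalently apply the field isomorphism $\F_p(w)\xrightarrow{\sim}\F_p(w^p)$) but it is the crux, and it is exactly the point where the classical one-variable statement gets upgraded to the multivariable parametrised one.
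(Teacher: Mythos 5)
The ``if'' direction of your argument (a finite sum of algebraic sections is algebraic) is fine and coincides with the paper's. The problem is the ``only if'' direction, which is where all the content of the lemma lies, and there your proposal is circular: you write that one should apply the inverse Frobenius $\varphi^{-1}$ ``to the algebraic equation satisfied by each $g_{j,\alpha}$'', but the existence of such an equation is precisely what has to be proved. Knowing that $f=\sum_{j,\alpha}\langle f\rangle_{j,\alpha}$ is algebraic says nothing a priori about the individual summands --- a sum of transcendental elements can perfectly well be algebraic. Moreover, you have mislocated the crux: the step you single out as delicate, namely that $h^p$ algebraic over $\F_p(x^p,z_1^p,\dotsc,z_k^p)$ implies $h$ algebraic over $\F_p(x,z_1,\dotsc,z_k)$, is immediate ($h$ is a root of $T^p-h^p$ over the algebraic extension $\F_p(x,z)(h^p)$), whereas the genuinely hard step --- passing from algebraicity of $f$ to algebraicity of each component $x^{-j}z^{-\alpha}\langle f\rangle_{j,\alpha}$ in the free $\Cp\cap\Rp^{(k)}$-module decomposition of $\Rp^{(k)}$ --- is nowhere addressed. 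Even the classical one-variable statement you want to reduce to (sections of an algebraic series in $\F_p\osb x\csb$ are algebraic) requires a real argument, and your proposal contains no reduction to it, nor an extension of it to the imperfect coefficient field $\F_p(z_1,\dotsc,z_k)$ with sections taken simultaneously in all $k+1$ directions.

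For comparison, the paper closes exactly this gap by a differential argument special to the setting of $\Rp$: one inducts on $\ell(f)=\min\{i\mid f^{(i)}=0\}$, which is finite by Lemma~\ref{lem:derivationrules}\,(i). Since $f'$ is algebraic and $\ell(f')<\ell(f)$, the induction hypothesis decomposes $f'$ into algebraic sections $g_{j,\alpha}^p x^jz^\alpha$; integrating each monomial $x^jz^\alpha$ via Lemma~\ref{lem:primitive} produces an element $g$ whose sections are visibly $\F_p[x,z]$-combinations of the algebraic $g_{j,\alpha}^p$, and $h=f-g$ is algebraic with $h'=0$, hence lies in $\Cp$ and equals its single section $\langle h\rangle_{0,\mathbf{0}}$. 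You would need either to reproduce an argument of this kind, or to prove directly (e.g.\ by an Ore--Christol type finite-dimensionality argument adapted to the imperfect base field) that the section operators preserve algebraicity; as it stands the forward implication is unproven.
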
 
\begin{proof} 
Since $f$ is the finite sum over all its sections, the condition is sufficient.
To see that it is necessary, we use the induction on 
$\ell(f)=\min\{i\in\N\mid f^{(i)}=0\}$. This function $\ell$ is well-defined 
since $f^{(p^{k+1})}=0$, which will be proved in Lemma \ref{lem:derivationrules}.
When $\ell(f)=0$, we see that $f=0$ and 
all its sections are $0$. Thus the assertion holds. 

Let $f\in\R_p$ be algebraic and assume now that $\ell(f)>0$.  Then $f'$ is also algebraic and 
$\ell(f')=\ell(f)-1$. Thus, by the induction hypothesis, we can decompose $f'$ 
into algebraic sections, that is, there exist algebraic elements 
$g_{j,\alpha}\in\R_p$ for $j\in \F_p$ and $\alpha \in \F_p^{k}$ 
such that $f'=\sum_{j,\alpha}g_{j,\alpha}^px^jz^\alpha$.
Set 
$g=\sum_{j,\alpha}g_{j,\alpha}^p\int(x^jz^\alpha)$, in accordance with Lemma \ref{lem:primitive}.
Note that the sections of $g$ are algebraic since they are 
$\F_p[x,z_1,\dotsc]$-linear combinations of the elements $g_{j,\alpha}^p$.  
We set $h=f-g$. Since $f$ and $g$ are both algebraic, so is $h$.  
Since $h'=f'-g'=0$, we see that $h\in\Cp$ and 
$h=\langle h \rangle_{0, \bf{0}}$.  
Thus sections of $g$ and $h$ are algebraic, and hence the sections of 
$f$ are algebraic too.
\end{proof} 

\begin{proof}[Proof of Proposition~\ref{prop:monomially}.] 
Let $Y=(y_1,\ldots,y_n)^\top$ and 
$\widetilde Y=(\widetilde y_1,\ldots, \widetilde y_n)^\top$.  
Then there is an invertible matrix 
$C\in \operatorname{GL}_n(\Cp)$ such that $C Y=\widetilde Y $.
Since $y_1,\ldots,y_n$ is a xeric basis, there exist $n$ distinct 
pairs $(\rho_j,i_j)$ for $1\leq j\leq n$ such that 
$\rho_j$ is a local exponent of $Ly=0$ with multiplicity $m_j$, 
$1\leq i_j\leq m_j$ is an index, and 
$\langle y_{j'}\rangle_{\rho_j,i_j^\ast}=\delta_{j,j'}x^{\rho_j}z^{i_j^\ast}$.
It follows that 
$$
\langle Y\rangle_{\rho_j,i_j^\ast}
=(\langle y_1\rangle_{\rho_j,i_j^\ast},\ldots,
\langle y_n\rangle_{\rho_j,i_j^\ast})^\top=
x^{\rho_j}z^{i_j^\ast}e_j,
$$
where $e_j$ denotes the $j$-th unit vector. 

We obtain 
\[\langle \widetilde Y\rangle _{\rho_j, i^*}=\langle C Y\rangle_{\rho_j,i^\ast}= C x^{{\rho_j}}z^{i^\ast} e_{j},\]  
i.e, the entries of $C$ are essentially given by the sections of $\widetilde Y$. Take $N\in\Z_{>0}$ in such a way that 
$x^{\rho_j}z^{i_j^\ast}=\pi_N(x^{\rho_j}z^{i_j^\ast})$ 
for any $1\leq j\leq n$. 
Since $\det C\neq0$, changing $N$ if necessary, 
we may also assume $\pi_N(\det C)\neq0$.

Let $k\geq N$.  
Since the entries of $\pi_k(\widetilde{Y})$ are algebraic 
by the assumption, those of 
$\langle\pi_k(\widetilde{Y})\rangle_{\rho_j,i_j^\ast}$ 
are also algebraic by Lemma \ref{lem:secalg}. Note that 
$$
x^{\rho_j}z^{i_j^\ast}\pi_k(C)e_j
=\langle \pi_k(\widetilde{Y})\rangle_{\rho_j,i_j^\ast}.
$$
Therefore we conclude that all entries of $\pi_k(C)$ are algebraic.
Since $\det \pi_k(C)\neq0$, we also see that 
$\pi_k(C)\in\operatorname{GL}_n(\Cp)$.
It follows that all the entries of 
$\pi_k({Y})=\pi_k(C)^{-1}\pi_k(\widetilde{Y})$ 
are algebraic, since those of
$\pi_k(C)$ and $\pi_k(\widetilde{Y})$ are algebraic.
Thus $\pi_k(y_j)$ is algebraic for all $k\geq N$.  
Since $\pi_N(y_j)$ is algebraic, it is immediate that 
$\pi_k(y_j)$ is algebraic for all $k\leq N$.
\end{proof} 
 
Thus, we have essentially reduced Problem~\ref{quest:algebraicity} to the following. 
 
 
\begin{prob}\label{quest:algmonom} 
For which differential operators $L\in \F_p[x][\partial]$ does the basis of \monominduced solutions of $Ly=0$ have algebraic projections? 
\end{prob} 
 
We conclude this section with an assertion about the algebraicity of projections: 
\begin{lem} \label{lem:zcoeff} 
Let $f\in \F_p[z_1, z_2,\ldots, z_k]\osb x\csb$ be algebraic over $\F_p(x,z_1, z_2,\ldots, z_k)$. Write $f=\sum f_\alpha z^\alpha$ for $f_\alpha \in  \F_p\osb x\csb$. Then $f_\alpha$ is algebraic over $\F_p(x)$ for all $\alpha\in \N^{k}$. 
\end{lem}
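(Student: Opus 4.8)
The plan is to extract each coefficient $f_\alpha$ from $f$ by a combination of taking derivatives with respect to the "logarithmic" variables and then specializing the $z_i$ to constants, while checking at every step that algebraicity over the relevant rational function field is preserved. First I would observe that differentiation by $\partial$ is not the right tool here, since $\partial$ mixes the $z_i$ in a complicated way; instead I want to use honest partial derivatives $\partial/\partial z_i$, which are $\F_p$-linear derivations of the polynomial ring $\F_p[z_1,\dots,z_k]\osb x\csb$ commuting with the $x$-adic topology. A key point is that if $g\in \F_p[z_1,\dots,z_k]\osb x\csb$ is algebraic over $\F_p(x,z_1,\dots,z_k)$, then so is $\partial g/\partial z_i$: differentiate a polynomial relation $P(g)=0$ with $P\in\F_p(x,z_1,\dots,z_k)[T]$ minimal, getting $P'(g)\cdot(\partial g/\partial z_i) = -(\partial P/\partial z_i)(g)$, and since $P'(g)\neq 0$ (the characteristic-$p$ subtlety is handled because $P$ is the minimal polynomial, hence separable — or one argues $P'(g)\ne 0$ directly as $\deg_T P' < \deg_T P$), this expresses $\partial g/\partial z_i$ as an element of $\F_p(x,z_1,\dots,z_k,g)$, which is algebraic over $\F_p(x,z_1,\dots,z_k)$.

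Next, I would isolate the individual monomials $z^\alpha$. Since each exponent $\alpha_i$ that actually occurs in $f$ is bounded (write $f = \sum_\alpha f_\alpha z^\alpha$; because $f$ is algebraic, by \cref{lem:algandproj} and the fact that the $z_i$ appearing are finitely many one can also bound their degrees — or, more simply, work one variable at a time with a formal argument), repeated application of $\partial/\partial z_i$ followed by the substitution $z_i = 0$ extracts fixed powers of $z_i$ one variable at a time. Concretely, $\left.\frac{1}{\alpha!}\frac{\partial^{|\alpha|}}{\partial z_1^{\alpha_1}\cdots\partial z_k^{\alpha_k}} f\right|_{z_1=\cdots=z_k=0}$ picks out $f_\alpha$, provided none of the $\alpha_i$ reaches $p$ (if some $\alpha_i\ge p$, the factorial vanishes mod $p$ and one must instead peel off the top $z_i$-degree using the substitution $z_i\mapsto z_i+c$ trick, or reduce to the case of bounded degree first). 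Since the substitution $z_i = 0$ is a specialization of a transcendental variable and algebraicity over $\F_p(x,z_1,\dots,z_k)$ descends to algebraicity over $\F_p(x,z_1,\dots,\widehat{z_i},\dots,z_k)$ under setting $z_i=0$ (a standard fact: a polynomial relation specializes, and the minimal polynomial's leading coefficient can be arranged not to vanish at $z_i=0$ by first clearing denominators appropriately), after $k$ such steps we arrive at $f_\alpha\in\F_p\osb x\csb$ algebraic over $\F_p(x)$.

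I expect the main obstacle to be the characteristic-$p$ bookkeeping when some $\alpha_i \ge p$: the naive "divide by $\alpha!$" formula for extracting a coefficient via iterated partial derivatives breaks down, so one needs either to first establish that only boundedly many $z$-monomials appear (which follows from $f$ being algebraic: its minimal polynomial has coefficients in $\F_p(x,z_1,\dots,z_k)$, and one can bound the $z$-degrees of $f$ in terms of those of the minimal polynomial via, e.g., the Newton–Puiseux / implicit function theorem in $\F_p(x)(z_1,\dots,z_k)\osb$) and then induct on the (finite) total $z$-degree, or to use the shift substitution $z_i\mapsto z_i + t$ with a fresh transcendental $t$ to read off individual graded pieces. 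The rest is routine: algebraicity is preserved under the elementary operations (sums, products, algebraic substitutions, derivations, specialization of a transcendental to a constant) used along the way.
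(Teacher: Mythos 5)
Your strategy---extract $f_\alpha$ by differentiating with respect to the $z_i$ and then specializing---runs into two concrete obstructions in characteristic $p$, and the fix you sketch for the second one essentially collapses into the paper's actual argument. First, your key step, that $\partial g/\partial z_i$ is algebraic whenever $g$ is, rests on $P'(g)\neq 0$ for the minimal polynomial $P$. But over the non-perfect field $\F_p(x,z_1,\ldots,z_k)$ a minimal polynomial need not be separable: it can have the form $Q(T^p)$, in which case $P'$ is \emph{identically} zero, so neither ``minimal hence separable'' nor ``$\deg_T P'<\deg_T P$'' rescues the argument; you would have to prove separately that this degenerate case cannot occur for elements of $\F_p[z_1,\ldots,z_k]\osb x\csb$, which is not obvious. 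Second, the claim that only boundedly many monomials $z^\alpha$ occur in an algebraic $f$ is false: $f=1/(1-z_1x)=\sum_n z_1^n x^n$ is rational, yet every power of $z_1$ appears (\cref{lem:algandproj} bounds only the \emph{number} of variables, not their degrees). Hence the reduction to bounded total $z$-degree is unavailable, and since $\partial^{p}/\partial z_i^{p}$ vanishes identically, the formula $\frac{1}{\alpha!}\partial^{\alpha}(\cdot)\vert_{z=0}$ cannot reach the coefficients with some $\alpha_i\geq p$.

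The viable repair is the one you mention only in passing (``peel off the top $z_i$-degree''), and it is exactly what the paper does, with no derivatives at all: induct on $k$; setting $z_k=0$ in a polynomial relation shows that $f\vert_{z_k=0}$ is algebraic (after clearing denominators and dividing out the largest power of $z_k$ common to all coefficients of $P$); then $(f-f\vert_{z_k=0})/z_k$ is again algebraic, being a difference and a quotient of algebraic elements, and iterating this extracts the coefficient of each fixed power $z_k^{\alpha_k}$ without ever dividing by a factorial or invoking separability. I recommend abandoning the $\partial/\partial z_i$ route entirely in favour of this subtract-and-divide iteration.
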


\begin{proof} 
We use induction on the number of variables $k$. The case $k=0$ is trivial, so assume we have proven the statement for $k-1$. Take $f\in \mathbb{F}_p[z_1,\ldots,z_k]\osb x\csb$ algebraic over $\mathbb{F}_p(x,z)$ with minimal polynomial $P.$ Chose $\alpha=(\alpha', \alpha_n)\in \N^k$ with $\alpha'\in \N^{k-1}$. Setting $z_k=0$ in the identity $P(f)=0$ shows that $f\vert_{z_k=0}\in\mathbb{F}_p[z_1,\ldots,z_{k-1}]\osb x\csb$ is algebraic over $\mathbb{F}_p(x,z).$ 
Then the induction hypothesis applies so we know that $\left(f\vert_{z_k=0}\right)_{\alpha'}=f_{(\alpha', 0)}$ is algebraic over $\mathbb{F}_p(x,z)$. We can apply this argument to the algebraic element $(f-f\vert_{z_k=0})/z_k$ to get algebraicity of $f_{(\alpha', 1)}$ and repeat to show the algebraicity of $f_\alpha=f_{(\alpha',\alpha_n)}$. 
\end{proof} 
 
 
\section{The Exponential Differential Equation}\label{sec:expp} 
In \cite{FH23} the exponential function $\exp_p$ in characteristic $p$ was defined as the \monominduced solution of $y'=y$. All further solutions of $y'=y$ in $\mathcal{R}_p$ are then given by $\mathcal{C}_p$-multiples of $\exp_p$. In this section we are going to define a different element $\widetilde{\exp}_p\in \mathcal{R}_p$ as an infinite product with specified factors and show that it is another solution of $y'=y$.\\ 
 
We start with some preliminaries. 
Recall that 
$w_k= z_1^{p^{k-1}}z_2^{p^{k-2}}\cdots z_{k-1}^p z_k^{1}=w_{k-1}^pz_k$. 
Then clearly 
\[ (x^{p^k}w_k)'=x^{p^k-1} z_1^{p^{k-1}-1}z_2^{p^{k-2}-1}\cdots z_{k-1}^{p-1} 
=x^{p-1}(x^pw_1)^{p-1}\cdots (x^{p^{k-1}}w_{k-1})^{p-1} 
.\] 
Let $M$ be a monomial in $\F_p[x,z_1, z_2,\ldots, z_k]$, for some $k\geq 0$. 
Then $M^{(p^{k+1})}=0$ vanishes for any such $M$, and 
$M=(x^{p^k}w_k)'$ is up to multiplication with elements of $\mathcal{C}_p$ the unique element such that $M^{(p^{k+1}-1)}\neq0$.
More precisely, we have the following result.

\begin{lem}\label{lem:derivationrules} 
The derivation 
on $\Rp$ satisfies the following rules. 
\begin{enumerate}[(i)] 
\item 
$\ds (x^{\alpha_0}z_1^{\alpha_1}\dotsm z_k^{\alpha_k})^{(n)}=0$ 
\quad for $n\geq p^{k+1}$. 
\item 
$\ds z_{k+1}^{(p^{k+1})}
=(x^{-1}z_1^{-1}\dotsm z_k^{-1})^{(p^{k+1}-1)}
=(-1)^{k+1}x^{-p^{k+1}}z_1^{-p^{k}}\dotsm z_k^{-p}$. 
\item 
$\ds 
(x^{\alpha_0}z_1^{\alpha_1}\dotsm z_k^{\alpha_k})^{(p^{k+1}-1)}
\neq0$ if and only if $\alpha_i \in p{\mathbb Z}-1$ for all $i$. 
\item 
$\left(
(x^{p^{k+1}}w_{k+1})'
\right)^{(p^{k+1}-1)}
=(-1)^{k+1}$. 
\end{enumerate} 
\end{lem}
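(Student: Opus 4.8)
The plan is to prove the four rules in order, since each builds on the previous ones, and the computation is essentially a recursion on $k$.

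\medskip

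\noindent\textbf{Part (i).} First I would establish the vanishing of high derivatives by induction on $k$. For $k=0$ one checks directly that $(x^{\alpha_0})^{(p)} = 0$: either $\alpha_0 \geq 0$ and the binomial coefficient $\alpha_0(\alpha_0-1)\cdots(\alpha_0-p+1)$ vanishes mod $p$ (it contains $p$ consecutive integers, one divisible by $p$ -- with the small caveat that if $0 \le \alpha_0 < p$ the product already hits $0$), or $\alpha_0 < 0$ and one argues the same way with the generalized binomial coefficient, which is again a product of $p$ consecutive integers. For the inductive step, write $z_{k+1}' = (x z_1 \cdots z_k)^{-1}$, so differentiating $x^{\alpha_0} z_1^{\alpha_1}\cdots z_{k+1}^{\alpha_{k+1}}$ by Leibniz produces a sum of terms each of which, after pulling out powers of $z_{k+1}$, is $z_{k+1}^{\alpha_{k+1}-j}$ times a monomial in $x, z_1, \ldots, z_k$ only. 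Each such monomial-in-$(x,\ldots,z_k)$ factor has vanishing $p^{k+1}$-th derivative by the inductive hypothesis, and $z_{k+1}^{m}$ has vanishing $p$-th derivative by the $k=0$ case applied to the variable $z_{k+1}$ (note $z_{k+1}$ behaves like a ``variable'' under $\partial$ only up to the factor $(xz_1\cdots z_k)^{-1}$, so this step needs a little care -- one really shows $(z_{k+1}^m g)^{(p)}$ lies in the span of $z_{k+1}^{m'} g^{(\ast)}$ with the total derivative order bounded, then iterates). Combining via the Leibniz-type bound $p^{k+1} + p - 1 < p^{k+2}$ minus overlaps gives the claim; the cleanest route is probably to show directly that $\partial^{p}$ applied to anything of the form $z_{k+1}^m \cdot (\text{stuff in }x,\ldots,z_k)$ lowers a suitable weight, and that $p^{k+2}$ applications kill the weight entirely.

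\medskip

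\noindent\textbf{Parts (ii) and (iv).} These are explicit computations. For (ii), I would compute $(x^{-1}z_1^{-1}\cdots z_k^{-1})^{(p^{k+1}-1)}$ by induction on $k$, using that $x^{-1}z_1^{-1}\cdots z_k^{-1} = z_{k+1}'$ and the derivation rule $z_{k+1}' = z_k'/z_k$, hence $\partial(x^{-1}\cdots z_k^{-1}) = \partial^2 z_{k+1} = \partial(z_{k+1}')$. The factor $(-1)^{k+1}$ and the exponents $-p^k, \ldots, -p$ are forced: differentiating $z_j^{-1}$ repeatedly produces $(-1)^{\#}$ signs and, because of part (i), only the ``diagonal'' term in which each variable's exponent is driven down to $-p^{k-j+1}$ survives at derivative order exactly $p^{k+1}-1$. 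Part (iv) follows by combining (ii) with the identity computed just before the lemma, namely $(x^{p^{k+1}}w_{k+1})' = x^{p-1}(x^p w_1)^{p-1}\cdots(x^{p^k}w_k)^{p-1} = x^{p^{k+1}-1} z_1^{p^k-1}\cdots z_k^{p-1}$: applying $\partial^{p^{k+1}-1}$ to this and invoking (ii) (after noting all the ``$p$th power parts'' $x^{p^{k+1}}$, $z_1^{p^k}$, etc.\ are constants that can be pulled out of the derivative) yields exactly $(-1)^{k+1}$.

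\medskip

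\noindent\textbf{Part (iii).} This is the characterization of which monomials have a nonzero top derivative. One direction (the ``if'') is essentially (iv): if every $\alpha_i \in p\Z - 1$, factor $x^{\alpha_0}z_1^{\alpha_1}\cdots z_k^{\alpha_k}$ as $(\text{a }p\text{th power, hence constant}) \times (x^{p^{k+1}}w_{k+1})'$ up to a nonzero scalar, wait -- more precisely as $x^{p^{k+1}\lfloor\cdot\rfloor}\cdots$ times $x^{-1}z_1^{-1}\cdots z_k^{-1}$, then apply (ii). For the ``only if'', I would argue contrapositively: if some $\alpha_{i}\notin p\Z-1$, I want $\partial^{p^{k+1}-1}$ to kill the monomial. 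The natural tool is again the Leibniz expansion together with (i): write $M = x^{\alpha_0}\cdots z_{i-1}^{\alpha_{i-1}} \cdot z_i^{\alpha_i} \cdot (z_{i+1}^{\alpha_{i+1}}\cdots z_k^{\alpha_k})$ and track where derivatives can be ``spent''; by (i) the number of derivatives absorbable by the block in variables $z_{i+1},\ldots,z_k$ alone is at most $p^{k-i+1}-1$ beyond what its coefficient contributes, and a careful bookkeeping of these local bounds shows the total $p^{k+1}-1$ cannot be reached unless each exponent is $\equiv -1 \pmod p$.

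\medskip

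\noindent\textbf{Expected main obstacle.} The routine-looking parts (ii) and (iv) are genuinely routine once (i) and (iii) are in hand, so the real work is the bookkeeping in (i) and (iii). The subtlety is that $z_{k+1}$ is \emph{not} a free variable under $\partial$ -- its derivative is $(x z_1\cdots z_k)^{-1}$, which re-introduces the earlier variables with negative exponents -- so a naive ``treat each $z_i$ independently'' argument fails, and one must set up the right weight function (something like $w(x^{\alpha_0}z_1^{\alpha_1}\cdots z_k^{\alpha_k}) = \sum_i \overline{\alpha_i}\, p^i$, the very function $e$ used in Lemma~\ref{lem:primitive}, or its ``complement''), show $\partial$ interacts with it predictably, and verify that a single application of $\partial^{p}$ to the $z_{k+1}$-block strictly decreases a controlled quantity. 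I would expect to spend most of the proof making that weight argument airtight; everything else is substitution and sign-counting.
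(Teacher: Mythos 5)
Your overall architecture (an induction on $k$, with (iii) and (iv) deduced from (i) and (ii)) is the same as the paper's, but both core steps have genuine gaps as written. In (i), the per-variable bound you invoke is false: $z_{k+1}^m$ does \emph{not} have vanishing $p$-th derivative. Already $z_1^{(p)}=(x^{-1})^{(p-1)}=(p-1)!\,x^{-p}\neq 0$; in general a single $z_j$ absorbs exactly $p^j$ derivatives and dies only at order $p^j+1$, because $z_j^{(p^j)}$ is a $p$-th power and hence a constant. That sharp bound \emph{is} statement (ii) at level $j-1$, so (i) and (ii) cannot be proved sequentially as you propose: they must be run as a joint induction. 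Using only (i) at lower levels gives the weaker bound ``$z_j$ dies at order $p^{j+1}$'', and then a monomial with exponents $\leq p-1$ could a priori absorb on the order of $p^{k+2}$ derivatives, so the induction does not close. The paper's actual mechanism is to expand $(x^{\alpha_0}\cdots z_k^{\alpha_k})^{(n)}$ by the multinomial Leibniz rule as a sum over distributions of the $n$ derivatives among the $\sum_j\alpha_j$ single-variable factors; a nonzero term forces each factor $z_j$ to receive at most $p^j$ derivatives, whence $n\leq\sum_j\alpha_jp^j\leq p^{k+1}-1$. Neither of your two proposed routes (the bound ``$p^{k+1}+p-1<p^{k+2}$ minus overlaps'', which is not the relevant quantity, nor the unspecified weight decreased by $\partial^p$) is carried out.

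In (ii), the constant $(-1)^{k+1}$ is not obtained by counting signs from differentiating the $z_j^{-1}$. The paper multiplies by the constant $(xz_1\dotsm z_k)^p$ to convert $x^{-1}z_1^{-1}\dotsm z_k^{-1}$ into the positive-power monomial $(xz_1\dotsm z_k)^{p-1}$, observes that only the extremal term of the multinomial expansion survives (each factor $z_j$ receiving exactly $p^j$ derivatives), and evaluates the resulting coefficient $\frac{(p^{k+1}-1)!}{(1!\,p!\dotsm(p^k)!)^{p-1}}$ via Lucas's theorem as $((p-1)!)^{k+1}=(-1)^{k+1}$. Nothing in your plan produces this coefficient. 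Your treatment of (iii) and (iv) is essentially correct modulo the above: the forward direction of (iii) follows from the equality case of $n\leq\sum_j\alpha_jp^j$, and (iv) is a restatement of (ii).
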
 
\begin{proof} 
We regard $x$ as $z_0$. First we prove (i) and (ii) by induction. 
They are clear for $k=-1$ since 
$(1)^{(n)}=0$  for $n\geq 1$ and 
$(x)'=1=(-1)^{-1+1}$. 
Assume (i) and (ii) holds up to $k-1$. 
We show (i) for $k$. 
Note that we may assume $0\leq \alpha_i\leq p-1$. 
By the product rule, we have 
\begin{equation} \label{eq:monomials} 
(x^{\alpha_0}z_1^{\alpha_1}\dotsm z_k^{\alpha_k})^{(n)}
=\sum_{j,\ell}\sum_{i_{j,\ell}} 
\dfrac{n!}{i_{0,0}!\dotsm i_{k,\alpha_k}!} 
\prod_{j=0}^k 
\prod_{\ell=1}^{\alpha_j} 
z_j^{(i_{j,\ell})}, 
%
\end{equation} 
where $0\leq j\leq k$, $1\leq \ell\leq \alpha_j$, and $i_{j,\ell}\geq 0$ with $\sum_{\ell=1}^{\alpha_j} i_{j,\ell} =n$.
If this sum is non-zero, we have 
$i_{j,\ell}\leq p^{j}$ for any $j$ and $\ell$ by (ii), thus we have 
\begin{equation} \label{eq:inequality} n\leq 
\sum_{j=0}^k 
\sum_{\ell=1}^{\alpha_j} 
p^j 
=\sum_{j=0}^k\alpha_jp^j 
\leq\sum_{j=0}^k(p-1)p^j\leq p^{k+1}-1<p^{k+1}, 
\end{equation}
which verifies (i) for $k$.  By Equation (\ref{eq:monomials}), we also have
\begin{align*} 
&(xz_1\dotsm z_{k})^{p} 
z_{k+1}^{(p^{k+1})}
=(xz_1\dotsm z_{k})^{p} 
(x^{-1}z_1^{-1}\dotsm z_{k}^{-1})^{(p^{k+1}-1)}
\\& 
=\left((xz_1\dotsm z_{k})^{p-1}\right)^{(p^{k+1}-1)}
= 
\dfrac{(p^{k+1}-1)!}{ 
(1!p!\dotsm(p^k)!)
^{p-1} 
} 
\left(\prod_{j=0}^kz_j^{(p^j)}\right)^{p-1}. 
\end{align*} 
Now by Lucas's formula we obtain 
\begin{align*} 
\dfrac{(p^{k+1}-1)!}{ 
(1!p!\dotsm(p^k)!)^{p-1} 
} 
= 
\prod_{s=0}^k 
\prod_{t=1}^{p-1} 
\binom{t\cdot p^{s}+p^s-1}{p^s} 
=((p-1)!)^{k+1}=(-1)^{k+1}. 
\end{align*} 

Also by (ii) of the induction hypothesis, we have 
\begin{align*} 
\left(\prod_{j=0}^kz_j^{(p^j)}\right)^{p-1} 
& 
=\left(
\prod_{j=0}^k 
(-1)^{j}x^{-p^{j}}z_1^{-p^{j-1}}\dotsm z_{j-1}^{-p} 
\right)^{p-1} 
=\dfrac{(xz_1\dotsm z_{k-1})^p} 
{x^{p^{k+1}}z_1^{p^{k}}\dotsm z_{k-1}^{p^2}}. 
\end{align*} 
Therefore we conclude that 
$$ 
z_{k+1}^{(p^{k+1})}
=\dfrac{(-1)^{k+1}} 
{x^{p^{k+1}}z_1^{p^{k}}\dotsm z_{k-1}^{p^2} 
z_{k}^{p}}, 
$$ 
which proves (ii) for $k$. 
 
\medskip 
 
The rest is easy. For (iii), the forward implication follows from  
inequality (\ref{eq:inequality}), while 
the backward implication follows from (ii). And (iv) is equivalent to (ii). 
\end{proof} 
 
Higher derivatives of $w_k$ are in general 
sums of monomials without any obvious pattern. 
Thus the following refinement of Lemma \ref{lem:derivationrules} (iv) 
is quite surprising. 
 
\begin{prop} \label{prop:expperiodicity} 
The derivatives $(x^{p^{k}} w_{k})'$ of the monomials $x^{p^{k}} w_{k}$ 
satisfy the differentiation rule 
\[\left(
(x^{p^{k+1}} w_{k+1})'\right)^{(p^{k+1}-p^{k})} 
 = - (x^{p^{k}} w_{k})'.\] 
\end{prop}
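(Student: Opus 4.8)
The plan is to compute both sides explicitly using the derivation rules for monomials established in Lemma~\ref{lem:derivationrules}. First I would record the two relevant explicit forms: from the identity
\[(x^{p^{k+1}}w_{k+1})' = x^{p^{k+1}-1}z_1^{p^k-1}z_2^{p^{k-1}-1}\cdots z_k^{p-1},\]
so the left-hand side is the $(p^{k+1}-p^k)$-th derivative of the single monomial $M_{k+1} := x^{p^{k+1}-1}z_1^{p^k-1}\cdots z_k^{p-1}$, whose exponents are all of the form $p^j\cdot p - 1$ (for the variable $z_{k-j}$, reading appropriately), i.e. each exponent lies in $p\Z - 1$. Likewise the right-hand side is $-M_k := -x^{p^k-1}z_1^{p^{k-1}-1}\cdots z_{k-1}^{p-1}$. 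So the statement to prove is $M_{k+1}^{(p^{k+1}-p^k)} = -M_k$.

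The key step is to apply the multinomial product rule \eqref{eq:monomials} to $M_{k+1}^{(n)}$ with $n = p^{k+1}-p^k = (p-1)p^k$, and argue that only one term survives. Expanding $M_{k+1}^{(n)}$ distributes $n$ derivatives among the $\sum_j \alpha_j = (p^k-1)+(p^{k-1}-1)+\cdots+(p-1)+(p^{k+1}-1)$ linear factors (counting $x$ as $z_0$ with exponent $p^{k+1}-1$); by Lemma~\ref{lem:derivationrules}(ii) each copy of $z_j$ can absorb at most $p^j$ derivatives before being killed. I would then run the same counting estimate as in \eqref{eq:inequality}: the total capacity of the factors coming from $z_j$ (appearing $\alpha_j$ times) is $\alpha_j p^j$, and one checks that forcing $\sum i_{j,\ell} = (p-1)p^k$ pins down almost all the $i_{j,\ell}$. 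The natural guess is that the surviving term is the one where the $x = z_0$ factors each get the maximal $p^0 = 1$ (there are $p^{k+1}-1$ of them, contributing... wait — that already overshoots), so more carefully: the distribution that survives is the one that loads $z_k$'s copies to $p^k$, $z_{k-1}$'s to $p^{k-1}$, and so on down to some point, leaving a residual load that exactly reconstructs $M_k$. Concretely, after applying $p^j$ derivatives to a single $z_{k-j}$ one gets $z_{k-j}^{(p^j)} = (-1)^{j}x^{-p^j}z_1^{-p^{j-1}}\cdots z_{k-j-1}^{-p}$ by part (ii), and the product of all these negative-exponent contributions should, by the same Lucas-style cancellation computed in the proof of Lemma~\ref{lem:derivationrules}(ii), collapse the positive and negative exponents to leave exactly $x^{p^k-1}z_1^{p^{k-1}-1}\cdots z_{k-1}^{p-1} = M_k$, with overall sign and multinomial coefficient combining to $-1$.

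The main obstacle will be establishing that this one multinomial term is the \emph{only} nonzero one — the counting inequality alone gives an upper bound on $n$, but here $n = (p-1)p^k$ is strictly less than the extremal value $p^{k+1}-1$, so the argument that the distribution is forced is more delicate than in Lemma~\ref{lem:derivationrules}. I would handle this by a weight/order-of-vanishing argument in $x$: each derivative lowers the $x$-order by exactly $1$, and the factors $z_{k-j}^{(p^j)}$ introduce the lowest possible powers of the $z_i$; tracking the order in $x$ (or a suitable lexicographic weight on $(\alpha_0,\ldots,\alpha_k)$ with $x$-adic order as a tiebreaker) should show that any distribution other than the claimed one either produces a factor that vanishes — because some $i_{j,\ell}$ exceeds $p^j$ — or produces a monomial of the wrong $x$-degree, which is impossible since $M_{k+1}^{(n)}$ is homogeneous of $x$-degree $p^{k+1}-1-n = p^k-1$, exactly that of $M_k$. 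Once uniqueness is in hand, the surviving coefficient is computed by the identical Lucas-formula manipulation already carried out in the proof of Lemma~\ref{lem:derivationrules}(ii), which yields the sign $(-1)$; alternatively, one can sidestep the coefficient computation entirely by evaluating both sides' top derivative: apply $(\,\cdot\,)^{(p^k-1)}$ to both sides and invoke Lemma~\ref{lem:derivationrules}(iv) twice, since $(p^{k+1}-p^k)+(p^k-1) = p^{k+1}-1$, reducing the whole identity to the already-proven fact that $\big((x^{p^{k+1}}w_{k+1})'\big)^{(p^{k+1}-1)} = (-1)^{k+1} = -(-1)^k = -\big((x^{p^k}w_k)'\big)^{(p^k-1)}$ — provided one first checks that $\big((x^{p^{k+1}}w_{k+1})'\big)^{(p^{k+1}-p^k)}$ is itself a constant multiple of $M_k$, which is where the uniqueness argument is still needed but only up to scalar.
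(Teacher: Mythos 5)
Your route is the ``proof by direct computation'' that the paper only alludes to; the paper itself deduces the proposition from Theorem~\ref{thm:expt} and Proposition~\ref{prop:period}: since $\widetilde e_{p^n-1}x^{p^n-1}=(-1)^n(x^{p^n}w_n)'$ and $(\widetilde e_i x^i)'=\widetilde e_{i-1}x^{i-1}$, the formula falls out immediately. Your identification of the intended surviving term in the Leibniz expansion of $M_{k+1}^{(n)}$, $n=(p-1)p^k$, is correct: loading each of the $p-1$ copies of $z_k$ with exactly $p^k$ derivatives yields $(-1)^{k(p-1)}\tbinom{(p-1)p^k}{p^k,\ldots,p^k}\,M_k=-M_k$ by Lemma~\ref{lem:derivationrules}(ii) and Lucas. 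So the computation of the surviving term is fine.

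The gap is exactly at the step you flag as the main obstacle, and the mechanism you propose for it cannot work. Homogeneity in $x$ is vacuous: $\partial$ lowers the $x$-degree of \emph{every} monomial of $\Rp$ by exactly one, so every term of the expansion of $M_{k+1}^{(n)}$ automatically has $x$-degree $p^k-1$, and no degree or monomial-weight argument can separate the terms --- the unwanted terms are killed by their \emph{coefficients} vanishing mod $p$, not by their monomials. Likewise the capacity bound $i_{j,\ell}\le p^j$ is far from binding, since $n=(p-1)p^k$ is much smaller than the total capacity $\sum_{j=0}^k(p^{k+1-j}-1)p^j$ once $k\ge 1$. Concretely, for $p=2$, $k=1$, $M_2=x^3z_1$, $n=2$: the distribution in which two of the three $x$-factors each absorb one derivative passes both of your filters and produces the monomial $xz_1$, which is not even a $\Cp$-multiple of $M_1=x$ (so it also defeats your ``up to scalar, then apply Lemma~\ref{lem:derivationrules}(iv)'' fallback); it disappears only because its total coefficient is $3\cdot\tbinom{2}{1,1}=6\equiv 0\pmod 2$. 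What is actually needed is a Kummer/Lucas digit argument on the multinomial coefficients in \eqref{eq:monomials}: because $n=(p-1)p^k$ has a single nonzero base-$p$ digit, the coefficient $n!/\prod i_{j,\ell}!$ is nonzero mod $p$ only if the $i_{j,\ell}$ add without carrying, forcing each $i_{j,\ell}$ to be a multiple of $p^k$; combined with $i_{j,\ell}\le p^j\le p^k$ this gives $i_{j,\ell}\in\{0,p^k\}$ with $j=k$ for the nonzero ones, which is precisely the unique surviving term. This digit argument is absent from your plan, so as written the proof does not close.
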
 
 
The formula will be a consequence of Theorem~\ref{thm:expt} and Proposition~\ref{prop:period}. One can also give a proof by direct computation.\\ 
 
This proposition suggests an alternative construction of a solution of $y'=y$: Note first that a series $y=\sum_{i=0}^\infty a_i(z)x^i\in \Rp$ with $a_i\in \F_p\orb z_1,z_2,\ldots\crb $ is a solution of $y'=y$ if and only if 
\begin{enumerate}[(i)] 
\item $ a_0'=0,$ i.e., $a_0\in \F_p(z_1^p,z_2^p,\ldots)$, and 
\item $(a_i(z) x^i)' = a_{i-1}(z) x^{i-1}$. 
\end{enumerate} 
So we set $a_{p^k-1}(z)\coloneqq (-1)^k w_k'$ for all $k$ and then define $a_{p^{k}-m}(z)$ via the equations  $x^{p^{k}-m}a_{p^{k}-m}(z)=  (-1)^k (x^{p^k}w_k)^{(m)}$ for all $m\geq 1$. Proposition~\ref{prop:expperiodicity} shows that the resulting series is well-defined and a solution of $y'=y$. In the next paragraphs we show that it coincides with a solution $\widetilde \exp_p$ of $y'=y$ that can be defined by completely different means. This other definition is less intuitive, but will prove to be more convenient for our calculations. Proposition \ref{prop:period}  shows that the two solutions agree.\\ 
 
We define the continuous $\F_p$-automorphism \[\sigma:\F_p\osb t\csb\to \F_p\osb t\csb,\quad t\mapsto \sum_{k=0}^\infty t^{p^k},\] and set recursively 
\[g_0\coloneqq \sigma(x)\quad \text{and} \quad g_{i+1}\coloneqq \sigma(g_i^pz_{i+1}).\] 
 
Further we define the polynomial 
\[H(t)\coloneqq \prod_{k=1}^{p-1}\left(1-\frac{t}{k}\right)^k, \text{ for $t$ a variable}\] 
and the series 
\[\widetilde{\exp}_p(x,z)\coloneqq \prod_{i=0}^\infty H\left((-1)^ig_i\right)\in \Rp.\] 
Note that $\widetilde{\exp}_p$ is well-defined as an element of $\Rp$, because $g_i\in x^{p^i}\F_p[z_1,z_2,\ldots]\osb x\csb$. Clearly $\widetilde{\exp}_{p}\vert_{x=0}=1$. To show that $\widetilde{\exp}_p$ is indeed a solution of $y'=y$, we resort to the logarithmic derivative of $H$. 
 
\begin{lem}\label{lem:magic} 
The polynomial $H$ is a solution of $(1-t^{p-1})y'=y$, say, has logarithmic derivative 
\[\frac{H(t)'}{H(t)} =\frac{1}{1-t^{p-1}}=\sum_{k=0}^\infty t^{k(p-1)}.\] 
In particular, for $\sigma$ and $g_i$ as defined above, 
\[ \frac{H(\sigma(t))'}{H(\sigma(t))} =\frac{\sigma(t)}{t} \quad  \text{and} \quad \frac{ H((-1)^ig_i)'}{H((-1)^ig_i)}=\frac{(-1)^ig_i}{xz_1\cdots z_{i}}.\] 
\end{lem}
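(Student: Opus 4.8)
The plan is to reduce everything to the single scalar identity $H(t)'/H(t)=1/(1-t^{p-1})$ in $\F_p(t)$, and then to deduce the two ``in particular'' statements by the chain rule together with elementary functional equations for $\sigma$ and for the $g_i$.

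For the scalar identity I would differentiate the product defining $H$ logarithmically. From $H(t)=\prod_{k=1}^{p-1}(1-t/k)^k$ one gets at once
\[\frac{H(t)'}{H(t)}=\sum_{k=1}^{p-1}k\cdot\frac{-1/k}{1-t/k}=\sum_{k=1}^{p-1}\frac{k}{t-k},\]
so it remains to check $\sum_{k=1}^{p-1}\frac{k}{t-k}=\frac{1}{1-t^{p-1}}$. For this I would use $t^{p-1}-1=\prod_{k=1}^{p-1}(t-k)$, both sides being monic of degree $p-1$ with root set $\F_p^\times$. Multiplying $\sum_k\frac{k}{t-k}$ by $1-t^{p-1}=-\prod_{k=1}^{p-1}(t-k)$ produces the polynomial $q(t)=-\sum_{k=1}^{p-1}k\prod_{j\neq k}(t-j)$ of degree at most $p-2$. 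Evaluating at $m\in\{1,\ldots,p-1\}$ kills every term except $k=m$; since $\prod_{j\neq m}(m-j)$ is the value at $t=m$ of $\frac{d}{dt}\prod_{j=1}^{p-1}(t-j)=\frac{d}{dt}(t^{p-1}-1)=(p-1)t^{p-2}=-t^{p-2}$, i.e.\ equals $-m^{-1}$ by Fermat, one finds $q(m)=-m\cdot(-m^{-1})=1$ for every such $m$. A polynomial of degree at most $p-2$ equal to $1$ at the $p-1$ points $1,\ldots,p-1$ is identically $1$, hence $(1-t^{p-1})H(t)'/H(t)=1$. The expansion $1/(1-t^{p-1})=\sum_{k\geq0}t^{k(p-1)}$ is just the geometric series.

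For the consequences I would isolate two facts: $\sigma(t)^p=\sigma(t)-t$ — equivalently $\bigl(\sum_{k\geq0}t^{p^k}\bigr)^p=\sum_{k\geq0}t^{p^k}-t$ — and that the derivative of any $p$-th power vanishes, so in particular $\frac{d}{dt}\sigma(t)=1$. For the $\sigma$-identity, the chain rule gives $\frac{H(\sigma(t))'}{H(\sigma(t))}=\frac{H'(\sigma(t))\cdot 1}{H(\sigma(t))}=\frac{1}{1-\sigma(t)^{p-1}}$, and $1-\sigma(t)^{p-1}=\frac{\sigma(t)-\sigma(t)^p}{\sigma(t)}=\frac{t}{\sigma(t)}$, which yields $\sigma(t)/t$. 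For the $g_i$-identity I would follow the recursion: writing $g_i=\sigma(u)$ with $u=g_{i-1}^pz_i$ for $i\geq1$ (and $g_0=\sigma(x)$, $u=x$, for $i=0$) — the substitution $t\mapsto u$ being legitimate because $u$ has positive $x$-order, so $\sigma(u)=\sum_{k\geq0}u^{p^k}\in\Rp$ — the same two facts give $1-g_i^{p-1}=u/g_i=g_{i-1}^pz_i/g_i$ and $g_i'=\partial\sigma(u)=\partial u=g_{i-1}^p\,\partial z_i=g_{i-1}^p/(xz_1\cdots z_{i-1})$ (respectively $g_0'=\partial x=1$). Since $(-1)^{p-1}=1$ we have $1-((-1)^ig_i)^{p-1}=1-g_i^{p-1}$, so the chain rule gives
\[\frac{H((-1)^ig_i)'}{H((-1)^ig_i)}=\frac{(-1)^ig_i'}{1-g_i^{p-1}}=\frac{(-1)^i\,g_{i-1}^p/(xz_1\cdots z_{i-1})}{g_{i-1}^pz_i/g_i}=\frac{(-1)^ig_i}{xz_1\cdots z_i},\]
the case $i=0$ being exactly the $\sigma$-computation at $t=x$.

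The only step that is not purely formal is the combinatorial identity $\sum_{k=1}^{p-1}\frac{k}{t-k}=\frac{1}{1-t^{p-1}}$, and the interpolation argument above keeps it computation-free, so I do not anticipate a real obstacle; the one point requiring care is simply that the substitutions $t\mapsto g_i$ (and $t\mapsto x$) into the $\sigma$-identities make sense in $\Rp$, which holds since each $g_i$ lies in $x^{p^i}\F_p[z_1,\ldots,z_i]\osb x\csb$.
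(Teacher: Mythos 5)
Your proof is correct and follows essentially the same route as the paper: logarithmic differentiation of the product, verification of the rational identity $H'/H=1/(1-t^{p-1})$, and then the chain rule combined with $\sigma(t)-\sigma(t)^p=t$ and the vanishing of derivatives of $p$-th powers. The only (minor) variation is that you establish the middle identity by evaluating $-\sum_k k\prod_{j\neq k}(t-j)$ at the points of $\F_p^\times$ and interpolating, whereas the paper compares the coefficients $c_0,c_1$ of $\prod_{k}(s+1-t/k)=(s+1)^{p-1}-t^{p-1}$; both are equally elementary and correct.
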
 
\begin{proof} 
By the additivity of the logarithmic derivative 
\[\frac{(fg)'}{fg}=\frac{f'}{f}+\frac{g'}{g},\] we have 
\begin{equation}\label{eq:logderH} 
\frac{H(t)'}{H(t)}=-\sum_{k=1}^{p-1} \frac{1}{1-\frac{1}{k}t} 
.\end{equation} 
Let $s$ be another variable and set \[F(s)=\prod_{k=1}^{p-1}\left (s+1-\frac{t}{k}\right)=\sum c_i(t)s^i.\] 
Using Fermat's Little Theorem we see that $F(s)=(s+1)^{p-1}-t^{p-1}$ as their zero sets agree. In particular, we have $c_0(t)=1-t^p$ and $c_1(t)=-1$. Thus, 
we further obtain, bringing \eqref{eq:logderH} to a common denominator, 
 
\[\frac{H(t)'}{H(t)}=-\frac{c_1(t)}{c_0(t)}=\frac{1}{1-t^{p-1}}.\] 
So for $H(\sigma(t))$ we obtain 
\[\frac{H(\sigma(t))'}{H(\sigma(t))}=\frac{\sigma(t)'}{1-\sigma(t)^{p-1}}=\frac{\sigma(t)}{\sigma(t)-\sigma(t)^p}=\frac{\sigma(t)}{t}. \] 
In this identity, setting $t=(-1)^{i}g_{i-1}^pz_i,$ i.e., $\sigma(t)=(-1)^ig_i$, we obtain \[\frac{1}{t}=\frac{z_i'}{z_i}=\frac{1}{xz_1\cdots z_i}\] and 
\[ 
\frac{H((-1)^ig_i)'}{H((-1)^ig_i)}=\frac{(-1)^{i}g_i}{xz_1\cdots z_i}. 
\qedhere \] 
\end{proof} 
 
\begin{rem} 
The identity 
\[\frac{H(t)'}{H(t)}=\frac{1}{1-t^{p-1}}\] 
can also be derived from 
\[\frac{H(t)'}{H(t)}=-\sum_{k=1}^{p-1} \frac{1}{1-\frac{1}{k}t} 
 =-\sum_{i=0 }^\infty t^i \sum_{k=1}^{p-1}\frac{1}{k^i}\] using the well-known fact 
\[\sum_{k=1}^{p-1}k^i \equiv \begin{cases} -1 & \text{if } i\equiv 0 \bmod p-1\\ 0 & \text{else} \end{cases}\quad \bmod p.\] 
The formula for the sum of the first $n$ of the $i$th powers $k^i$ in terms of Bernoulli numbers is called Faulhaber's formula, who computed the sums for the first 17 values of $i$ in \cite{Fau31} in the early $17$th century. The above-mentioned fact is an easy corollary. 
\end{rem} 
 
\begin{thm} \label{thm:expt} 
The series $\widetilde{\exp}_p(x,z)=\prod_{i=0}^\infty H\left((-1)^ig_i\right)\in \Sp$ is an exponential function in characteristic $p$, 
\[(\widetilde{\exp}_p)'=\widetilde{\exp}_p.\] 
\end{thm}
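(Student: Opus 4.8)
The plan is to compute the logarithmic derivative of the infinite product $\widetilde{\exp}_p = \prod_{i=0}^\infty H\bigl((-1)^i g_i\bigr)$ directly and show it equals $1$, which by $\widetilde{\exp}_p\vert_{x=0}=1\neq 0$ gives $(\widetilde{\exp}_p)' = \widetilde{\exp}_p$. The additivity of the logarithmic derivative, $\tfrac{(fg)'}{fg}=\tfrac{f'}{f}+\tfrac{g'}{g}$, already invoked in Lemma~\ref{lem:magic}, extends to the convergent infinite product since the factors $H((-1)^ig_i)$ lie in $1 + x^{p^i}\F_p[z_1,\ldots,z_i]\osb x\csb$ (because $g_i\in x^{p^i}\F_p[z_1,z_2,\ldots]\osb x\csb$ and $H(0)=1$), so each successive factor contributes only to higher and higher order in $x$; the partial products converge $x$-adically and one can differentiate term by term.

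Granting this, I would write
\[
\frac{(\widetilde{\exp}_p)'}{\widetilde{\exp}_p} = \sum_{i=0}^\infty \frac{H((-1)^ig_i)'}{H((-1)^ig_i)}.
\]
By the second identity in Lemma~\ref{lem:magic}, the $i$-th summand equals $\dfrac{(-1)^i g_i}{x z_1\cdots z_i}$. So the key computation is the telescoping-type identity
\[
\sum_{i=0}^\infty \frac{(-1)^i g_i}{xz_1\cdots z_i} = 1,
\]
where I read the $i=0$ term as $g_0/x = \sigma(x)/x$. The natural way to see this is to exhibit each term as a difference. Recall $g_{i} = \sigma(g_{i-1}^p z_i) = \sum_{k\ge 0}(g_{i-1}^p z_i)^{p^k}$, so in particular $g_i = g_{i-1}^p z_i + g_i^p$ — equivalently $g_i - g_i^p = g_{i-1}^p z_i$. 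Using $z_i' = \tfrac{1}{xz_1\cdots z_{i-1}}$ one checks that $\tfrac{g_{i-1}^p z_i}{x z_1\cdots z_i}= g_{i-1}^p z_i'/z_i\cdot\tfrac{1}{\,}$ — more cleanly, from $\tfrac{1}{t}=\tfrac{z_i'}{z_i}$ with $t=(-1)^i g_{i-1}^p z_i$ as in the proof of Lemma~\ref{lem:magic}, one gets $\tfrac{(-1)^i g_i}{xz_1\cdots z_i} = \tfrac{g_i}{g_{i-1}^p z_i}\cdot\tfrac{(-1)^i g_{i-1}^p}{x z_1\cdots z_{i-1}} = \tfrac{g_i}{g_i-g_i^p}\cdot\tfrac{(-1)^i g_{i-1}^p}{xz_1\cdots z_{i-1}}$, and since $\sigma$ satisfies $\sigma(u) = u + \sigma(u)^p$ one rewrites $\tfrac{\sigma(u)}{u}=\tfrac{1}{1-\sigma(u)^{p-1}}=\sum_{m\ge0}\sigma(u)^{m(p-1)}$. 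I expect the cleanest route is: let $A_i \coloneqq \dfrac{(-1)^i g_i^{\,}}{x z_1\cdots z_i}$ and show $A_i = B_i - B_{i+1}^{\,}$ for a suitable $B_i$ built from $g_i$, with $B_0 = 1$ and $B_i \to 0$ $x$-adically (since $B_i$ is divisible by an increasing power of $x$); then the sum telescopes to $B_0 = 1$.

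The main obstacle is pinning down the telescoping identity $A_i = B_i - B_{i+1}$ with the correct $B_i$ and the correct sign bookkeeping from the factors $(-1)^i$; the functional equation $\sigma(u) = u + \sigma(u)^p$, i.e. $g_i = g_{i-1}^p z_i + g_i^p$ for $i\ge 1$ and $g_0 = x + g_0^p$, together with $\tfrac1x, \tfrac{z_i'}{z_i} = \tfrac{1}{xz_1\cdots z_i}$, should make everything fall out, but one must be careful that these are identities of Laurent series in $\Rp$ and that all the indicated quotients genuinely lie in $\F_p[z_1,\ldots,z_i]\osb x\csb$ (they do, because $g_i$ is divisible by $x z_1\cdots z_i$ up to units — indeed $g_i\in x^{p^i} z_i\F_p[z_1,\ldots,z_i]\osb x\csb$ after the leading term, matching the claim in Theorem~\ref{thm:introalgtwo}). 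Once the algebra is arranged, convergence is routine from the $x$-adic order estimates, and membership $\widetilde{\exp}_p\in\Sp$ follows since each factor $H((-1)^ig_i)$, being a polynomial in $g_i$ and hence in $x^{p^i}w_i$ up to higher-order corrections, lies in $\Sp$ and $\Sp$ is closed under the $x$-adic topology.
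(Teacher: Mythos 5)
Your proposal follows essentially the same route as the paper: additivity of the logarithmic derivative over the $x$-adically convergent product, Lemma~\ref{lem:magic} to identify each summand with $\frac{(-1)^ig_i}{xz_1\cdots z_i}$, and a telescoping argument driven by $g_0=x+g_0^p$ and $g_{i}-g_i^p=g_{i-1}^pz_{i}$. The telescoping you are looking for is exactly the paper's inductive identity $\sum_{i=0}^k \frac{(-1)^ig_i}{xz_1\cdots z_i}=1+\frac{(-1)^kg_k^p}{xz_1\cdots z_k}$, i.e.\ your $B_{i+1}=(-1)^{i+1}g_i^p/(xz_1\cdots z_i)$, with the remainder tending to $0$ since $g_k\in x^{p^k}\F_p(z_1,z_2,\ldots)\osb x\csb$.
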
 
 
\begin{proof} 
By the additivity of the logarithmic derivative and Lemma~\ref{lem:magic} we have 
\[\frac{(\widetilde \exp_p)'}{\widetilde \exp_p}=\sum_{i=0}^\infty \frac{(-1)^i g_i}{xz_1\cdots z_i}.\] 
We will show inductively that 
\[\sum_{i=0}^k \frac{(-1)^i g_i}{xz_1\cdots z_i}= 1+\frac{(-1)^kg_k^p}{xz_1\cdots z_{k}}.\] 
Then, as $g_k\in x^{p^k}\F_p(z_1, z_2,\ldots )\osb x\csb,$ it will follow that 
\[\frac{(\widetilde \exp_p)'}{\widetilde \exp_p}-1 =\lim_{k\to \infty}\frac{(-1)^kg_k^p}{xz_1\cdots z_{k}} \in \bigcap_{k=0}^\infty x^{p^k-1}\F_p(z_1, z_2,\ldots)\osb x\csb=0.\] 
 
The induction is straightforward: For $k=0$, and using that $g_0^p=g_0-x$, the claim holds. Moreover, 
\[1+\frac{(-1)^kg_k^p}{xz_1\cdots z_{k}}+\frac{(-1)^{k+1}g_{k+1}}{xz_1\cdots z_{k+1}}=1+\frac{(-1)^{k+1}g_{k+1}^p}{xz_1\cdots z_{k+1}},\] 
as $g_k^pz_{k+1}=g_{k+1}-g_{k+1}^p.$ 
\end{proof} 
 
\begin{rem} 
The definition of $\exppt$ as an infinite product can be motivated as follows: We want to find an element of $\R_p$ whose logarithmic derivative is $1$. Lemma~\ref{lem:magic} shows that $H(\sigma(x))$ is a good approximation for such an element; its logarithmic derivative is $1+x^{p-1}+x^{p^2-1}+\ldots\in \frac{1}{x}\F_p\osb x^p\csb$. By the additivity of the logarithmic derivative, we search for a factor eliminating the error term, i.e., an element of $\R_p$ whose logarithmic derivative is $-(x^{p-1}+x^{p^2-1}+\ldots)$. For this, choose $t=-g_1$, the primitive of this error series. Then $H(t)$ gives by Lemma~\ref{lem:magic} again a good approximation. Iterating this process we obtain exactly the infinite product defining $\exppt$. 
\end{rem} 
\begin{cor} \label{cor:algexp} 
For all $k\in \N$, the projections $\pi_k\left(\widetilde \exp_p\right)\in \Rp^{(k)}$ of $\widetilde \expp$ are algebraic over $\F_p(x, z_1,\ldots, z_k)$. Further, for each $\alpha\in \N^{(\N)}$ the series $\widetilde \exp_p\in \F_p\orb x, z_1,\ldots\crb$ has an algebraic Laurent series coefficient of $z^\alpha$ in $\F\orb x\crb$. The same holds true for $\exp_p$ and any algebraic multiple of it. 
\end{cor}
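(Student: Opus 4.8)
The plan is to exploit the infinite product $\widetilde\exp_p=\prod_{i=0}^\infty H((-1)^ig_i)$ together with the vanishing valuation of the factors. First I would observe that each factor $H((-1)^ig_i)$ lies in $1+x^{p^i}\F_p[z_1,\dotsc,z_i]\osb x\csb$ and is algebraic over $\F_p(x,z_1,\dotsc,z_i)$: indeed $g_0=\sigma(x)$ is algebraic over $\F_p(x)$ since $\sigma(x)^p=\sigma(x)-x$, and inductively $g_{i+1}=\sigma(g_i^pz_{i+1})$ satisfies $g_{i+1}^p=g_{i+1}-g_i^pz_{i+1}$, so $g_{i+1}$ is algebraic over $\F_p(x,z_1,\dotsc,z_{i+1})$ (here one uses $g_i^p\in\Rp^{(i)}$ and Lemma~\ref{lem:algandproj}); applying the polynomial $H$ preserves algebraicity. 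Then, since $g_i\in x^{p^i}\F_p[z_1,z_2,\ldots]\osb x\csb$, the factor $H((-1)^ig_i)$ is congruent to $1$ modulo $x^{p^i}$, so only finitely many factors contribute to any fixed truncation in $x$.

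Next I would prove the first claim. Fix $k$; the factors $H((-1)^ig_i)$ for $i>k$ involve $z_{k+1},z_{k+2},\dotsc$, so applying $\pi_k$ to the product gives
\[\pi_k(\widetilde\exp_p)=\prod_{i=0}^k\pi_k\bigl(H((-1)^ig_i)\bigr)\cdot\prod_{i>k}\pi_k\bigl(H((-1)^ig_i)\bigr).\]
For $i\le k$ the factor already lies in $\Rp^{(k)}$ (since $g_i\in\Rp^{(i)}\subseteq\Rp^{(k)}$) and is algebraic over $\F_p(x,z_1,\dotsc,z_k)$, so $\pi_k$ fixes it. For $i>k$, setting $z_{k+1}=z_{k+2}=\dotsb=0$ kills $g_i$ (since $g_i$ is divisible by $z_{k+1}$, as $g_{k+1}=\sigma(g_k^pz_{k+1})$ and $z_{k+1}\mid g_{k+1}$, hence $z_{k+1}\mid g_i$ for all $i>k$ by the recursion), so $\pi_k(H((-1)^ig_i))=H(0)=1$. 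Therefore $\pi_k(\widetilde\exp_p)$ is a finite product of elements algebraic over $\F_p(x,z_1,\dotsc,z_k)$, hence algebraic.

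For the second claim—algebraicity of the Laurent coefficient of any fixed monomial $z^\alpha$ in $\F_p\orb x\crb$—I would choose $k$ large enough that $z^\alpha\in\F_p[z_1,\dotsc,z_k]$, note that this coefficient in $\widetilde\exp_p$ equals the corresponding coefficient in $\pi_k(\widetilde\exp_p)$ (setting further variables to zero does not affect monomials in $z_1,\dotsc,z_k$), and then invoke Lemma~\ref{lem:zcoeff} applied to the algebraic series $\pi_k(\widetilde\exp_p)\in\F_p[z_1,\dotsc,z_k]\osb x\csb$. Finally, to pass from $\widetilde\exp_p$ to $\exp_p$, apply Proposition~\ref{prop:monomially}: the singleton $\{\widetilde\exp_p\}$ is a $\Cp$-basis of solutions of $y'=y$ with algebraic projections, so the xeric basis $\{\exp_p\}$ has algebraic projections too; combined with Lemma~\ref{lem:algandproj} and Lemma~\ref{lem:zcoeff} this gives the statement for $\exp_p$, and for any $\Cp$-multiple $c\cdot\exp_p$ the projections and coefficients differ only by multiplication by the constant $c\in\Cp$, which does not affect algebraicity over $\F_p(x,z_1,\dotsc)$ since $\Cp$ consists of $p$th powers and hence of algebraic elements over itself—more precisely, one uses that $c$ lies in the base field appearing in the claim. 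The only mildly delicate point is making the interchange of $\pi_k$ with the infinite product rigorous, i.e., checking convergence in the $x$-adic topology is preserved under $\pi_k$; this follows because $\pi_k$ is continuous for that topology and the partial products stabilize modulo each power of $x$. I expect this convergence bookkeeping, rather than any algebraic input, to be the main thing to be careful about.
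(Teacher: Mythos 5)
Your proposal is correct and follows essentially the same route as the paper: the factors $H((-1)^ig_i)$ are algebraic by the Artin--Schreier-type relations for the $g_i$, the projection $\pi_k$ kills all factors with index $i>k$ so that $\pi_k(\widetilde\exp_p)$ is the finite (hence algebraic) partial product, and the statements for the $z^\alpha$-coefficients and for $\exp_p$ follow from Lemma~\ref{lem:zcoeff} and Proposition~\ref{prop:monomially}. Your extra care about interchanging $\pi_k$ with the $x$-adically convergent product is a point the paper leaves implicit but is handled correctly.
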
 
\begin{proof} 
The series $g_k\in \F_p[z_1,\ldots, z_k]\osb x\csb$ are algebraic. Indeed, $g_k$ satisfies $g_k^p-g_k=g_{k-1}^pz_k$ and by induction and the transitivity of algebraicity, the claim follows. Moreover, $g_k\in z_k\F_p[z_1,\ldots, z_k]\osb x\csb$, so one sees that 
\[\prod_{i=0}^k H((-1)^ig_i)=\pi_k\left(\widetilde \exp_p\right),\] 
where the left-hand side is algebraic. Hence all the partial products are algebraic and approximate $\widetilde \exp_p$. The rest follows from Proposition~\ref{prop:monomially} and Lemma~\ref{lem:zcoeff}. 
\end{proof} 
 
Write $\widetilde{\exp}_p=\widetilde{e}_0+\widetilde{e}_1x+\widetilde{e}_2x^2+\ldots$ with $\widetilde{e}_i\in \F_p(z_1,z_2,\ldots)$. These coefficients $\widetilde{e}_i$ of $\widetilde{\exp}_p$ have the following remarkable property, alluded to at the beginning of this section and which uniquely determines the function $\exppt$ as a solution of $y'=y$. 
 
\begin{prop} \label{prop:period} 
For all $n\in \N$ we have $\widetilde{e}_{p^n-1}=(-1)^nx w_n'$. 
\end{prop}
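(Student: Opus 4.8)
The plan is to extract the coefficient $\widetilde e_{p^n-1}$ directly from the infinite product definition $\widetilde{\exp}_p=\prod_{i=0}^\infty H((-1)^ig_i)$, exploiting the fact that the factors are sharply graded in $x$: since $g_i\in x^{p^i}\F_p[z_1,\dots,z_i]\osb x\csb$, the factor $H((-1)^ig_i)$ lies in $1+x^{p^i}\F_p[z_1,\dots,z_i]\osb x\csb$. First I would show that modulo $x^{p^n}$ only the factors with $i<n$ contribute, i.e.\ $\widetilde{\exp}_p \equiv \prod_{i=0}^{n-1} H((-1)^ig_i) \pmod{x^{p^n}}$, and even that the coefficient of $x^{p^n-1}$ is already determined by these finitely many factors together with the linear term of $H((-1)^ng_n)$, because the only way to reach degree $p^n-1$ while including a factor from $H((-1)^{n}g_n)$ or higher would require degree $\geq p^n$. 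So $\widetilde e_{p^n-1}$ is the coefficient of $x^{p^n-1}$ in the finite product $P_n \coloneqq \prod_{i=0}^{n} H((-1)^ig_i)$.

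The cleaner route, though, is to use the logarithmic-derivative identity already established. By the computation inside the proof of Theorem~\ref{thm:expt}, $\sum_{i=0}^{k}\frac{(-1)^ig_i}{xz_1\cdots z_i}=1+\frac{(-1)^kg_k^p}{xz_1\cdots z_k}$. Write $\widetilde{\exp}_p=\sum_j \widetilde e_j x^j$; the ODE $(\widetilde{\exp}_p)'=\widetilde{\exp}_p$ gives the recursion $(\widetilde e_i(z)x^i)'=\widetilde e_{i-1}(z)x^{i-1}$, equivalently $\widetilde e_{i-1}x^{i-1}=(\widetilde e_i x^i)'$. The idea is to identify, for indices $j$ in the range $p^{n-1}\le j\le p^n-1$, the homogeneous component $\widetilde e_j x^j$ with $(-1)^n(x^{p^n}w_n)^{(p^n-j)}$ — exactly the ansatz described in the paragraph preceding Lemma~\ref{lem:magic}. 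Concretely, I would prove by downward induction on $m=p^n-j$ (starting from $m=1$, where the claim is $\widetilde e_{p^n-1}x^{p^n-1}=(-1)^n(x^{p^n}w_n)'=(-1)^nxw_n'\cdot x^{p^n-1}$, using $(x^{p^n}w_n)'=x^{p^n-1}w_n'\cdot x$... up to the precise form $w_n'=z_1^{p^{n-1}-1}\cdots z_{n-1}^{p-1}$ from the displayed formula before Lemma~\ref{lem:derivationrules}) that $\widetilde e_{p^n-m}x^{p^n-m}=(-1)^n(x^{p^n}w_n)^{(m)}$, provided $p^n-m\ge p^{n-1}$, i.e.\ $m\le p^n-p^{n-1}$.

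The inductive step uses the ODE recursion together with $\big((x^{p^n}w_n)^{(m)}\big)'=(x^{p^n}w_n)^{(m+1)}$: applying $\partial$ to $\widetilde e_{p^n-m}x^{p^n-m}=(-1)^n(x^{p^n}w_n)^{(m)}$ yields $\widetilde e_{p^n-m-1}x^{p^n-m-1}=(-1)^n(x^{p^n}w_n)^{(m+1)}$, which is exactly the claim for $m+1$. So the whole block of coefficients $\widetilde e_j$ for $p^{n-1}\le j\le p^n-1$ is pinned down once we know the single ``anchor'' coefficient; and the natural anchor is the \emph{top} of the block, $\widetilde e_{p^n-1}$ itself. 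Thus the real content is the base case: that $\widetilde e_{p^n-1}=(-1)^nxw_n'$. For this I would return to the finite product $P_n$: since $g_i\in x^{p^i}\F_p[z_1,\dots,z_i]\osb x\csb$ and $g_i\in z_i\F_p[\dots]\osb x\csb$, the coefficient of $x^{p^n-1}$ in $P_n$ comes from picking the lowest-degree nontrivial term from exactly one factor and the constant term $1$ from the rest — except this only works if $p^n-1$ equals the valuation of some single $g_i$, which it does not in general. The honest argument: use Lemma~\ref{lem:derivationrules}(iii), which says that among all monomials $x^{\alpha_0}z_1^{\alpha_1}\cdots z_n^{\alpha_n}$, the $(p^n-1)$-st derivative is nonzero \emph{iff} every $\alpha_i\equiv-1\bmod p$; and (iv)/(ii), which say $\big((x^{p^n}w_n)'\big)^{(p^n-1)}=(-1)^n$. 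Apply $\partial^{p^n-1}$ to the identity $\widetilde{\exp}_p=P_n + (\text{terms of order}\ge p^n\ \text{in}\ x)$: on the left we get $(p^n-1)!\,\widetilde e_{p^n-1}+(\text{stuff of positive order})$ whose constant-in-$x$ part is $-\widetilde e_{p^n-1}$ (as $(p^n-1)!\equiv -1$); on the right, $\partial^{p^n-1}$ kills every monomial of $P_n$ except those $=(x^{p^n}w_n)'$ up to $\Cp$-multiples, and the surviving contribution is exactly $(-1)^n\,x\,w_n'$ after peeling off the $x$-order. Equating gives $\widetilde e_{p^n-1}=(-1)^n x w_n'$.

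The main obstacle I anticipate is the bookkeeping in that last step: one must argue carefully that when $\partial^{p^n-1}$ is applied to the full series $\widetilde{\exp}_p$, the \emph{only} monomials $x^kz^\alpha$ with $\partial^{p^n-1}(x^kz^\alpha)$ having a nonzero constant-in-$x$ term are those with $k=p^n-1$ and all $\alpha_i\equiv-1\bmod p$ (with $\alpha_i=0$ for $i>n$), so that higher-order tails of $\widetilde{\exp}_p$ genuinely do not interfere and we may legitimately replace $\widetilde{\exp}_p$ by $P_n$; this is where Lemma~\ref{lem:derivationrules}(i) and (iii) do the heavy lifting. Everything else — the downward induction propagating the anchor through the block via $\big((x^{p^n}w_n)^{(m)}\big)' = (x^{p^n}w_n)^{(m+1)}$, and the identification of $w_n'$ with the explicit monomial — is routine.
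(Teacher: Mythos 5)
Your final argument --- applying $\partial^{p^n-1}$ to $\widetilde{\exp}_p$ and comparing constant-in-$x$ parts --- breaks down on the left-hand side. You write $\partial^{p^n-1}(\widetilde e_{p^n-1}x^{p^n-1})=(p^n-1)!\,\widetilde e_{p^n-1}=-\widetilde e_{p^n-1}$, which treats $\widetilde e_{p^n-1}$ as a constant of the derivation; it is not, since $\partial$ acts nontrivially on the $z$-variables. Indeed the very statement you are proving gives $\widetilde e_{p^2-1}=x w_2'=z_1^{p-1}$, and $\partial(z_1^{p-1})=-z_1^{p-2}/x\neq 0$. (Also $(p^n-1)!\equiv 0\bmod p$ for $n\geq 2$, so even the formal Leibniz arithmetic is off.) What is actually true, by the recursion $(\widetilde e_i x^i)'=\widetilde e_{i-1}x^{i-1}$, is that $\partial^{p^n-1}(\widetilde e_{p^n-1}x^{p^n-1})=\widetilde e_0=1$: the constant term of $\partial^{p^n-1}\widetilde{\exp}_p$ is $1$ and carries no information about $\widetilde e_{p^n-1}$. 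Even if this were repaired, the method is structurally blind to any monomial $x^{p^n-1}z_1^{\beta_1}\cdots z_{n-1}^{\beta_{n-1}}$ with some $\beta_i\not\equiv -1\bmod p$, since such monomials are annihilated by $\partial^{p^n-1}$ (Lemma~\ref{lem:derivationrules}(iii)); so it could never show that $\widetilde e_{p^n-1}$ consists of the \emph{single} monomial $(-1)^n x w_n'$ and nothing else, which is precisely the content of the proposition.

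Your other two routes do not close this gap either. The downward induction via the ODE only propagates coefficients within a block starting from the anchor $\widetilde e_{p^n-1}$, which, as you concede, is the statement to be proved. Your first idea --- reading off the coefficient of $x^{p^n-1}$ from the finite product $\prod_{i=0}^{n}H((-1)^ig_i)$ --- is in fact the paper's approach, but you abandon it after correctly observing that the naive ``one nontrivial factor, constant terms elsewhere'' extraction fails. The missing ingredient is an induction peeling off one factor at a time: the paper proves $\widetilde e_{p^n-1}=(-1)^m\prod_{i=0}^{m-1}w_i^{p-1}\cdot[x^{p^n-p^m}]\bigl(\prod_{i\geq m}h_i\bigr)$ for $m=0,\dots,n$, where the induction step reduces the convolution sum $\sum_k [x^{kp^m-p^{m-1}}](h_{m-1})\cdot[x^{p^n-kp^m}]\bigl(\prod_{i\geq m}h_i\bigr)$ to the single term $k=1$ using the fact that $a_{p-1}=-1$ in the decomposition $H(t)=\sum_i a_i(t^p)t^i$ (Lemma~\ref{lem:coeffH}). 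Some such computation is indispensable; without it your base case is unproven and the proposition does not follow.
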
 
 
For the proof we need the following lemma describing certain coefficients of the polynomial $H(t)$: 
 
\begin{lem} \label{lem:coeffH} 
Write $H(t)=\sum_{i=0}^{p-1} a_i(t^p)t^i$. Then $a_{p-1}=-1$. 
\end{lem}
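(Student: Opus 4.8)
The statement asks for the coefficient of $t^{p-1}$ in the polynomial $H(t) = \prod_{k=1}^{p-1}(1-t/k)^k$, viewed through the decomposition $H(t) = \sum_{i=0}^{p-1} a_i(t^p)t^i$; concretely, since $\deg H = \sum_{k=1}^{p-1} k = \binom{p}{2} = \frac{p(p-1)}{2}$, and (for $p$ odd) the constant term of $a_{p-1}(t^p)$ is just the coefficient of $t^{p-1}$ in $H(t)$ itself, we must show $[t^{p-1}] H(t) = -1$. The plan is to read this coefficient off the logarithmic-derivative identity already established in Lemma~\ref{lem:magic}, namely $H'(t)/H(t) = \sum_{j\geq 0} t^{j(p-1)}$, equivalently $H'(t) = H(t)\cdot\sum_{j\geq 0} t^{j(p-1)}$ as formal power series (both sides lie in $\F_p\osb t\csb$, and $H$ is a unit there since $H(0)=1$).

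First I would write $H(t) = \sum_{m\geq 0} h_m t^m$ with $h_0 = 1$, so that $H'(t) = \sum_{m\geq 1} m\, h_m t^{m-1}$. Comparing the coefficient of $t^{p-2}$ on both sides of $H'(t) = H(t)\big(1 + t^{p-1} + t^{2(p-1)} + \cdots\big)$: on the left it is $(p-1)h_{p-1} = -h_{p-1}$; on the right, since $t^{p-1}$ and higher terms of the geometric series contribute nothing to degree $p-2 < p-1$, it is simply $h_{p-2}$, the coefficient of $t^{p-2}$ in $H(t)$. This gives the relation $-h_{p-1} = h_{p-2}$. To pin down $h_{p-1}$ I then compare the coefficient of $t^{p-1}$: the left side gives $p\, h_p = 0$ in characteristic $p$, while the right side gives $h_{p-1} + h_0\cdot 1 = h_{p-1} + 1$ (the extra $1$ coming from the $t^{p-1}$ term of the geometric series times $h_0=1$). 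Hence $h_{p-1} + 1 = 0$, i.e.\ $h_{p-1} = -1$, which is exactly the claim $a_{p-1} = -1$.

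The only thing to double-check is the bookkeeping identifying $a_{p-1}(t^p)$'s constant term with $[t^{p-1}]H(t)$: writing $H(t) = \sum_i a_i(t^p) t^i$ uniquely, the coefficient of $t^{p-1}$ in $H$ is precisely the constant term of $a_{p-1}$ (no other $a_i(t^p)t^i$ contributes a pure $t^{p-1}$ term, since $a_i(t^p)$ is a polynomial in $t^p$ and $p-1 \not\equiv i \bmod p$ unless $i = p-1$). So the lemma reduces cleanly to the formal power series comparison above. I do not expect any genuine obstacle here — the content is entirely the coefficient extraction from Lemma~\ref{lem:magic}; the mild subtlety is just remembering to use $p\,h_p = 0$ rather than trying to compute $h_p$ directly, and to separate the $j=0$ and $j=1$ contributions of the geometric series at degree $p-1$.
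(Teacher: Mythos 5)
There is a genuine gap: you prove only that the \emph{constant term} of $a_{p-1}$ equals $-1$, i.e.\ that $[t^{p-1}]H(t)=-1$, whereas the lemma asserts that the polynomial $a_{p-1}$ is \emph{identically} the constant $-1$. Since $\deg H = p(p-1)/2 \geq 2p-1$ for every $p\geq 5$, the polynomial $a_{p-1}$ may a priori have positive degree, so the lemma additionally claims $[t^{p-1+jp}]H(t)=0$ for all $j\geq 1$ (for $p=5$, for instance, one must also check that the coefficient of $t^{9}$ in $H$ vanishes -- it does, but not for free). This stronger statement is exactly what is used in the proof of Proposition~\ref{prop:period}, where the coefficient $[x^{kp^m-p^{m-1}}]\bigl(H((-1)^{m-1}w_{m-1}x^{p^{m-1}})\bigr)$ is claimed to vanish for all $k\neq 1$; your version would only control the case $k=1$. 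Your coefficient extraction at degree $p-1$ from $H'=H\cdot\sum_{j\geq0}t^{j(p-1)}$ is correct as far as it goes and recovers $a_{p-1}(0)=-1$ cleanly, but the analogous comparison at degree $p-1+jp$ only yields $0=\sum_{l\geq 0}h_{\,p-1+jp-l(p-1)}$, a relation mixing coefficients from several congruence classes mod $p$, so the vanishing of $h_{p-1+jp}$ does not drop out of the same computation.

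The paper closes this gap with a divisibility argument: rewriting Lemma~\ref{lem:magic} as $H=(1-t^{p-1})H'$ and comparing the components $a_i$ yields the recursion $a_i=(i+1)a_{i+1}-(i+2)a_{i+2}\cdot t^p$ together with $a_{p-1}=-a_1$, from which one deduces that $a_{p-1}$ divides every $a_i$ and hence divides $H$. Since $a_{p-1}(t^p)$ is a $p$-th power in $\overline{\F}_p[t]$ while $H=\prod_{k=1}^{p-1}(1-t/k)^k$ has no root of multiplicity $\geq p$, the polynomial $a_{p-1}$ must be a nonzero constant, and it is then identified as $-a_1(0)=-H'(0)=-1$. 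To complete your proof you would need to supply an argument of this kind, or some other direct proof that $[t^{p-1+jp}]H(t)=0$ for all $j\geq 1$.
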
 
\begin{proof} 
By Lemma~\ref{lem:magic} we have 
\[H(t)=(1-t^{p-1})H'(t).\] 
Comparing coefficients of powers of $t$ which are congruent to each other modulo $p$, one obtains the following recursion for the series $a_i$: 
\[a_i=(i+1)a_{i+1}-(i+2)a_{i+2}z^p \quad \text{for } i=0,\ldots, p-2 \quad \text{and} \quad a_{p-1}=-a_1.\] 
From this it follows that $a_{p-2}=-a_{p-1}$ and, inductively, that $a_i$ is divisible by $a_{p-1}$ for all $i$. Therefore $H(t)$ is divisible by $a_{p-1}$. Since $H(t)$ does not have a $p$-fold root, but $a_{p-1}\in \F_p[t^p]$, it follows that $a_{p-1}\in \F_p$ and $a_{p-1}=a_{p-1}(0)=-a_1(0)=-H'(0)=-1$. 
\end{proof} 
 
\begin{proof}[Proof of Proposition~\ref{prop:period}] 
Denote $h_i\coloneqq H((-1)^ig_i)$ and write $[x^k]f$ for the coefficient of $x^k$ in the Laurent series expansion of $f$. We show by induction 
\[\widetilde{e}_{p^n-1}=(-1)^m\prod_{i=0}^{m-1}w_i^{p-1}\cdot [x^{p^n-p^m}]\left(\prod_{i=m}^\infty h_i\right)\] 
for $m=0,\ldots, n$. For $m=0$ this is the definition of $\widetilde{e}_{p^n-1}.$ 
For the induction step we need to verify that for all $m$ we have 
\[ [x^{p^n-p^{m-1}}]\left(\prod_{i=m-1}^\infty h_i\right)=-w_{m-1}^{p-1}[x^{p^n-p^m}]\left(\prod_{i=m}^\infty h_i\right).\] 
Note that $g_i\in \F_p[z_1,z_2,\ldots]\osb x^{p^i}\csb $ and $g_i-w_ix^{p^i}\in \F_p[z_1, z_2,\ldots]\osb x^{p^{i+1}}\csb$. We see $\prod_{i=m}^\infty h_i\in \F_p[z_1,z_2,\ldots]\osb x^{p^m}\csb$ and 
\begin{equation} \label{eq:coeffrec} 
[x^{p^n-p^{m-1}}]\left(\prod_{i=m-1}^\infty h_i\right)=\sum_k[x^{kp^{m}-p^{m-1}}](h_{m-1})\cdot [x^{p^n-kp^m}]\left(\prod_{i=m}^\infty h_i\right). 
\end{equation} 
Now one easily checks by induction that \[h_{m-1}-H((-1)^{m-1}w_{m-1}x^{p^{m-1}})\in \F_p[z_1,z_2,\ldots]\osb x^{p^{m}}\csb\] and we obtain 
\[ [x^{kp^m-p^{m-1}}](h_{m-1})= [x^{kp^m-p^{m-1}}]\left(H((-1)^{m-1}w_{m-1}x^{p^{m-1}})\right),\] 
as the exponents considered are not multiples of $p^m$. 
Setting $s=(-1)^{m-1}w_{m-1}x^{p^{m-1}}$ in Lemma~\ref{lem:coeffH} we can further compute 
\[ [x^{kp^m-p^{m-1}}]\left(H((-1)^{m-1}w_{m-1}x^{p^{m-1}})\right)=\begin{cases} -((-1)^{m-1}w_{m-1})^{p-1} & \text{if } k=1\\ 0 & \text{otherwise,} \end{cases}\] 
which shows that the sum on the right-hand side of (\ref{eq:coeffrec}) only has one non-trivial summand, namely for $k=1$. This finishes the induction step. Now setting $m=n$ we obtain \[\widetilde{e}_{p^n-1}=(-1)^n\prod_{i=0}^{n-1}w_{i}^{p-1}=(-1)^nx  w_n'. \qedhere\] 
\end{proof} 
 
 
\section{The $p^k$-curvatures}\label{sec:pk-curvature} 
The theory developed in this section closely follows the results from \cite[\S 3.1.4, \S 3.2.1]{BCR23}. Let $L=\partial^n +a_{n-1}\partial^{n-1}+\ldots + a_1\partial +a_0 \in \F_p\orb x\crb [\partial]$ be a differential operator with rational functions, algebraic series or general power series as coefficients $a_i$. Rewrite the equation $Ly=0$ as a system of first order differential equations $Y'+AY=0$, where 
\[A=A_L\coloneqq \left(\begin{matrix} 
0 & -1 & 0 & \cdots & 0\\ 
0 & 0 & -1 & \cdots & 0\\ 
\vdots & \vdots & & \ddots & \vdots\\ 
0 & 0 & 0 &  & -1\\ 
a_0 & a_1 & a_2 &\ldots & a_{n-1} 
\end{matrix} \right)\] 
is the \textit{companion matrix} of $L$. The $p$-curvature of $L$ is defined as the map $(\partial+A)^p:\F_p\orb x \crb^n \to \F_p\orb x \crb^n$. It is an $\F_p\orb x \crb$-linear map (see Lemma \ref{lem:pcurvelin}) and plays an important role in the study of $Ly=0$. For example, its vanishing implies, for coefficients $a_i\in\F_p(x)$, the existence of a basis of $\F_p(x^p)$-linearly independent solutions in $\F_p(x)$ (see Cartier's Lemma, Proposition~\ref{prop:cartiersl}). More generally, the nilpotence of the $p$-curvature ensures various properties of $L$, for example that all its local exponents are contained in the prime field, see \cite[\S 5]{Hon81}. Defining the matrices $A_k\coloneqq(\partial+A)^k(I_n)$, one has $A_0=I_n, A_1=A$, with recursion formula $A_{k+1}=A_k'+AA_k$, and one checks that $A_p$ is the matrix of the $p$-curvature.\\ 
 
When considering solutions of differential equations in $\Rp$, i.e., with variables $z_i$ in their coefficients, the $p$-curvature itself do not control sufficiently the situation. At the same time, it is useful to quantize the order of nilpotence of the $p$-curvature. In analogy to the $p$-curvature we define, for any $k\geq 1$, the \textit{$p^k$-curvature} as the map  
\[(\partial+A_L)^{p^k}:\Rp^n\to \Rp^n.\] 
Thus, the $p^k$-curvature of $L$ vanishes for some $k$ if and only if the $p$-curvature of $L$ is nilpotent.\\ 
 
Recall that $\mathcal{R}_p^{(k)}\subseteq \Rp$ denotes $\F_p(z_1,\ldots, z_k)\orb x\crb$. 


\begin{lem} \label{lem:pcurvelin} 
The $p^k$-curvature of $L$ is an $\Rp^{(k-1)}$-linear map. Consequently, on $\left(\Rp^{(k-1)}\right)^n$, it is given by the evaluation $A_{p^k}\coloneqq(\partial +A)^{p^k}(I_n)\in \mathcal{M}_{n\times n}\left(\Rp^{(k-1)}\right)$. 
\end{lem}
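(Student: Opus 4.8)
The plan is to exploit the classical fact that the $p$-curvature operator $(\partial+A)^p$ is $\F_p\orb x\crb$-linear (Lemma~\ref{lem:pcurvelin}'s $k=1$ case, which is the standard characteristic-$p$ statement), and then push this through the iterated $p$-th power structure of $\partial$ on $\Rp$. The key observation is that on $\Rp$, the derivation $\partial$ has a subtle $p$-linearity behaviour: while $\partial^p$ is \emph{not} $\Rp$-linear (because of the $z_i$), the operator $\partial^{p^k}$ is linear over the subfield $\Rp^{(k-1)}=\F_p(z_1,\dots,z_{k-1})\orb x\crb$. Concretely, for $f\in\Rp^{(k-1)}$ one has $\partial^{p^k}(f)=0$: this follows from Lemma~\ref{lem:derivationrules}(i), since every element of $\F_p[x,z_1,\dots,z_{k-1}]$ is killed by $\partial^{p^k}$ (note the shift of index: $z_{k-1}$ requires only $\partial^{p^{k}}$, indeed $\partial^{p^k}$ kills $x^{\alpha_0}z_1^{\alpha_1}\cdots z_{k-1}^{\alpha_{k-1}}$ for $p^k\ge p^{(k-1)+1}$), and one extends to rational functions in $z_1,\dots,z_{k-1}$ by multiplying through by a $p$-th power of the denominator, which lies in $\Cp$ and is a constant, then to Laurent series in $x$ by linearity and continuity.

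First I would record the Hochschild-type identity: for any $\F_p$-algebra with derivation $D$ and any element $a$, the operator $(D+a)^{p^k}$ equals $D^{p^k} + (\text{multiplication by some element}) $ — more precisely, working in the twisted polynomial ring, $(D+a)^p = D^p + c_p(a)$ where $c_p(a)$ is the $p$-curvature of the scalar, and iterating, $(D+A)^{p^k} = (D + A_p')\cdots$ — rather than chasing the scalar case I would instead directly argue as in \cite[\S3.1.4]{BCR23}: the operator $(\partial+A)^{p^k}$, a priori an $\F_p$-linear (even additive) map $\Rp^n\to\Rp^n$, is $\Rp^{(k-1)}$-linear because for $f\in\Rp^{(k-1)}$ and $v\in\Rp^n$ one has, using the binomial-type expansion of $(\partial+A)^{p^k}(fv)$ in terms of $\binom{p^k}{j}\partial^{j}(f)\cdot(\partial+A)^{p^k-j}(v)$, that all binomial coefficients $\binom{p^k}{j}$ for $0<j<p^k$ vanish mod $p$ (Lucas), and the surviving $j=p^k$ term is $\partial^{p^k}(f)\cdot v = 0$ by the remark above, leaving only $f\cdot(\partial+A)^{p^k}(v)$. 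Here one must be slightly careful that the binomial expansion is valid in the noncommutative setting — it is, provided one uses the fact that multiplication by $f$ commutes with $A$ (entries of $A$ need not commute with... but they do, $\Rp$ is commutative) and interprets the cross terms correctly; the standard Leibniz rule $(\partial+A)^m(fv)=\sum_j\binom{m}{j}\partial^j(f)(\partial+A)^{m-j}(v)$ holds by induction on $m$.

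Then I would conclude the second sentence: once $\Rp^{(k-1)}$-linearity is established, restricting the map to $(\Rp^{(k-1)})^n$ and noting that $(\partial+A)^{p^k}$ maps this submodule into itself — which requires $\partial$ to preserve $\Rp^{(k-1)}$, true since $\partial z_i = 1/(xz_1\cdots z_{i-1})\in\Rp^{(i-1)}\subseteq\Rp^{(k-1)}$ for $i\le k-1$, hence $\partial(\Rp^{(k-1)})\subseteq\Rp^{(k-1)}$, and $A$ has entries in $\F_p\orb x\crb\subseteq\Rp^{(k-1)}$ — the map is an $\Rp^{(k-1)}$-module endomorphism of a free rank-$n$ module, hence given by right (or left) multiplication by its matrix on the standard basis, namely $A_{p^k}=(\partial+A)^{p^k}(I_n)$, with entries in $\Rp^{(k-1)}$ by the same invariance argument applied entrywise via the recursion $A_{m+1}=A_m'+AA_m$.

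The main obstacle I anticipate is verifying cleanly that $\partial^{p^k}$ annihilates all of $\Rp^{(k-1)}$, not merely the polynomial ring $\F_p[x,z_1,\dots,z_{k-1}]$: the passage to rational functions in the $z_i$ uses that $\partial^{p^k}(1/b)$ for $b\in\F_p[z_1,\dots,z_{k-1}]$ vanishes, which one sees by writing $1/b = b^{p-1}/b^p$ with $b^p\in\Cp$ a $\partial$-constant, so $\partial^{p^k}(b^{p-1}/b^p) = b^{-p}\partial^{p^k}(b^{p-1}) = 0$ since $b^{p-1}\in\F_p[z_1,\dots,z_{k-1}]$; and the extension to Laurent series in $x$ with such coefficients is immediate since $\partial$ lowers $x$-order by exactly one on each homogeneous piece and acts coefficient-wise-compatibly, so termwise vanishing suffices and convergence is not an issue. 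With these two ingredients the Leibniz-plus-Lucas computation is routine and the lemma follows exactly as in the $k=1$ case treated in \cite{BCR23}.
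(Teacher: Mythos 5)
Your proof is correct and follows essentially the same route as the paper: the Leibniz-type expansion of $(\partial+A)^{p^k}(fv)$ proved by induction, Lucas' theorem to kill the middle binomial coefficients, and the vanishing $\partial^{p^k}\bigl(\Rp^{(k-1)}\bigr)=0$ from Lemma~\ref{lem:derivationrules}(i). You in fact supply more detail than the paper does on the last point (the passage from monomials to rational functions in the $z_i$ and to Laurent series in $x$) and on the ``consequently'' clause, but the argument is the same.
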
 
 
\begin{proof} 
By induction one easily shows for any $v\in \left(\Rp^{(k-1)}\right)^n$ and $f \in \Rp^{(k-1)}$ the equation 
\[(\partial + A)^{m}(fv)= \sum_{j=0}^m \binom{m}{j}\partial^{m-j}(f) (\partial + A)^j(v).\] 
In particular, for $m=p^k$, only two of the binomial coefficients do not vanish modulo $p$ and we obtain 
\[(\partial + A)^{p^k}(fv)=f(\partial + A)^{p^k}(v)+\partial^{p^k}(f)v=f(\partial + A)^{p^k}(v),\] 
as $\partial^{p^k}\left(\Rp^{(k-1)}\right)=0.$ 
\end{proof} 

For the rest of the section we will also consider some differential operators $L$ in which we allow $z_i$-variables in the coefficients of the operator. Without loss of generality assume $L$ to be monic. We will require that the coefficients of $L$ are in $\Spk$ for some $k$, or, equivalently, that the companion matrix $A_L$ has entries in  $\Spk$. For regular singular differential operators $L=\partial-a$ of order one with $a\in \F_p\osb x\csb$, this corresponds to a restriction of coefficients from $x^{-1} \F_p \osb x\csb$ to $\Sp^{(0)}=\F_p\osb x\csb$. This can be done without loss of generality.  
Indeed, consider the equation 
\begin{equation*} 
y'+ ay=0. 
\end{equation*} 
for $a\in \F_p\orb x\crb$ and assume that it is regular singular, i.e., $a$ has a pole of order at most one at $0$. Write $L=\partial+a$ for the corresponding differential operator and assume that its initial form $L_0$ is an element of $\F_p[x][\partial]$, i.e., $a=\frac{\rho}{x}+\widetilde a$, where $\rho \in \F_p$ is its local exponent and $\widetilde a \in \F_p\osb x\csb$.  Replacing $y$ by $x^{-\rho}y$, the differential equation changes to 
\[ y'+\left(a-\frac{\rho}{x}\right) y=0,\] 
whose local exponent is $0$ and which has, equivalently, no singularity at $0$.\\ 
 
Recall that the elements $\widetilde{e}_i\in \F_p[z_1,z_2,\ldots]$ were defined as the coefficients of the solution  $\widetilde{\exp}_p=\widetilde{e}_0+\widetilde{e}_1x+\widetilde{e}_2x^2+\ldots$ of the exponential differential equation $y'=y$ in Section~\ref{sec:expp}. In particular, this means that $(\widetilde{e}_ix^i)'=\widetilde{e}_{i-1}x^{i-1}.$ 


\begin{lem} \label{lem:solution} 
Let $Y'+AY=0$ be a matrix differential equation without singularity at $x=0$, i.e., $A\in \mathcal{M}_{n\times n}(\F_p\osb x\csb)$, or, more generally, $A\in\mathcal{M}_{n\times n}(\Sp)$. Set $A_i=(\partial+A)^i(I_n)$. Then the matrix 
\begin{equation} \label{eq:solTaylor} 
Y\coloneqq \sum_{i=0}^\infty (-1)^i\widetilde{e}_ix^iA_i 
\end{equation} 
is an element of $\mathcal{M}_{n\times n}(\Sp)$ and a fundamental matrix of solution of $Y'+AY=0$. 
\end{lem}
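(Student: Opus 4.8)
The plan is to establish the three assertions of the lemma in order --- convergence of the series to a matrix over $\Sp$, the identity $Y'+AY=0$, and invertibility of $Y$; it suffices to treat the case $A\in\mathcal{M}_{n\times n}(\Sp)$, since $\F_p\osb x\csb\subseteq\Sp$.

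For the first point I would argue as follows. Since $\Sp$ is a differential ring, the recursion $A_{i+1}=A_i'+AA_i$ with $A_0=I_n$ shows inductively that $A_i\in\mathcal{M}_{n\times n}(\Sp)$ for every $i$. Moreover $\widetilde{e}_ix^i$ is the $x$-homogeneous component of degree $i$ of $\exppt=\sum_k\widetilde{e}_kx^k\in\Sp$ (\cref{thm:expt}); since the generators $x,x^pw_1,x^{p^2}w_2,\dots$ of $\Sp$ are $x$-homogeneous, the ring $\F_p[x,x^pw_1,\dots]$ is graded by $x$-degree, and for an $x$-adic limit the degree-$i$ part stabilizes, so the operation of extracting it maps $\Sp$ into $\Sp$; hence $\widetilde{e}_ix^i\in\Sp$. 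Each summand $(-1)^i\widetilde{e}_ix^iA_i$ therefore has entries in $\Sp$ of $x$-order at least $i$, the partial sums are Cauchy in the $x$-adic topology, and, as $\Rp$ is $x$-adically complete and $\Sp$ is closed in it, the limit $Y$ lies in $\mathcal{M}_{n\times n}(\Sp)$.

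For the differential equation I would differentiate the series termwise --- legitimate because $\partial$ lowers $x$-orders by at most one and the series converges $x$-adically. Using $(\widetilde{e}_ix^i)'=\widetilde{e}_{i-1}x^{i-1}$ for $i\geq1$ (the restatement of $\exppt'=\exppt$) and $(\widetilde{e}_0)'=0$, the product rule yields
\[Y'=\sum_{i\geq1}(-1)^i\widetilde{e}_{i-1}x^{i-1}A_i+\sum_{i\geq0}(-1)^i\widetilde{e}_ix^iA_i'.\]
Re-indexing the first sum by $j=i-1$ turns it into $-\sum_{j\geq0}(-1)^j\widetilde{e}_jx^jA_{j+1}$; substituting $A_{j+1}=A_j'+AA_j$ splits it as $-\sum_{j\geq0}(-1)^j\widetilde{e}_jx^jA_j'-A\sum_{j\geq0}(-1)^j\widetilde{e}_jx^jA_j$. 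The first of these two sums cancels the $A_i'$-sum above, and the second is $-AY$, so $Y'=-AY$.

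Finally, every summand with $i\geq1$ vanishes at $x=0$ and the $i=0$ term is $\widetilde{e}_0\, I_n=I_n$, so $Y\vert_{x=0}=I_n$; hence $\det Y$ has constant term $1$, is therefore a unit in the $x$-adically complete ring $\Sp$, and $Y\in\GL_n(\Sp)$. An invertible matrix solution is a fundamental matrix, which finishes the proof. The only genuinely delicate point in this argument is the bookkeeping that keeps every intermediate object inside $\Sp$ rather than only in $\Rp$; once the two identities $(\widetilde{e}_ix^i)'=\widetilde{e}_{i-1}x^{i-1}$ and $A_{i+1}=A_i'+AA_i$ are available, the equation $Y'=-AY$ itself is a short formal manipulation, essentially a Leibniz expansion combined with an index shift.
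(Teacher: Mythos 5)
Your proposal is correct and follows essentially the same route as the paper: the paper likewise deduces $A_i\in\mathcal{M}_{n\times n}(\Sp)$ from $\Sp$ being differentially closed, notes $\ord_x(\widetilde{e}_ix^iA_i)\geq i$ for convergence, and then performs the identical product-rule/re-indexing computation with $(\widetilde{e}_ix^i)'=\widetilde{e}_{i-1}x^{i-1}$ and $A_{i+1}=A_i'+AA_i$ to get $Y'=-AY$, with invertibility obtained from $Y\vert_{x=0}=I_n$ immediately after the proof. Your extra remarks (the grading argument showing $\widetilde{e}_ix^i\in\Sp$ and the justification of termwise differentiation) only make explicit points the paper leaves implicit.
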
 
 
\begin{rem} 
The choice of a particular exponential function does not matter here. More precisely, one might replace $\widetilde {e_i}$ by the coefficients of the power series expansion of any other exponential function, i.e., any solution of $y'=y$ in $\Sp$, e.g., the coefficients $e_i$ of the xeric solution $\exp_p$. 
\end{rem} 
 
\begin{proof}[Proof of Lemma~\ref{lem:solution}] 
The first assertion is easy to see: If $A\in \Sp$,  then $A_i\in \Sp$ for all $i$, as $\Sp$ is a differentially closed ring. Thus $\ord_x \left(\widetilde{e}_ix^iA_i\right)\geq i,$ so the sum is a well-defined element of $\mathcal{M}_{n\times n}(\Sp)$.\\ 
 
For the second part we compute using the product rule 
\begin{align*} 
Y'&= \sum_{i=0}^\infty (-1)^i\widetilde{e}_ix^iA_i'+ \sum_{i=1}^\infty (-1)^i\widetilde{e}_{i-1}x^{i-1}A_i\\ 
&=\sum_{i=0}^\infty (-1)^i\widetilde{e}_ix^i(A_i'-A_{i+1})\\ 
&=\sum_{i=0}^\infty (-1)^i\widetilde{e}_ix^i(-AA_i) =-AY. \qedhere 
\end{align*} 
\end{proof} 
 
Now setting $x=0$ in $Y$, we obtain $\det Y(0)= \det I_n=1$, so $Y(0)\in \GL_n(\F_p)$ and $Y\in \GL_n(\Rp)$. 
 
\begin{rem} 
This Lemma gives another verification of the fact that any non-singular differential equation has a full basis of solutions in $\R_p$. 
\end{rem} 
 
The following proposition generalizes Cartier's Lemma: It relates the vanishing of the $p$-curvature to the existence of solutions of differential equations in positive characteristic $p$, see \cite{Kat72}, respectively, Thm.~3.18 in \cite{BCR23}. \\
 
 
\newcommand\Tp{\mathcal{T}_p} 
 
\begin{prop}[Extension of Cartier's Lemma]\label{prop:cartiersl} Let $L$ be a monic differential operator in $\Spk[\partial]$ and let $\partial+A$ be the corresponding first order matrix differential operator with $p^{k+1}$-curvature $A_{p^{k+1}}$. 
 
Denote by $\Tp^{(k)}$ one of the fields $\F_p(x, z_1,\ldots, z_k)$, $\R _{p,\text{alg}}^{(k)}$, or $\Rp^{(k)}$, where the second denotes the subfield of $\Rp^{(k)}$ of elements algebraic over $\F_p(x, z_1,\ldots, z_k)$. Assume that $L$ has coefficients in $\Tp^{(k)}$. Then the following are equivalent:
 
\begin{enumerate}[(i)] 
\item  The equation $Ly=0$ has a full basis of solutions in $\Tp^{(k)}$. 
\item  The $p^{k+1}$-curvature of $L$ vanishes, $A_{p^{k+1}}=0$. 
\item  The operator $\partial^{p^{k+1}}$ is right-divisible by $L$ in $\Tp^{(k)}[\partial]$. 
\end{enumerate}

\end{prop}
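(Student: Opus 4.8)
The plan is to prove the cyclic chain of implications $(i)\Rightarrow(iii)\Rightarrow(ii)\Rightarrow(i)$, following the classical scheme for Cartier's Lemma but with $\partial^{p^{k+1}}$ in place of $\partial^p$ and exploiting that, by Lemma~\ref{lem:pcurvelin}, the $p^{k+1}$-curvature is $\Tp^{(k)}$-linear (using $\Tp^{(k)}\subseteq\Rp^{(k)}$ and $\partial^{p^{k+1}}$ kills $\Rp^{(k)}$). The algebraic backbone is the Ore-division formalism in the skew polynomial ring $\Tp^{(k)}[\partial]$: since $L$ is monic, right-division by $L$ makes sense, and $\partial^{p^{k+1}} = QL + R$ with $\deg R<n$; the point is to identify $R$ with the $p^{k+1}$-curvature and $Q$ with a division witness.

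First I would show $(i)\Rightarrow(iii)$. If $Ly=0$ has a full basis of solutions $y_1,\dots,y_n$ in $\Tp^{(k)}$, then since $\partial^{p^{k+1}}$ annihilates every element of $\Rp^{(k)}\supseteq\Tp^{(k)}$ (this is the defining property of the constant field, cf.\ Lemma~\ref{lem:derivationrules}(i) applied coefficientwise), each $y_j$ is also a solution of $\partial^{p^{k+1}}y=0$. Writing $\partial^{p^{k+1}}=QL+R$ with $R\in\Tp^{(k)}[\partial]$ of order $<n$, we get $Ry_j=0$ for all $j$; but a nonzero operator of order $<n$ cannot kill $n$ solutions that are linearly independent over the constants $\Cp\cap\Tp^{(k)}$ — this is the Wronskian/linear-independence argument, which works verbatim in characteristic $p$ over the differential field $\Tp^{(k)}$ because the $y_j$ are independent over constants. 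Hence $R=0$, i.e.\ $L$ right-divides $\partial^{p^{k+1}}$.

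Next, $(iii)\Rightarrow(ii)$: the standard dictionary between operators and companion systems says that, for a monic $L$ of order $n$ with companion matrix $A$, the matrix $A_m=(\partial+A)^m(I_n)$ encodes the remainder of $\partial^m$ modulo $L$ on the right; more precisely the first row (or the appropriate coordinate, as in \cite[\S3]{BCR23}) of $A_m$ gives the coefficients of $\partial^m \bmod L$. Thus $L\mid_r\partial^{p^{k+1}}$ forces $A_{p^{k+1}}=0$. I would make this precise by the same induction used in Lemma~\ref{lem:pcurvelin}, tracking how $(\partial+A)$ acts on the standard basis. For $(ii)\Rightarrow(i)$: when $A_{p^{k+1}}=0$, the $p^{k+1}$-curvature operator $(\partial+A)^{p^{k+1}}$ is identically zero on $(\Tp^{(k)})^n$. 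Now invoke Lemma~\ref{lem:solution}: the fundamental solution matrix $Y=\sum_i(-1)^i\widetilde e_i x^i A_i$ is built from the $A_i$, and once $A_{p^{k+1}}=0$ the recursion $A_{i+1}=A_i'+AA_i$ together with $\widetilde e$-periodicity (Proposition~\ref{prop:period}, which controls the coefficients $\widetilde e_{p^n-1}$) shows the sum collapses to a \emph{finite} sum with entries in $\Tp^{(k)}$ — here is where I'd have to be careful that finiteness of the $A_i$-data plus $\partial^{p^{k+1}}$-constancy of the coefficients keeps everything in $\Tp^{(k)}$ rather than merely in $\Sp$. Then $Y\in\GL_n(\Tp^{(k)})$ (its value at $x=0$ is $I_n$), and its columns furnish the desired basis of solutions of $Ly=0$ in $\Tp^{(k)}$.

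The main obstacle I anticipate is the $(ii)\Rightarrow(i)$ direction over the restricted fields $\F_p(x,z_1,\dots,z_k)$ or $\R_{p,\mathrm{alg}}^{(k)}$: vanishing of the $p^{k+1}$-curvature gives a priori only that the solutions lie in $\Rp^{(k)}$ (via Lemma~\ref{lem:solution}), and one must argue they are in fact rational, respectively algebraic. The clean way is to note that $A_{p^{k+1}}=0$ makes the sequence $A_0,\dots,A_{p^{k+1}-1}$ a "full" $\partial$-invariant package: the map $Y\mapsto Y' + AY$ on the $\Tp^{(k)}$-span of these matrices is nilpotent-free, so solving $Y'=-AY$ with $Y(0)=I_n$ amounts to inverting a finite triangular linear system over $\Tp^{(k)}$ — no new transcendental or non-algebraic elements are introduced. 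Equivalently, $(iii)$ already lives in $\Tp^{(k)}[\partial]$, and $\partial^{p^{k+1}}$ has its full solution space $\{c_0 + c_1 x + \dots\}$ spanned by $1,x,\dots$ with $c_i$ ranging over the constant subfield of $\Tp^{(k)}$; right-divisibility by $L$ then exhibits the solution space of $L$ as a $\partial$-submodule of this, automatically inside $\Tp^{(k)}$. I would present the argument in this last form, since it treats the three choices of $\Tp^{(k)}$ uniformly and sidesteps having to re-prove algebraicity by hand.
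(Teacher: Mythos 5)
Your proposal follows essentially the same strategy as the paper: Ore division $\partial^{p^{k+1}}=QL+R$ combined with the bound that an operator of order $<n$ cannot annihilate $n$ solutions independent over the constants (for the divisibility statement), and the explicit fundamental matrix of Lemma~\ref{lem:solution}, which truncates to a finite sum once $A_{p^{k+1}}=0$ (for the curvature statement). The one genuinely different sub-step is your $(iii)\Rightarrow(ii)$ via the companion-matrix dictionary ($A_m$ encoding $\partial^m\bmod L$); the paper instead proves $(i)\Rightarrow(ii)$ directly, computing $A_{p^{k+1}}=(\partial+A)^{p^{k+1}}(I_n)$ against a fundamental matrix $Y\in\GL_n(\Tp^{(k)})$ and using the $\Rp^{(k)}$-linearity of the $p^{k+1}$-curvature from Lemma~\ref{lem:pcurvelin}. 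Both routes work, but yours needs the dictionary spelled out, including why right-divisibility of $\partial^{p^{k+1}}$ kills \emph{all} columns of $A_{p^{k+1}}$ and not just the one corresponding to $1\bmod L$ (this follows from $\partial^{p^{k+1}+j}=\partial^{j}QL$).

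Two points need repair. First, in $(ii)\Rightarrow(i)$ the appeal to Proposition~\ref{prop:period} is a red herring: all that is needed is that $A_i=0$ for $i\geq p^{k+1}$ (immediate from $A_{i+1}=A_i'+AA_i$) and that $\widetilde e_ix^i\in\F_p[x,z_1,\ldots,z_k]$ for $i<p^{k+1}$ (because $\exppt\in\Sp$, so terms of $x$-order below $p^{k+1}$ involve only $w_1,\ldots,w_k$); then $Y$ is a finite $\F_p[x,z_1,\ldots,z_k]$-linear combination of the $A_i$ and lies in $\mathcal{M}_{n\times n}(\Tp^{(k)})$ for each of the three choices of $\Tp^{(k)}$, with $Y(0)=I_n$. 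Second, and more seriously, your preferred closing argument --- that right-divisibility places the solution space of $L$ ``automatically inside $\Tp^{(k)}$'' --- is false as stated: the kernel of $\partial^{p^{k+1}}$ in $\Rp$ is strictly larger than $\Tp^{(k)}$, since any solution may be rescaled by constants in $\Cp\setminus\Tp^{(k)}$ (for instance by $z_{k+1}^p$), so nothing is automatic. What the paper actually does for $(iii)\Rightarrow(i)$ is a dimension count over the constants of $\Tp^{(k)}$: the kernel of $\partial^{p^{k+1}}$ restricted to $\Tp^{(k)}$ is all of $\Tp^{(k)}$, of dimension $p^{k+1}$, while $\dim\Ker Q+\dim\Ker L\leq(p^{k+1}-n)+n$, forcing $\dim\Ker L=n$. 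Moreover, if you present only this form of the argument in place of the Lemma~\ref{lem:solution} computation, you must upgrade your dictionary to an equivalence $(ii)\Leftrightarrow(iii)$; otherwise $(ii)$ is only ever a target in your implication graph and the cycle does not close.
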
 
 
\begin{proof} 
The proof for all three assertions works analogously, we do it for $\Tp^{(k)}=\F_p(x, z_1,\ldots, z_k)$. Assume (i) holds. There then exists a fundamental matrix $Y\in \GL_n(\F_p(z_1,\ldots, z_k,x))$ of solutions of $(\partial+A)Y=0$.  Then 
\[A_{p^{k+1}}= (\partial+A)^{p^{k+1}}(I_n)=Y^{-1}L^{p^{k+1}}(Y)=0,\] 
where we have used the linearity of the $p^{k+1}$-curvature on $\Rpk$, see Lemma~\ref{lem:pcurvelin}. This shows (ii). For the converse implication consider the fundamental matrix of solutions $Y$ of $(\partial+A_L)Y=0$ given in Lemma~\ref{lem:solution}, Equation~\eqref{eq:solTaylor}. As $A_{p^{k+1}}=L^{p^{k+1}}(I_n)=0$ this is a finite sum of rational functions in the variables $x, z_1,\ldots, z_k$, which proves (i).\\ 
 
For the equivalence of (i) and (iii) we use that $\F_p(x, z_1,\ldots, z_k)[\partial]$ is a (left- and right-) Euclidean ring. In fact, the skew-polynomial ring over any (skew-) field is Euclidean, as observed by Ore \cite{Ore33}. Thus we may write $\partial^{p^{k+1}}=QL+R$ for some differential operators $R, Q$, with $\ord\, R<\ord\, L =n$. If $y\in \F_p(x, z_1,\ldots, z_k)$ is a solution of $Ly=0$, then $Ry=\partial^{p^{k+1}}y-QLy=0$, and consequently each solution of $Ly=0$ is also a solution of $Ry=0$. Because the solutions of $L$ form an $n$-dimensional vector space over the constants by assumption and $R$ is of order smaller than $n$, this means $R=0$. Thus (iii) follows. 
 
Conversely, assume that $\partial^{p^{k+1}}=QL$. The kernel of $\partial^{p^{k+1}}$ is $\F_p(x, z_1,\ldots, z_k)$ and thus a $p^{k+1}$-dimensional $\F_p(x^p, z_1^p,\ldots, z_k^p)$-vector space. The $\F_p(x^p, z_1^p,\ldots, z_k^p)$-dimensions of the kernels of $Q$, respectively, $L$ in $\F_p(x, z_1,\ldots, z_k)$ are at most $p^{k+1}-n$, respectively, $n$, thus equality must hold. 
\end{proof} 
 
We conclude the section with an explicit formula for the $p^k$-curvature of order one equations. It is a generalization of the formula $a_p=a^p+a^{(p-1)}$ for the $p$-curvature for first order equations $y'+ay=0$ with rational function coefficients \cite[Thm.~3.12]{BCR23}. It seems to have no obvious extension to higher order differential equations. 


\begin{prop}\label{prop:pcurveformula} 
Let $L=(\partial+a)$ be a differential operator with $a\in \Rp$. The $p^k$-curvature $a_{p^k}\coloneqq L^{p^k}(1)$ is given by: 
\begin{equation*} 
a_{p^k}=\sum_{i=0}^k\left(a^{(p^i-1)}\right)^{p^{k-i}}=\left(a_{p^{k-1}}\right)^p+a^{(p^k-1)}. 
\end{equation*} 
\end{prop}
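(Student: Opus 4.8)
The plan is to prove the two stated identities by induction on $k$. The second identity, $a_{p^k} = (a_{p^{k-1}})^p + a^{(p^k-1)}$, should follow from the linearity of the $p^k$-curvature established in \cref{lem:pcurvelin} together with the factorization $\partial^{p^k} = (\partial^{p^{k-1}})^{\circ p}$ — more precisely, from the operator identity $L^{p^k} = (L^{p^{k-1}})^{p}$ interpreted appropriately, or directly by writing $(\partial + a)^{p^k} = \big((\partial + a)^{p^{k-1}}\big)^{p}$ and expanding the inner operator as $\partial^{p^{k-1}} + (\text{multiplication by } a_{p^{k-1}})$ when acting on $1$; here one uses that, by the argument in \cref{lem:pcurvelin}, raising $(\partial + b)$ to the $p$-th power and evaluating at a constant gives $b^p + b^{(p-1)}$, applied with $b = a_{p^{k-1}}$ together with the fact that $\partial^{p^{k-1}}$ annihilates $1$. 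This would give $a_{p^k} = (a_{p^{k-1}})^p + (a_{p^{k-1}})^{(p-1)}$, so I also need $(a_{p^{k-1}})^{(p-1)} = a^{(p^k-1)}$, which itself should come out of the induction.

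For the first identity, the base case $k=0$ reads $a_{1} = a^{(0)} = a$, which is immediate since $L(1) = a$. For $k=1$, it is the classical formula $a_p = a^p + a^{(p-1)}$ recalled from \cite[Thm.~3.12]{BCR23}. For the inductive step, assuming $a_{p^{k-1}} = \sum_{i=0}^{k-1}\big(a^{(p^i-1)}\big)^{p^{k-1-i}}$, I raise both sides to the $p$-th power, which gives $\big(a_{p^{k-1}}\big)^p = \sum_{i=0}^{k-1}\big(a^{(p^i-1)}\big)^{p^{k-i}}$ (the $p$-th power of a sum is the sum of $p$-th powers in characteristic $p$), and then I add $a^{(p^k-1)} = \big(a^{(p^k-1)}\big)^{p^{k-k}}$, the $i=k$ term. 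Combined with the second identity $a_{p^k} = (a_{p^{k-1}})^p + a^{(p^k-1)}$, this yields exactly $a_{p^k} = \sum_{i=0}^{k}\big(a^{(p^i-1)}\big)^{p^{k-i}}$.

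The genuine content therefore collapses to proving the recursion $a_{p^k} = (a_{p^{k-1}})^p + a^{(p^k-1)}$ cleanly, and within that, the key lemma-flavored step $(a_{p^{k-1}})^{(p-1)} = a^{(p^k-1)}$. I expect to get the latter by differentiating the inductive formula $p-1$ times: $\big(\sum_{i=0}^{k-1}(a^{(p^i-1)})^{p^{k-1-i}}\big)^{(p-1)}$, and here Frobenius helps enormously, because for each $i \le k-2$ the term $(a^{(p^i-1)})^{p^{k-1-i}}$ is a $p$-th power (indeed a $p^2$-th power since $k-1-i \ge 2$... no, only $k-1-i\ge 1$, a $p$-th power suffices), and a $p$-th power $g^p$ satisfies $(g^p)' = 0$, hence all its derivatives of positive order vanish; so only the $i = k-1$ term survives, giving $(a^{(p^{k-1}-1)})^{(p-1)} = a^{(p^{k-1}-1+p-1)}$, and here I need $p^{k-1} - 1 + p - 1 = p^k - 1$, which is false in general — so this naive differentiation does not work and I must instead derive the recursion directly from the operator identity $(\partial+a)^{p^k} = \big((\partial+a)^{p^{k-1}}\big)^{p}$ and \cref{lem:pcurvelin}.

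The main obstacle, then, is to justify the operator-level identity $(\partial + a)^{p^k}(1) = \big((\partial + a)^{p^{k-1}}\big)^{p}(1)$ and to evaluate the right-hand side correctly; the subtlety is that $(\partial + a)^{p^{k-1}}$ is \emph{not} of the form $\partial^{p^{k-1}} + (\text{mult. by } a_{p^{k-1}})$ as an operator, only its value on constants is $a_{p^{k-1}}$. I would handle this by writing $(\partial+a)^{p^{k-1}} = \partial^{p^{k-1}} + B$ where $B$ is a differential operator of order $< p^{k-1}$ with $B(1) = a_{p^{k-1}}$, and then computing $\big(\partial^{p^{k-1}} + B\big)^{p}$ modulo the left ideal generated by things annihilating $1$: since $\partial^{p^{k-1}}$ commutes with nothing obvious, I instead use \cref{lem:pcurvelin} directly with $m = p^k$ but after one factorization, i.e., view the $p^k$-curvature as $\big((\partial+a)^{p^{k-1}}\big)^{p}$ acting on $\Rp^{(k-1)}$-module structure, where by \cref{lem:pcurvelin} the inner operator acts as scalar multiplication by $a_{p^{k-1}} \in \Rp^{(k-1)}$ followed by... this is exactly the place I expect to spend real effort, and the resolution is that on the relevant module the inner operator $(\partial+a)^{p^{k-1}}$ equals $\partial^{p^{k-1}} + a_{p^{k-1}}$ as operators (not just on constants) because the cross terms all vanish — this is essentially the $m = p^{k-1}$ case of the expansion in the proof of \cref{lem:pcurvelin} — and then $\big(\partial^{p^{k-1}} + a_{p^{k-1}}\big)^{p}(1)$ unwinds to $(a_{p^{k-1}})^p + (a_{p^{k-1}})^{(p^{k-1}-1)} \cdots$; I will need to chase this carefully, and a direct induction on the defining recursion $A_{k+1} = A_k' + a A_k$ may ultimately be cleaner than the operator-factorization route.
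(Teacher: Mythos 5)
Your plan diverges genuinely from the paper's proof, which never uses the recursion as its engine: the paper expands $a_m=(\partial+a)^m(1)$ as an explicit sum $\sum_\alpha \lambda_\alpha \prod_j (a^{(j)})^{\alpha_j}$ over compositions $\alpha\in\mathcal{A}_m$, computes the multinomial coefficients $\lambda_\alpha$ in closed form, and then applies Lucas' theorem to show that for $m=p^k$ only the $k+1$ terms $\alpha=p^{k-\ell}\epsilon_{p^\ell-1}$ survive, each with coefficient $1$. Your route — establish the recursion $a_{p^k}=(a_{p^{k-1}})^p+a^{(p^k-1)}$ from the factorization $(\partial+a)^{p^k}=\bigl((\partial+a)^{p^{k-1}}\bigr)^p$ and then sum it up — is a legitimate alternative and would arguably be shorter, but as written it has a real gap: you never actually establish the recursion, and the one concrete formula you offer for the unwinding, $(a_{p^{k-1}})^p+(a_{p^{k-1}})^{(p^{k-1}-1)}$, is wrong (and you leave it trailing off with an ellipsis). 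Your earlier attempt with $p-1$ derivatives also fails, as you yourself note, and you do not repair it.

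Here is what is missing. The binomial expansion in the proof of Lemma~\ref{lem:pcurvelin}, taken with $v=1$, gives the \emph{operator} identity $(\partial+a)^{p^j}(f)=\partial^{p^j}(f)+a_{p^j}\cdot f$ on all of $\Rp$, so the inner operator really is $D+b$ with $D=\partial^{p^{k-1}}$ and $b=a_{p^{k-1}}$ — that part of your sketch is fine. The two facts you still need are: first, that $D=\partial^{p^{k-1}}$ is itself a derivation in characteristic $p$ (so that the order-one formula $(D+b)^p(1)=b^p+D^{p-1}(b)$ — i.e.\ the $k=1$ case of the proposition, which must be proved for an arbitrary derivation, not just $\partial$ — applies to it); and second, the correct derivative count: $D^{p-1}=\partial^{\,p^k-p^{k-1}}$, not $\partial^{\,p-1}$ and not $\partial^{\,p^{k-1}-1}$. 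With that count, the induction hypothesis $a_{p^{k-1}}=\sum_{i=0}^{k-1}\bigl(a^{(p^i-1)}\bigr)^{p^{k-1-i}}$ does close the loop: every term with $i<k-1$ is a $p$\,th power and is annihilated by $D^{p-1}$, and the surviving term gives $\bigl(a^{(p^{k-1}-1)}\bigr)^{(p^k-p^{k-1})}=a^{(p^{k-1}-1+p^k-p^{k-1})}=a^{(p^k-1)}$, exactly the identity your $(p-1)$-fold differentiation failed to produce. Without these two ingredients the recursion, and hence the whole proof, is not established.
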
 
 
\begin{proof} 
Write $a_m$ for $(\partial+a)^m(1)$. First note that $a_m$ can be written as 
\begin{equation}\label{eq:am} 
a_m=\sum_{\alpha \in \mathcal{A}_m} \lambda_\alpha \prod_{j=0}^{m-1} \left(a^{(j)}\right)^{\alpha_j}, 
\end{equation} 
where \[\mathcal{A}_m\coloneqq \{\alpha=(\alpha_0,\ldots, \alpha_{m-1}) \in \N^m: \sum_{j=0}^{m-1}\alpha_j(j+1)=m\}.\] 
Indeed, $(\partial+a)(1)=a$ and inductively, for each summand in (\ref{eq:am}) both, multiplication by $a$ and differentiation, give a monomial with exponents in $\mathcal{A}_{m+1}$. 
 
Next, we show that each of the coefficients $\lambda_\alpha\in\F_p$ for $\alpha\in\mathcal{A}_m$ in the expansion (\ref{eq:am}) of $a_m$ is given by 
\begin{align} \notag 
\lambda_\alpha &= \frac{m!}{\prod_{j=0}^{m-1}\alpha_j! ((j+1)!)^{\alpha_j}}\\ 
&=\binom{m}{\alpha_0, 2\alpha_1, \ldots, m\alpha_{m-1}}\prod_{j=0}^{m-1} \binom{\alpha_j(j+1)}{j+1,\ldots, j+1}\frac{1}{\alpha_j!}\label{eq:coeff}. 
\end{align} 
The product on the right-hand side of this equation is an integer; thus its residue modulo $p$ defines an element in $\F_p$, equal to $\lambda_\alpha$. Again, we proceed by induction. For $m=0$, $\mathcal{A}_0=\emptyset$ and $\lambda_0=1$. Denote by $\epsilon_k$ for $0\leq k \leq m$ the element $(0,\ldots, 0, 1, 0, \ldots, 0)\in \N^{m+1}$, where the entry $1$ is in the $k+1$-st position. We embed $\N^{m}$ in $\N^{m+1}$ by $\N^m\cong \N^m\times \{0\}\subseteq \N^{m+1}$.  Then for $\alpha\in \mathcal{A}_{m+1}$ we get 
\begin{align*} 
\lambda_\alpha&=\lambda_{\alpha-\epsilon_0}+\sum_{j=0}^{m-1}(\alpha_{j}+1)\lambda_{\alpha+\epsilon_j-\epsilon_{j+1}}\\ 
&=\frac{(m+1)!}{\prod_{j=0}^m\alpha_j!((j+1)!)^{\alpha_j}}\left(
\frac{\alpha_0\cdot 1!}{m+1}+\sum_{j=0}^m \frac{(j+2) \alpha_{j+1}}{m+1} 
\right)=\frac{(m+1)!}{\prod_{j=0}^m\alpha_j!((j+1)!)^{\alpha_j}} 
\end{align*} 
using the induction hypothesis for $\alpha+\epsilon_j-\epsilon_{j+1}\in \mathcal{A}_m$.\\ 
 
Now we show that for $m=p^k$ only a small portion of the coefficients $\lambda_\alpha$ are non-zero, namely only if $\alpha=p^{k-\ell}\epsilon_{p^\ell-1}$ for $\ell=0,1,\ldots, k$. In these cases $\lambda_\alpha=1$. 
 
By Lucas' Theorem applied to the left hand side multinomial coefficient in \eqref{eq:coeff}, it follows that $\lambda_\alpha=0$ except $p^k=m=\alpha_{j_0}(j_0+1)$ for some $j_0$. Consequently $j_0=p^\ell-1$ and $\alpha_{j_0}=p^{k-\ell}$ for some $\ell$. This means $\alpha=p^{k-\ell}\epsilon_{p^\ell-1}$. We compute, splitting the multinomial coefficient into a product of binomial coefficients, accounting for one factor of $p^{k-\ell}!$ in each of these binomials and using Lucas' Theorem: 
\[\lambda_\alpha=1\cdot \frac{1}{p^{k-\ell}!}\binom{p^k}{p^\ell,\ldots, p^\ell}=\prod_{j=0}^{p^{k-\ell}}\binom{jp^\ell-1}{p^\ell-1}=\prod_{j=0}^{p^{k-\ell}}\binom{j-1}{0}=1.\] 
This finishes the proof. 
\end{proof} 
 
If $a\in \F_p\osb x\csb$, then $a_{p^k}=a_p^{p^{k-1}},$ i.e., the evaluation of $L^{p^k}$ at $1$ is just the $p^{k-1}$-st power of the evaluation of $L^p$ at $0$. Therefore it vanishes if and only if the $p$-curvature does. However, if $a\in \F_p(z_1,\ldots, z_k)\orb x\crb$ depends on finitely many $z_i$-variables, the associated $p^j$-curvatures are not powers of each other if $j\leq k$, whereas, for $j>k$, one still has $a_{p^j}=a_{p^{k+1}}^{j-k}$. If $a$ is an arbitrary element in $\R_p$, there need not be any such relation between the $p^j$-curvatures. 
  
 
\section{Product Formulas and Algebraicity for Solutions of Equations of Order 1}\label{sec:product} 
 
The goal of this section is to generalize the product formula for $\exp_p$ developed in Section~\ref{sec:expp} to solutions of arbitrary first order differential equations. 
\\ 
 
Recall from the previous section that for regular singular first order operators $\partial-a$ we can restrict to the study of non-singular operators by replacing $y$ by $x^{-\rho}y$, where $\rho$ is the local exponent of the equation and can assume that  $a\in \Sp^{(0)}=\F_p\osb x\csb$.  We defined $w_k\coloneqq z_1^{p^{k-1}}z_2^{p^{k-2}}\cdots z_k^{1}$ and  $\mathcal{S}_p$ as the completion of $\F_p[x, x^pw_1,x^{p^2}w_2,x^{p^3}w_3,\ldots] $ in $\Rp$ with respect to the $x$-adic topology, as well as $\mathcal{S}_p^{(k)} \coloneqq \mathcal{S}_p\cap \Rpk$. Further recall the projection maps $\pi_k:\Sp\to \Spk$, given by setting $z_{k+1}, z_{k+2},\ldots$ equal to $0$. 


\begin{thm}\label{thm:prod} 
Let $L=\partial+a$ be a first order linear differential operator with $a$ a rational function in $\F_p(x)$ or an algebraic series in $\F_p\orb x\crb$. Assume that $L$ has a regular singularity at $0$ and local exponent $\rho=0$. Then, for every $k\in \N$, there exists a series $h_k\in 1+x^{p^k}w_k\Spk$, algebraic over $\F_p(x, z_1,z_2,\ldots, z_k)$, such that 
$y=\prod_{k=0}^\infty h_k$ is a solution of $Ly=0$. In particular, $\pi_i(h)=\prod_{k=0}^ih_k$ is algebraic over $\F_p(x,z_1,\ldots, z_i)$ for all $i$. 
\end{thm}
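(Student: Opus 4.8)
By the reduction recalled just above, we may assume $a\in\Sp^{(0)}=\F_p\osb x\csb$ is algebraic over $\F_p(x)$. I would build the factors $h_k$ one at a time, imitating the product $\exppt=\prod_i H((-1)^ig_i)$ of \cref{sec:expp}, while tracking the \emph{residual coefficient} $a_k$ left after dividing out $h_0\cdots h_{k-1}$, i.e.\ the coefficient of $h_{k-1}^{-1}\cdots h_0^{-1}Lh_0\cdots h_{k-1}=\partial+a_k$, so that $a_0=a$ and $a_{k+1}=a_k+h_k'/h_k$. Writing $u_k\coloneqq x^{p^k}w_k$ (whence $u_k'=u_k/(xz_1\cdots z_k)$ has $x$-order $p^k-1$), the aim is to maintain, for $k\ge1$, the invariant $a_k=u_k'\,\tilde c_k^p$ with $\tilde c_k\in\Sp^{(k-1)}$ algebraic over $\F_p(x,z_1,\dots,z_{k-1})$; this forces $\ord_x a_k\ge p^k-1$. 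Once convergence of $y\coloneqq\prod_{k\ge0}h_k$ is established, its logarithmic derivative telescopes, $y'/y=\sum_{k\ge0}(a_{k+1}-a_k)=-a+\lim_K a_{K+1}=-a$, so $Ly=0$.

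For the recursive step ($k\ge1$): given $a_k=u_k'\tilde c_k^p$, I would set $h_k\coloneqq H(-\sigma(u_k\tilde c_k^p))$, which is again of the shape $H((-1)^kg_k)$ as in \cref{sec:expp}. Since $u_k\tilde c_k^p\in x^{p^k}w_k\Sp^{(k)}$ and $\sigma$ maps $\Sp^{(k)}$ into itself, one gets $h_k\in1+x^{p^k}w_k\Sp^{(k)}$. As $\tilde c_k^p$ is a $p$-th power its derivative vanishes, so $(u_k\tilde c_k^p)'/(u_k\tilde c_k^p)=u_k'/u_k=z_k'/z_k$; feeding this into \cref{lem:magic} yields $h_k'/h_k=-\sigma(u_k\tilde c_k^p)/(xz_1\cdots z_k)$, whose lowest-order term is $-u_k'\tilde c_k^p=-a_k$. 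Cancelling it and using the additivity of Frobenius and of $\sigma$ together with the monomial identities $u_{k-1}^p z_k=u_k$ and $w_{k+1}=w_k^p z_{k+1}$, the remainder collapses to $a_{k+1}=a_k+h_k'/h_k=-\sigma(u_k\tilde c_k^p)^p/(xz_1\cdots z_k)=u_{k+1}'\,\tilde c_{k+1}^p$ with $\tilde c_{k+1}\coloneqq-\sigma(u_k\tilde c_k^p)/u_k=-\tilde c_k^p-u_k^{p-1}\tilde c_k^{p^2}-\cdots\in\Sp^{(k)}$. This $\tilde c_{k+1}$ is algebraic over $\F_p(x,z_1,\dots,z_k)$ because $\sigma$ carries algebraic series to algebraic ones (it satisfies $\sigma(f)-\sigma(f)^p=f$); moreover $\ord_x(h_k-1)\ge\ord_x\sigma(u_k\tilde c_k^p)=\ord_x(u_k\tilde c_k^p)\ge p^k$, which makes $\prod h_k$ converge $x$-adically in $\Sp$.

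The delicate part, and the one I expect to be the main obstacle, is the base case $k=0$, where there is no variable ``$z_0$'' available to absorb the order-zero part of $a$; the device I propose is to bring in the $p$-curvature. Since $\partial^p=0$ on $\F_p\osb x\csb$ (a product of $p$ consecutive integers is $\equiv0\bmod p$), the $p$-curvature $a_p=a^p+a^{(p-1)}$ of $\partial+a$ (\cref{prop:pcurveformula}) lies in $\F_p\osb x^p\csb$ and is algebraic over $\F_p(x)$. Let $\phi\in\F_p\osb x^p\csb$ be the unique $x$-adic solution of $\phi=-a_p+(x^p)^{p-1}\phi^p$ (unique by contraction, algebraic since it is a root of $(x^p)^{p-1}T^p-T-a_p\in\F_p(x)[T]$), and put $a_1\coloneqq x^{p-1}\phi$. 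Using $\phi'=0$ and Wilson's congruence $(p-1)!\equiv-1$, one checks $a_1^p+a_1^{(p-1)}=(x^p)^{p-1}\phi^p-\phi=a_p$, so the operator $\partial+(a-a_1)$ has algebraic coefficient and vanishing $p$-curvature $(a-a_1)^p+(a-a_1)^{(p-1)}=a_p-(a_1^p+a_1^{(p-1)})=0$. By \cref{prop:cartiersl} (with $k=0$ and $\Tp^{(0)}=\mathcal{R}_{p,\mathrm{alg}}^{(0)}$) it admits an algebraic solution; since its logarithmic derivative $a_1-a$ is pole-free, such a solution has order divisible by $p$, so after multiplying by a constant $p$-th power $x^{pm}\in\Cp$ and a scalar we may take it of the form $h_0\in1+x\F_p\osb x\csb$ (which is $1+x^{p^0}w_0\Sp^{(0)}$, as $w_0=1$). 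Then $a_1=a+h_0'/h_0$, and since $\F_p$ is perfect, $\phi\in\F_p\osb x^p\csb$ is a $p$-th power $\phi=\tilde c_1^p$ with $\tilde c_1\in\F_p\osb x\csb$ algebraic, so $a_1=x^{p-1}\tilde c_1^p=u_1'\tilde c_1^p$: the invariant holds at $k=1$ and the recursion is primed.

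Finally I would assemble the conclusion. The product $y=\prod_{k\ge0}h_k\in\Sp$ converges and solves $Ly=0$ by the telescoping identity. Each $h_k$ is algebraic over $\F_p(x,z_1,\dots,z_k)$, hence over $\F_p(x,z_1,\dots,z_i)$ for all $i\ge k$, so the finite product $\prod_{k=0}^i h_k$ is algebraic over $\F_p(x,z_1,\dots,z_i)$. Moreover $h_k-1\in x^{p^k}w_k\Sp^{(k)}$ and $z_k\mid w_k$ give $\pi_i(h_k)=1$ for $k>i$, while $\pi_i$ fixes each $h_k\in\Sp^{(k)}\subseteq\Sp^{(i)}$ for $k\le i$; by continuity of $\pi_i$ this yields $\pi_i(y)=\prod_{k\le i}h_k$, which is algebraic over $\F_p(x,z_1,\dots,z_i)$ — exactly the assertion of the theorem, including the ``in particular''. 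The only point that is not a routine computation is the base case: confirming that the $p$-curvature fixed point above really produces an algebraic $h_0$ for which $a+h_0'/h_0$ has the shape required to launch the recursion.
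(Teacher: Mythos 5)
Your argument is correct, and it reaches the product decomposition by a genuinely different route from the paper's. The paper's engine is \cref{lem:Li}: at \emph{every} level $i$ it perturbs the current operator by a term $V_i=v^p(x^{p^{i+1}}w_{i+1})'$, where $v$ is produced by a fixed-point (implicit function theorem) argument forcing the $p^{i+1}$-curvature of $L-V_i$ to vanish, and then extracts the factor $h_i$ as the normalized truncated Taylor solution of \cref{lem:solution}; the higher curvatures of \cref{sec:pk-curvature} are therefore invoked at every stage, together with \cref{lem:proj1} to control the tail operator. You instead use only the classical $p$-curvature, once, at level $0$ --- solving $\phi=-a_p+(x^p)^{p-1}\phi^p$ and appealing to \cref{prop:cartiersl} to get $h_0$ --- in order to put the residual coefficient into the normal form $a_1=u_1'\tilde c_1^p$ (your base-case computation $a_1^{(p-1)}=(p-1)!\,\phi=-\phi$ is exactly the content of \cref{lem:Li} at $i=0$, so your flagged worry is unfounded). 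From then on each factor is written in closed form as $H(-\sigma(u_k\tilde c_k^p))$, and the identity $\sigma(s)-s=\sigma(s)^p$ propagates the normal form automatically, with the explicit $\tilde c_{k+1}=-\sigma(u_k\tilde c_k^p)/u_k\in\Spk$; no further equation has to be solved. This is precisely the mechanism behind $\exppt$ in \cref{sec:expp}, and your observation that it generalizes verbatim once the level-$0$ residual is an exact derivative times a $p$th power is the real content of your proof. What each approach buys: the paper's runs uniformly through the $p^k$-curvature formalism (which it develops anyway and which explains \emph{why} the decomposition exists, the successive operators having vanishing $p^{i+1}$-curvature --- a fact your construction also delivers, but only implicitly via \cref{prop:cartiersl}); yours is more explicit and constructive for $k\geq1$, trading \cref{lem:Li} and \cref{lem:proj1} for a single Artin--Schreier solve. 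The computations you leave implicit all check out: $u_k'/u_k=1/(xz_1\cdots z_k)$, the vanishing of $(\tilde c_k^p)'$ so that \cref{lem:magic} applies in the form $H(\sigma(t))'/H(\sigma(t))=\sigma(t)\,t'/t$, the order bound $\ord_x(h_k-1)\geq p^k$ guaranteeing convergence, and the $x$-adic telescoping of $\sum_k h_k'/h_k$.
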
 
 
The series $h_k$ will be explicitly constructed in the course of the proof. Combining Proposition~\ref{prop:monomially} and Lemma~\ref{lem:zcoeff}, one gets the following immediate consequences. 
 
\begin{cor} \label{cor:xericalgebraic}
Let $L=\partial+a$ be a first order differential operator with rational or algebraic power series coefficient $a\in \F_p\orb x\crb$ and local exponent $\rho \in \F_p$. Then its \monominduced solution has algebraic projections. 
\end{cor}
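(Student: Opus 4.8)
The plan is to obtain the corollary as a short consequence of \cref{thm:prod} and \cref{prop:monomially}; all of the substance lies in those two results, and what remains is bookkeeping.

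First I would reduce to the case of local exponent $\rho=0$ and coefficient $a\in\F_p\osb x\csb$. Writing $a=\frac{\rho}{x}+\widetilde a$ with $\widetilde a\in\F_p\osb x\csb$ still rational (respectively algebraic), the substitution $y\mapsto x^{-\rho}y$ recalled before \cref{thm:prod} turns $Ly=0$ into the non-singular equation $y'+\widetilde a y=0$ with local exponent $0$. For a first order equation there is a single local exponent, of multiplicity one, so the xeric solution is the unique solution whose section $\langle\,\cdot\,\rangle_{\rho,0}$ equals the initial monomial; hence if $y_0$ denotes the xeric solution of $\partial+\widetilde a$ then $x^\rho y_0$ is the xeric solution of $L$. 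Since $0\le\rho\le p-1$, multiplication by the polynomial $x^\rho$ neither changes which $z$-variables occur nor affects algebraicity, so $\pi_k(x^\rho y_0)=x^\rho\pi_k(y_0)$ is algebraic over $\F_p(x,z_1,\ldots,z_k)$ exactly when $\pi_k(y_0)$ is. Thus it suffices to treat $\partial+\widetilde a$, i.e. we may assume $\rho=0$.

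Next I would apply \cref{thm:prod} to $L=\partial+a$ with $a\in\F_p\osb x\csb$: it produces algebraic series $h_k\in 1+x^{p^k}w_k\Spk$ whose infinite product $y\coloneqq\prod_{k\ge 0}h_k$ solves $Ly=0$ and whose partial products $\pi_i(y)=\prod_{k=0}^i h_k$ are algebraic over $\F_p(x,z_1,\ldots,z_i)$ for every $i$. One point to record here is that $\Sp\subseteq\F_p[z_1,z_2,\ldots]\osb x\csb$: each generator $x^{p^k}w_k$ of $\Sp$ is a polynomial in $x$ and the $z_i$, and $\F_p[z_1,z_2,\ldots]\osb x\csb$ is $x$-adically complete, so its $x$-adic closure stays inside it. Hence $y$, and each $\pi_i(y)$, is a genuine element of $\F_p[z_1,z_2,\ldots]\osb x\csb$, which is the ambient ring required by \cref{prop:monomially}.

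Finally I would invoke \cref{prop:monomially} with $n=1$: the solutions of the first order equation $Ly=0$ form a one-dimensional $\Cp$-space, so $\{y\}$ is a $\Cp$-basis of power series solutions all of whose projections are algebraic, and the proposition then yields that the xeric solution has projections $\pi_k$ algebraic over $\F_p(x,z_1,\ldots,z_k)$ for every $k$; combined with the first step this is the claim. If one additionally wants the Laurent coefficient of each monomial $z^\alpha$ in the xeric solution to be algebraic over $\F_p(x)$, apply \cref{lem:zcoeff} to each of its projections. There is no real obstacle internal to this deduction — the work is hidden in \cref{thm:prod} and \cref{prop:monomially} — and the only things needing care are that the $x^\rho$-shift in the first step preserves the xeric property and algebraicity, and that the product solution furnished by \cref{thm:prod}, a priori only an element of $\Sp\subseteq\Rp$, actually lies in $\F_p[z_1,z_2,\ldots]\osb x\csb$.
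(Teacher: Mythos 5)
Your argument is correct and is exactly the paper's intended deduction: \cref{thm:prod} supplies a solution whose projections are algebraic, and \cref{prop:monomially} with $n=1$ transfers this to the xeric solution. The reduction to $\rho=0$ via multiplication by $x^{\rho}$ and the observation that the product solution lies in $\F_p[z_1,z_2,\ldots]\osb x\csb$ are the same routine checks the paper leaves implicit, so there is nothing to add.
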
 
 
\begin{cor} 
Let $y$ be the solution $h$ of $Ly=0$ defined in Theorem~\ref{thm:prod} or the \monominduced solution of this equation. Then, for any $\alpha\in \N^{(\N)}$, the coefficient $y_\alpha\in \F_p\orb x\crb$ of $z^\alpha$ in $y$ is algebraic over $\F_p(x)$. In particular, the initial series $y\vert_{z_1=z_2=\cdots=0}$ of $y$ is algebraic. 
\end{cor}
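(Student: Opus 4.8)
The plan is to deduce the corollary from two facts already in hand: the algebraicity of the projections $\pi_i(y)$, available for both candidate solutions, and the coefficient-extraction result Lemma~\ref{lem:zcoeff}. The bridge between them is the elementary observation that extracting the coefficient of a fixed $z$-monomial commutes with the projection $\pi_i$ once $i$ is large enough. First I would record that in either case $y$ lies in $\F_p[z_1,z_2,\ldots]\osb x\csb$, so that $y=\sum_\alpha y_\alpha z^\alpha$ with well-defined coefficients $y_\alpha\in\F_p\osb x\csb$: for the product solution $y=\prod_{k=0}^\infty h_k$ of Theorem~\ref{thm:prod} this holds because each factor $h_k\in 1+x^{p^k}w_k\Spk$ has polynomial $z$-coefficients and the coefficient of any fixed power $x^n$ involves only finitely many factors; for the \monominduced solution it is part of the conclusion of Theorem~\ref{thm:Fuchs}. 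In both cases the projections $\pi_i(y)$ lie in $\F_p[z_1,\ldots,z_i]\osb x\csb$ and are algebraic over $\F_p(x,z_1,\ldots,z_i)$ for every $i$ — by Theorem~\ref{thm:prod}, where $\pi_i(y)=\prod_{k=0}^i h_k$, in the product case, and by Corollary~\ref{cor:xericalgebraic} in the \monominduced case.

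Next I would fix $\alpha\in\N^{(\N)}$ and choose $i$ with $\alpha_j=0$ for all $j>i$, so that $z^\alpha\in\F_p[z_1,\ldots,z_i]$. The projection $\pi_i$ kills precisely those monomials $z^\beta$ having $\beta_j\neq0$ for some $j>i$, and does not alter the coefficient of any monomial supported on $\{1,\ldots,i\}$. Since $z^\alpha$ is such a monomial, the coefficient of $z^\alpha$ in $\pi_i(y)$ is exactly $y_\alpha$, i.e.\ $y_\alpha=\bigl(\pi_i(y)\bigr)_\alpha$. Now $\pi_i(y)\in\F_p[z_1,\ldots,z_i]\osb x\csb$ is algebraic over $\F_p(x,z_1,\ldots,z_i)$, so Lemma~\ref{lem:zcoeff} applies and gives that its $z^\alpha$-coefficient is algebraic over $\F_p(x)$. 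Hence $y_\alpha$ is algebraic over $\F_p(x)$, and since $\alpha$ was arbitrary this proves the first assertion.

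The initial series is the special case $\alpha=0$: one has $y\vert_{z_1=z_2=\cdots=0}=\pi_0(y)=y_0$, which is therefore algebraic over $\F_p(x)$ (for the product solution this is just the algebraic factor $h_0\in 1+x\F_p\osb x\csb$). The only step that genuinely requires verification rather than quotation is the commutation $y_\alpha=\bigl(\pi_i(y)\bigr)_\alpha$ for $i\geq\max\supp(\alpha)$; everything else is a direct appeal to Theorem~\ref{thm:prod}, Corollary~\ref{cor:xericalgebraic}, and Lemma~\ref{lem:zcoeff}. One should additionally make sure the projections carry polynomial, not merely rational, coefficients in the $z_i$, since this is a hypothesis of Lemma~\ref{lem:zcoeff}; this is guaranteed by $\pi_i(y)\in\F_p[z_1,\ldots,z_i]\osb x\csb$.
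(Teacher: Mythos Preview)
Your proof is correct and follows exactly the route the paper indicates: the paper states this corollary as an immediate consequence of Proposition~\ref{prop:monomially} and Lemma~\ref{lem:zcoeff} (together with Theorem~\ref{thm:prod}), and you have simply spelled out the details of that deduction, including the elementary but necessary observation that $y_\alpha=(\pi_i(y))_\alpha$ once $i\geq\max\supp(\alpha)$.
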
 
 
For the proof of Theorem~\ref{thm:prod} we need to deform $L$ to an operator $L_{\leq i}=L-V_i$ with vanishing $p^{i+1}$-curvature, and such that the solutions of $L$, $L_{\leq i}$ and $L_{>i}=\partial +V_i$ are closely related to each other. To do so, we need two auxiliary results.\\


\begin{lem}\label{lem:diffprod} 
Let $a, b\in \R_p$ and let $s\in \Rp\setminus \{0\}$ be a solution of $(\partial +a)y=0$. 
Then 
$t$ is a solution of $(\partial + b)y=0$ 
if and only if 
$st$ is a solution of $(\partial +a + b)y=0$. 
\end{lem}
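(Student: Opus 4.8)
The plan is to verify the differential relation by a direct computation using the additivity of the logarithmic derivative, exactly as was done for $\widetilde{\exp}_p$ earlier. Since $s\neq 0$ and, by assumption, $s$ is a solution of $(\partial+a)y=0$, we have $s'/s = -a$ in $\Rp$. The statement asks to relate solutions of $(\partial+b)y=0$ and $(\partial+a+b)y=0$ via multiplication by $s$.

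First I would compute, for arbitrary $t\in\Rp$, the derivative of the product $st$ by the Leibniz rule: $(st)' = s't + st' = -ast + st'$. Hence
\[
(\partial + a + b)(st) = (st)' + (a+b)st = -ast + st' + ast + bst = st' + bst = s\bigl((\partial+b)t\bigr).
\]
Since $s\neq 0$ and $\Rp$ is a field (indeed a domain), $s\bigl((\partial+b)t\bigr)=0$ if and only if $(\partial+b)t=0$. This gives the equivalence: $st$ solves $(\partial+a+b)y=0$ iff $t$ solves $(\partial+b)y=0$. That is the whole argument; there is essentially no obstacle, the only point to be careful about is that $s$ being a unit in $\Rp$ (or at least a non-zero-divisor) is what licenses cancellation, and this is guaranteed because $\Rp$ is a field.

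One small subtlety worth addressing explicitly: the ``only if'' direction needs that every solution $t$ of $(\partial+b)y=0$ is obtained this way, but the statement is phrased as an equivalence for a fixed $t$, so nothing further is needed. Alternatively, and perhaps more transparently, one can phrase the computation purely in terms of logarithmic derivatives when $t\neq0$: if $t\neq0$ then $st\neq0$ and $(st)'/(st) = s'/s + t'/t = -a + t'/t$, so $(st)'/(st) = -(a+b)$ iff $t'/t = -b$; the case $t=0$ is trivial since then $st=0$ solves every such equation. I would present the Leibniz-rule version since it handles $t=0$ uniformly and needs no case distinction.
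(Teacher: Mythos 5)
Your proof is correct and is essentially the same as the paper's one-line argument, which also computes $(\partial+a+b)(st)=(s'+as)t+s(t'+bt)=s\,(\partial+b)t$ and uses that $s\neq 0$ in the field $\Rp$ to conclude the equivalence. Nothing to add.
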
 
\begin{proof} 
It is clear since 
$(\partial +a + b)(st)=(s'+as)t+s(t'+bt)=s(\partial +b)t$. 
\end{proof} 


\begin{lem}\label{lem:proj1} 
Let $L=\partial +  v^p (x^{p^{i+1}}w_{i+1})'$ be a 
differential operator with $v\in \Spi$. 
Then, 
there exists a solution $y_0\in\Spi$ of $Ly=0$ whose 
$i$-th projection is $1$, i.e., $\pi_i(y_0)=1$. 
\end{lem}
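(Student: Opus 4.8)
The plan is to use the extension of Cartier's Lemma (Proposition~\ref{prop:cartiersl}) to produce a solution in a field of rational functions, and then exploit the special form of the operator to control the $i$-th projection. First I would observe that $L=\partial + v^p(x^{p^{i+1}}w_{i+1})'$ has coefficient lying in $\Spi$ (indeed in $\Sp^{(i+1)}$), so by Lemma~\ref{lem:solution} it has a solution $y\in\Sp$ with $y(0)=1$, obtained from the Taylor expansion $y=\sum_{m\geq 0}(-1)^m\widetilde e_m x^m a_m$ where $a_m=(\partial+a)^m(1)$ for $a=v^p(x^{p^{i+1}}w_{i+1})'$. The key point is that this $a$ has a very high-order vanishing structure: by Lemma~\ref{lem:derivationrules}(i), any monomial in $x,z_1,\dots,z_i$ has vanishing $p^{i+1}$-th derivative, and more importantly the factor $(x^{p^{i+1}}w_{i+1})'$ makes $a$ essentially ``$z_{i+1}$-linear'' in the sense that its derivatives quickly fall out of $\Spi$ or vanish after projection.

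The cleanest route, I think, is: compute the $p^{i+1}$-curvature $a_{p^{i+1}}=L^{p^{i+1}}(1)$ using Proposition~\ref{prop:pcurveformula}, namely $a_{p^{i+1}}=\sum_{j=0}^{i+1}\bigl(a^{(p^j-1)}\bigr)^{p^{i+1-j}}$. Since $a=v^p(x^{p^{i+1}}w_{i+1})'$ and $a^{(p^j-1)}$ for $j\leq i$ involves derivatives of $(x^{p^{i+1}}w_{i+1})'$ of order at most $p^i-1<p^{i+1}-1$, and $(x^{p^{i+1}}w_{i+1})'$ itself is a monomial that needs exactly $p^{i+1}-1$ derivatives to become a nonzero constant by Lemma~\ref{lem:derivationrules}(iv), these lower-order terms still contain a positive power of the ``top'' variables; in fact $v^p\in\F_p(z_1,\dots,z_i)^p\orb x\crb$ and the product rule shows $a^{(p^j-1)}$ for $j\leq i$ is divisible by a monomial that projects to $0$ under $\pi_i$ (because it still carries $z_{i+1}$ or $w_{i+1}$ to a positive power). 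Hence $\pi_i(a_{p^{i+1}})=\pi_i\bigl((a^{(p^{i+1}-1)})^{p^0}\bigr)+\text{(terms killed by }\pi_i)$, and $a^{(p^{i+1}-1)}$ I would compute via Leibniz: the only way to hit total derivative order $p^{i+1}-1$ is to differentiate $(x^{p^{i+1}}w_{i+1})'$ exactly $p^{i+1}-1$ times (getting $(-1)^{i+1}$ by Lemma~\ref{lem:derivationrules}(iv)) and leave $v^p$ undifferentiated, giving $a^{(p^{i+1}-1)}=(-1)^{i+1}v^p$ plus terms where some derivative hit $v^p$. After applying $\pi_i$ and using $\partial^{p^{i+1}}(\Spi)=0$ together with a careful bookkeeping, this should reduce the problem on the projection to the \emph{non-singular order-one equation} $\pi_i(L)$ acting on $\Spi$, whose solution space modulo the structure is governed by a vanishing $p^{i+1}$-curvature over $\F_p(x,z_1,\dots,z_i)$.

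Alternatively — and this may be simpler to write — I would argue directly that $\pi_i(L)=\partial + \pi_i\bigl(v^p(x^{p^{i+1}}w_{i+1})'\bigr)$, and since $(x^{p^{i+1}}w_{i+1})'=x^{p^{i+1}-1}w_i^{p-1}\cdot(\text{stuff})$ — precisely, $(x^{p^{i+1}}w_{i+1})'=x^{p-1}(x^pw_1)^{p-1}\cdots(x^{p^i}w_i)^{p-1}$ which lies in $\Sp^{(i)}$ and involves \emph{no} $z_{i+1}$ — the coefficient of $L$ actually lies in $\Spi$ already, so $\pi_i$ acts on it nontrivially. Then I would invoke Cartier's Lemma style reasoning: the operator $L$ restricted via $\pi_i$ must have a solution with constant term $1$ in $\Spi$ precisely because the solution $y\in\Sp$ from Lemma~\ref{lem:solution} satisfies $\pi_i(y)\in\Spi$ and $\pi_i$ commutes with $\partial$ on $\Spi$ (this is the content of $\Spi$ being differentially closed), so $\pi_i(y)$ solves $\pi_i(L)w=0$; but I must then show $\pi_i(y)=1$, which forces showing that all the terms $(-1)^m\widetilde e_m x^m a_m$ with $m\geq 1$ project to $0$ under $\pi_i$. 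This last vanishing is the heart of the matter: since $a=v^p\cdot(x^{p^{i+1}}w_{i+1})'$ and $a_m$ is built from $a$ and its derivatives with at most $m-1\leq $ derivatives distributed, while $w_{i+1}=w_i^p z_{i+1}$ carries the variable $z_{i+1}$, every $a_m$ for $m\geq 1$ is divisible by a monomial involving $z_{i+1}$ (to a positive power, since to eliminate $z_{i+1}$ one needs $\geq p^{i+1}$ derivatives on the $w_{i+1}$ part but differentiating that many times on a single factor is controlled by Lemma~\ref{lem:derivationrules}), hence $\pi_i(a_m)=0$ for all $m\geq 1$, leaving $\pi_i(y)=(-1)^0\widetilde e_0 x^0 a_0=1$.

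The main obstacle I anticipate is making rigorous the claim that $\pi_i(a_m)=0$ for all $m\geq 1$ — i.e., that every derivative-and-multiply combination producing $a_m$ retains a positive power of $z_{i+1}$ before the final differentiations. The subtlety is that $a^{(j)}$ can, via Leibniz, distribute derivatives so that the $(x^{p^{i+1}}w_{i+1})'$ factor gets hit up to $p^{i+1}-1$ times within a single $a_m$ when $m$ is large; but crucially, within the Taylor-type sum $\sum(-1)^m\widetilde e_m x^m a_m$ one only needs $\pi_i$ of the \emph{whole sum}, and $\widetilde e_m x^m$ forces $\ord_x(\widetilde e_m x^m a_m)\geq m$, so only finitely many $m$ contribute to each $x$-coefficient of $\pi_i(y)$; combined with the fact that $\pi_i(y)$ solves the first-order equation $\pi_i(L)w=0$ with $\pi_i(L)$ having a regular point at $0$, uniqueness of the solution with value $1$ at $0$ would finish it — \emph{provided} I can show $\pi_i(y)$ is a solution at all, for which I need $\pi_i(L(y))=\pi_i(L)(\pi_i(y))$, i.e., that $\pi_i$ is a differential ring homomorphism on the relevant subring. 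That compatibility, together with $\pi_i(y)(0)=1$, gives $\pi_i(y)=1$ by uniqueness of solutions of non-singular first-order equations, and then $y_0\coloneqq y$ is the desired solution. I would therefore structure the proof as: (1) $y$ from Lemma~\ref{lem:solution} lies in $\Sp^{(i+1)}\subseteq\Rp$ with $y(0)=1$; (2) check the coefficient of $L$ lies in $\Spi$ so $\pi_i(L)=L$ makes sense as an operator over $\Spi$ — wait, it does \emph{not}, since $(x^{p^{i+1}}w_{i+1})'$ involves $w_i$ only and is in $\Spi$, but the solution genuinely uses $z_{i+1}$; so instead (2') verify $\pi_i\circ\partial=\partial\circ\pi_i$ on $\Sp\cap(\text{image-relevant subring})$, hence $\pi_i(y)$ solves $\partial w + \pi_i(v)^p(x^{p^{i+1}}w_{i+1})' w=0$; (3) this latter equation is non-singular at $0$, so has a unique solution with value $1$ at $0$; (4) exhibit $1$ as \emph{a} solution by a direct check that $\pi_i\bigl(v^p(x^{p^{i+1}}w_{i+1})'\bigr)=0$ — indeed $(x^{p^{i+1}}w_{i+1})'$ is a monomial divisible by $z_1^{p^i-1}\cdots z_i^{p-1}$ with \emph{no} $z_{i+1}$, so $\pi_i$ leaves it intact and it is \emph{nonzero}; thus step (4) as stated fails and the real content must be that $\pi_i(y)=1$ forces the solution to be trivial only if the coefficient projects to zero, which it doesn't — so the genuine argument must be the vanishing $\pi_i(a_m)=0$ for $m\geq1$ after all, which I will establish by induction on $m$ using the recursion $a_{m+1}=a_m'+a\,a_m$ together with the observation that $a=v^p(x^{p^{i+1}}w_{i+1})'$ is, after applying $\pi_i$, \emph{not} zero but any $a_m$ with $m\geq1$ when expanded and projected vanishes because its $x$-order is $\geq p^{i+1}-1+(\text{something forcing }z_{i+1})$; honestly, pinning down exactly which monomials survive $\pi_i$ is where I expect to spend the real effort, and I would lean on Lemma~\ref{lem:derivationrules}(iii) (a monomial has nonvanishing $(p^{k+1}-1)$-st derivative iff all exponents are $\equiv -1\bmod p$) to control it.
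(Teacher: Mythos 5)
Your proposal does not close. The argument you ultimately fall back on --- that the Taylor-type solution $y=\sum_{m\ge 0}(-1)^m\widetilde e_m x^m a_m$ of Lemma~\ref{lem:solution} satisfies $\pi_i(y)=1$ because $\pi_i(a_m)=0$ for all $m\ge 1$ --- is false, and you in fact disprove it yourself in the middle of the write-up before reverting to it. Since $(x^{p^{i+1}}w_{i+1})'=x^{p-1}(x^pw_1)^{p-1}\cdots(x^{p^i}w_i)^{p-1}$ contains no $z_{i+1}$ at all, the coefficient $a=v^p(x^{p^{i+1}}w_{i+1})'$ lies in $\Spi$, so $\pi_i(a_1)=\pi_i(a)=a\neq 0$ whenever $v\neq0$; more generally all the $a_m$ lie in $\Spi$ and are fixed by $\pi_i$. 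What saves some terms is only that $\widetilde e_m$ involves $z_{i+1}$ for certain $m$, but not for $m<p^{i+1}$. Concretely, for $p=2$, $i=0$, $v=1$ one has $L=\partial+x$, and the $m=1$ term of the Taylor solution is $-\widetilde e_1xa_1=x^2$, which survives $\pi_0$; so $\pi_0(y)=1+x^2+\cdots\neq1$. The Taylor solution is only a $\Cp$-multiple of the solution the lemma asks for, and that multiple is a nontrivial element of $\pi_i^{-1}$-type constants; no amount of bookkeeping with Lemma~\ref{lem:derivationrules}(iii) will make it disappear. Your alternative route via ``$1$ solves the projected equation by uniqueness'' fails for the same reason, as you correctly observed: $\pi_i(L)=L$ and $L(1)=a\neq0$.

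The paper's proof supplies the missing idea: instead of projecting a generic solution, it \emph{builds} one whose nonconstant part is forced to involve $z_{i+1}$. Define the $\F_p$-algebra endomorphism $\varphi$ of $\Sp$ by $\varphi(x^{p^{k-1}}w_{k-1})=v^{p^k}x^{p^{i+k}}w_{i+k}$ and the rescaled derivation $D=(\varphi(x)')^{-1}\partial$; a direct computation with the identity $(x^{p^k}w_k)'=\prod_{j=0}^{k-1}(x^{p^j}w_j)^{p-1}$ shows $\varphi$ intertwines $(\Sp,\partial)$ with $(\varphi(\Sp),D)$. Then $y_0=\varphi\bigl(\exppt(-x)\bigr)$ satisfies $Ly_0=\varphi(x)'(D+1)y_0=0$, and since $\varphi$ sends every nonconstant monomial to a multiple of $w_{i+1}$ (hence of $z_{i+1}$), $\pi_i$ kills everything except the constant term, giving $\pi_i(y_0)=1$. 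If you want to repair your approach, you would have to identify and divide out the $\Cp$-constant $\pi_i(y)\in\Cp\cap\Spi$ relating your Taylor solution to this normalized one, which amounts to redoing the substitution argument.
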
 
\begin{proof} 
If $v=0$, we only have to take $y_0=1$.  Assume $v\neq0$. 
Define an $\mathbb{F}_p$-algebra endomorphism 
$\varphi\colon\Sp\to\Sp$ 
and a derivation $D$ on $\varphi(\Sp)$ by 
$$\varphi(x^{p^{k-1}}w_{k-1})\coloneqq v^{p^{k}}x^{p^{i+k}}w_{i+k}, 
\quad 
D 
\coloneqq(\varphi(x)')^{-1}\partial. 
$$ 
Since 
$\partial (x^{p^{k-1}}w_{k-1}) 
=\prod_{j=0}^{k-2}(x^{p^{j}}w_{j})^{p-1}$ 
and 
$$D (\varphi(x^{p^{k-1}}w_{k-1})) 
=\dfrac{v^{p^k}(x^{p^{i+k}}w_{i+k})'}{v^p(x^{p^{i+1}}w_{i+1})'} 
=v^{p^k-p}\prod_{j=i+1}^{k+i-1}(x^{p^{j}}w_{j})^{p-1} 
=\prod_{j=0}^{k-2}\varphi(x^{p^{j}}w_{j})^{p-1} 
,$$ 
$\varphi$ induces an isomorphism 
$(\Sp,\partial)\cong(\varphi(\Sp),D)$ of differential algebras. 
Set $y_1=\exppt(-x)= \sum_{j=0}^\infty \widetilde{e}_j(-x)^j$ 
as in Lemma \ref{lem:solution}.
Since $(\partial+1)y_1=0$, the image $y_0=\varphi(y_1)$ satisfies 
$Ly_0=\varphi(x)'(D+1)y_0=0$.  Since ${y_1}|_{x=0}=1$, 
we have $y_0|_{w_{i+1}=0}=1$, and therefore, $\pi_{i}(y_0)=1$. 
\end{proof}


\begin{lem} \label{lem:Li} 
Let $L= \partial+a$ be a differential operator with algebraic coefficient $a\in \Spi$, for some $i\geq 1$. There exists an element 
$V_i=v^p (x^{p^{i+1}}w_{i+1})'$ with 
$v\in \Spi$, algebraic over $\F_p(x, z_1,\ldots, z_i)$, such that the operators 
$L_{\leq i}=L-V_i$ and $L_{>i}=\partial +V_i$ satisfy the following properties. 
 
\begin{enumerate}[(i)] 
\item $L_{\leq i}$ has zero $p^{i+1}$-curvature. 
 
\item $L_{>i}y=0$ has a solution $y\in \Sp$ with projection $\pi_i(y)=1$. 
 
\item For solutions $s$ of $L_{\leq i}y=0$ and 
$t$ of $L_{>i}y=0$, the product $y=st$ is a solution of $Ly=0$. 
\end{enumerate} 
\end{lem}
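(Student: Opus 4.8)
The plan is to define $v$ so that $V_i = v^p(x^{p^{i+1}}w_{i+1})'$ captures exactly the ``$p^{i+1}$-curvature obstruction'' of $L = \partial + a$, and then check the three properties in turn. Recall from Proposition~\ref{prop:pcurveformula} that the $p^{i+1}$-curvature of $\partial + a$ is $a_{p^{i+1}} = (a_{p^i})^p + a^{(p^{i+1}-1)}$. The key structural fact to establish first is that for $a \in \Spi$, the curvature $a_{p^{i+1}}$ lies in $\Spi$ (indeed it is $\Rp^{(i)}$-linear by Lemma~\ref{lem:pcurvelin}, being $L^{p^{i+1}}(1)$, and $\Spi$ is differentially closed), and moreover that it is a multiple of $(x^{p^{i+1}}w_{i+1})'$: concretely, $(x^{p^{i+1}}w_{i+1})' = x^{p^{i+1}-1}\prod_{j=1}^{i}(x^{p^j}w_j)^{p-1}/(\text{unit})$ and one checks, using the expansion of $a_{p^{i+1}}$ as a sum of $p^{i+1-j}$-th powers of $a^{(p^j-1)}$ (Proposition~\ref{prop:pcurveformula}) together with the divisibility statements in Lemma~\ref{lem:derivationrules}, that $a^{(p^{i+1}-1)}$ — and hence every term in the formula — is divisible by $(x^{p^{i+1}}w_{i+1})'$ inside $\Spi$. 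Writing $a_{p^{i+1}} = v^p (x^{p^{i+1}}w_{i+1})'$ for the resulting $v$, one needs $v^p \in \Spi$ to extract $v \in \Spi$; this uses that $\Spi = \F_p\{x,\ldots,x^{p^i}w_i\}$ is closed under $p$-th roots when they exist in $\Rp^{(i)}$, and that algebraicity of $a$ over $\F_p(x,z_1,\ldots,z_i)$ propagates to $a_{p^{i+1}}$ (finitely many differentiations and field operations) and then to $v$ (a $p$-th root of an algebraic element is algebraic).

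Having produced $V_i = v^p(x^{p^{i+1}}w_{i+1})'$ with $v \in \Spi$ algebraic, property (iii) is immediate from Lemma~\ref{lem:diffprod} with $a \rightsquigarrow a - V_i$, $b \rightsquigarrow V_i$, since $(a - V_i) + V_i = a$. Property (ii) is exactly Lemma~\ref{lem:proj1} applied to $L_{>i} = \partial + v^p(x^{p^{i+1}}w_{i+1})'$, which produces a solution $y \in \Spi$ with $\pi_i(y) = 1$ (note $y \in \Spi \subseteq \Sp$). So the real content is property (i): that $L_{\leq i} = \partial + (a - V_i)$ has vanishing $p^{i+1}$-curvature. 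By Proposition~\ref{prop:pcurveformula} the curvature of $\partial + (a - V_i)$ is $\sum_{j=0}^{i+1}\bigl((a - V_i)^{(p^j - 1)}\bigr)^{p^{i+1-j}}$. The plan here is to show this equals $a_{p^{i+1}} - V_i$ up to the correction coming from $V_i$, and that the choice of $v$ makes it vanish. More precisely: subtracting $V_i$ from $a$ changes the curvature by $-\sum_{j=0}^{i+1}\bigl(V_i^{(p^j-1)}\bigr)^{p^{i+1-j}}$ plus cross terms, but $V_i^{(p^j-1)} = \bigl(v^p(x^{p^{i+1}}w_{i+1})'\bigr)^{(p^j-1)}$; for $j \leq i$ this derivative vanishes because $(x^{p^{i+1}}w_{i+1})'$ is a monomial expression in $x, z_1,\ldots,z_i$ annihilated by $\partial^{p^{i+1}-1}$ only when hit $p^{i+1}-1$ times, and $p^j - 1 < p^{i+1} - 1$ forces... — here one must be careful: $(x^{p^{i+1}}w_{i+1})'$ is \emph{not} killed by low-order derivatives in general. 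The cleaner route is: the $p^{i+1}$-curvature of $L_{\leq i}$ equals $L_{\leq i}^{\,p^{i+1}}(1)$, and since the $p^{i+1}$-curvature is additive-up-to-lower-order on operators of this shape, one computes directly that it equals $a_{p^{i+1}} - (\text{leading contribution of }V_i)$. The term in $\sum_j ((a-V_i)^{(p^j-1)})^{p^{i+1-j}}$ of highest ``$w_{i+1}$-weight'' is the $j = i+1$ term, whose $V_i$-part is $\bigl(V_i^{(p^{i+1}-1)}\bigr) = v^p \cdot \bigl((x^{p^{i+1}}w_{i+1})'\bigr)^{(p^{i+1}-1)} = v^p \cdot (-1)^{i+1}$ by Lemma~\ref{lem:derivationrules}(iv), while for $j \leq i$ the $V_i$-contributions, after raising to the $p^{i+1-j}$-th power, land in the ideal generated by higher powers of $w_{i+1}$ or by $w_{i+2}$ and beyond and thus do not interfere at the relevant ``first'' order. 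Matching the $j=i+1$ term of the original curvature, $a^{(p^{i+1}-1)} = v^p(x^{p^{i+1}}w_{i+1})' \cdot (\ldots)$ — no, matching instead with the defining relation $a_{p^{i+1}} = v^p(x^{p^{i+1}}w_{i+1})'$: one shows $L_{\leq i}^{\,p^{i+1}}(1) = a_{p^{i+1}} - v^p(x^{p^{i+1}}w_{i+1})' + (\text{terms that vanish}) = 0$ by the very definition of $v$. The terms I have glossed as ``vanishing'' need an honest argument via a filtration of $\Sp$ by powers of $w_{i+1}$ together with induction, or via the linearity statement of Lemma~\ref{lem:pcurvelin} which says the $p^{i+1}$-curvature depends $\Rp^{(i)}$-linearly on things and one only needs to track the ``$\Spi$-level'' part.

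The main obstacle, then, is part (i) — specifically, controlling how the $p^{i+1}$-curvature changes under the perturbation $a \mapsto a - V_i$ and verifying that the choice of $v$ as a $p$-th root of $a_{p^{i+1}}/(x^{p^{i+1}}w_{i+1})'$ makes it vanish identically rather than merely to leading order. I expect this to require either (a) a direct computation using Proposition~\ref{prop:pcurveformula} showing that $\partial + V_i$ has $p^{i+1}$-curvature exactly $v^p(x^{p^{i+1}}w_{i+1})' = a_{p^{i+1}}$ (so $V_i$ is itself an operator whose curvature is $a_{p^{i+1}}$, hence "absorbs" it), combined with an additivity lemma for $p^{i+1}$-curvatures of commuting-type first-order operators; or (b) an argument that $L_{\leq i}$, having coefficient $a - V_i$ with strictly smaller ``$w_{i+1}$-complexity,'' has curvature obtained by iterating the construction, terminating because the relevant complexity is a non-negative integer that strictly decreases. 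Approach (a) is cleanest if one can show that the $p^{i+1}$-curvature of $\partial + a$, as a function of $a \in \Spi$, satisfies $(a+b)_{p^{i+1}} = a_{p^{i+1}} + b_{p^{i+1}}$ whenever $b = V_i$ is of the special form $v^p \cdot (\text{monomial derivative})$ — which should follow from the formula in Proposition~\ref{prop:pcurveformula} because the mixed terms $(a+b)^{(p^j-1)} = a^{(p^j-1)} + b^{(p^j-1)}$ are then raised to $p$-th powers, killing cross terms, \emph{provided} $b^{(p^j-1)}$ for $j < i+1$ contributes only through $p$-th powers; one checks $b^{(p^j-1)} = v^p \cdot \bigl((x^{p^{i+1}}w_{i+1})'\bigr)^{(p^j-1)}$ and that the latter, when $j \le i$, is either zero or, after the outer $p^{i+1-j}$-th power, a genuine $p$-th power, so additivity holds on the nose. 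Granting that additivity, $(a - V_i)_{p^{i+1}} = a_{p^{i+1}} - (V_i)_{p^{i+1}} = a_{p^{i+1}} - v^p(x^{p^{i+1}}w_{i+1})' = 0$, and (i) is done.
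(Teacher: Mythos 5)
Parts (ii) and (iii) of your argument are fine and match the paper: they follow at once from Lemmata \ref{lem:proj1} and \ref{lem:diffprod}. The gap is in your construction of $v$, hence in part (i). You propose to define $v$ by $v^p(x^{p^{i+1}}w_{i+1})' = a_{p^{i+1}}$, which requires $a_{p^{i+1}}$ to be divisible by $(x^{p^{i+1}}w_{i+1})'$ in $\Spi$ with a $p$-th-power quotient. This already fails in the simplest example: for $a=-1$ and $i=0$ one has $a_p=(-1)^p+0=-1$ and $(x^pw_1)'=x^{p-1}$, so your $v$ would have to satisfy $v^p=-x^{1-p}$, which has no solution in $\Sp^{(0)}=\F_p\osb x\csb$. (The correct $v$ here is $\sigma(x)/x=1+x^{p-1}+x^{p^2-1}+\cdots$; see the example at the end of Section~\ref{sec:product}.) The additivity you would then need, namely $(a-V_i)_{p^{i+1}}=a_{p^{i+1}}-(V_i)_{p^{i+1}}$ together with $(V_i)_{p^{i+1}}=V_i$, also fails: by Proposition~\ref{prop:pcurveformula} the $j=0$ term of $(V_i)_{p^{i+1}}$ is $V_i^{p^{i+1}}$, not $V_i$, and the derivatives $V_i^{(p^j-1)}$ for $1\le j\le i$ do not vanish --- exactly the worry you flagged yourself and then set aside without resolving it.

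What actually makes the lemma work is different: $v$ is determined by an \emph{implicit} equation, not a closed formula. Since $a\in\Spi$, Lemma~\ref{lem:derivationrules}(i) gives $(a^{(p^{i+1}-1)})'=a^{(p^{i+1})}=0$, so $a^{(p^{i+1}-1)}=\widetilde a^{\,p}$ is a $p$-th power with $\widetilde a\in\Spi$; likewise $V_i^{(p^{i+1}-1)}=v^p(x^{p^{i+1}}w_{i+1})^{(p^{i+1})}=(-1)^{i+1}v^p$ by Lemma~\ref{lem:derivationrules}(iv), using that $v^p\in\Cp$ passes through $\partial$. Hence by Proposition~\ref{prop:pcurveformula}
\[
L_{\leq i}^{p^{i+1}}(1)=\bigl(L_{\leq i}^{p^{i}}(1)+\widetilde a-(-1)^{i+1}v\bigr)^p,
\]
and the vanishing of the $p^{i+1}$-curvature is equivalent to $v=(-1)^{i+1}\bigl(L_{\leq i}^{p^{i}}(1)+\widetilde a\bigr)$. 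The right-hand side still contains $v$ (through $V_i$ inside $L_{\leq i}$), but only via $v^p$; so this is a polynomial equation in $v$ whose derivative with respect to $v$ is a unit, and the implicit function theorem yields an algebraic solution $v\in\Spi$. Without this fixed-point step --- the observation that the whole curvature is a perfect $p$-th power whose root depends on $v$ only through $v^p$ plus a linear term --- your argument cannot be completed.
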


\begin{proof} 
 
Take an arbitrary element $v\in \Sp^{(i)}$.  We set 
$V_i=v^p (x^{p^{i+1}}w_{i+1})'$ and 
$L_{\leq i}=L-V_i=\partial +a-V_i$. 
 
By Lemma \ref{lem:derivationrules}, we have 
$(a^{(p^{i+1}-1)})'=a^{p^{i+1}}=0$. 
It follows that 
$a^{(p^{i+1}-1)}\in\mathcal{C}_p\cap\Spi$, 
and hence there exists $\widetilde{a}\in\Spi$ 
such that $a^{(p^{i+1}-1)}={\widetilde{a}}^p$. 
 
By Proposition~\ref{prop:pcurveformula} the $p^{i+1}$-curvature of $L_{\leq i}$ is given by 
\begin{align*} 
L_{\leq i}^{p^{i+1}} (1) 
&=L_{\leq i}^{p^{i}} (1)^p + (a-V_i)^{(p^{i+1}-1)} 
=L_{\leq i}^{p^{i}} (1)^p +(a)^{(p^{i+1}-1)}-v^p (x^{p^{i+1}}w_{i+1})^{(p^{i+1})}\\ 
&=L_{\leq i}^{p^{i}} (1)^p + {\widetilde a}^p - (-1)^{i+1} v^p 
=\left\{
L_{\leq i}^{p^{i}} (1) + {\widetilde a} - (-1)^{i+1} v 
\right\}^p. 
\end{align*} 
The vanishing of the $p^{i+1}$-curvature of $L_{\leq i}$ is thus equivalent to 
\[(-1)^{i+1}\left(L_{\leq i}^{p^i}(1)+\widetilde a\right) = v.\] 
The left hand side of this equation can be expanded as a polynomial in $v^p$ with algebraic coefficients in $\Spi$. Thus we can invoke the implicit function theorem to find an algebraic solution $v$. This shows (i). 
By definition of $V_i$, $L_{\leq i}$ and $L_{>i}$, assertions (ii) and (iii) are 
direct consequences of Lemmata \ref{lem:diffprod} and \ref{lem:proj1}. 
\end{proof} 
 
 
\begin{proof}[Proof of Theorem~\ref{thm:prod}.] 
For $i\in \N$, we will construct inductively algebraic elements $h_i\in \Spi$  
such that $\pi_{i-1}(h_i)=1$ and such that 
$\prod_{j=0}^\infty h_j$ 
converges to a solution of $Ly=0$ 
in the $x$-adic topology. 
We apply Lemma \ref{lem:Li} to define for $i\in \N$
operators $N_i$ 
inductively by the formula 
$$ 
N_{0}=L,\quad 
N_{i+1}=(N_{i})_{> i}=\partial+V_i,$$
with $V_i$ as in Lemma~\ref{lem:Li}. 
Then \[(N_i)_{\leq i}=N_{i}-V_i=\partial+V_{i-1}-V_i\] for $i\geq 1$ and $(N_0)_{\leq 0}=\partial+a-V_0$. We define solutions 
$s_i\in\Spi$ 
of $(N_{i})_{\leq i}y=0$ by the formula 
$$ 
s_i\coloneqq \sum_{j=0}^{p^{i+1}-1}(-1)^j\widetilde{e}_j 
(N_{i})_{\leq i}^j(1)x^j. 
$$ 
By Lemma \ref{lem:Li} (i) the $p^{i+1}$ curvature of $(N_{i})_{\leq i}$ vanishes, and thus $s_i$ is indeed the solution of $(N_{i})_{\leq i}y=0$ given in Lemma~ \ref{lem:solution}.
By Lemma \ref{lem:Li} (ii), 
we can also take solutions 
$t_i\in\Spi$ of $N_{i+1}y=0$ for $i\in \N$
satisfying $\pi_i(t_i)=1$. 
 
\medskip 
 
Since $t_{i-1}$ and $s_it_{i}$ are both solutions of 
$N_iy=0$, they coincide up to ${\mathcal{C}_p}$-multiplication. 
As $\pi_{i-1}(t_{i-1})=\pi_{i-1}(t_i)=1$, we conclude that 
$\pi_{i-1}(s_i)\in{\mathcal C}_p$.  We observe 
that $\pi_{i-1}(s_i)$ is a unit in $\Spi$ and algebraic, and
we define $h_i$ by 
$$h_i=\pi_{i-1}(s_i)^{-1}s_i.$$ 
Then it is clear that 
$h_i\in \Spi$ is a solution of $(N_{i})_{\leq i}y=0$, 
algebraic over $\F_p(x,z_1,\ldots, z_i)$ and $\pi_{i-1}(h_i)=1$, i.e., $h_i\in 1+x^{p^i}w_i\Spi$. \\
 
The last thing to show is that $\prod_{j=0}^\infty h_j$ converges to a 
solution of $Ly=0$. For $i\geq 0$, set 
$$b_i=\prod_{j=0}^ih_j\in\Spi.$$
As $h_i$ is a solution of $(N_i)_{\leq i}y=0$, induction in combination with Lemma~\ref{lem:diffprod} shows that $b_i$ is a solution of $(L-V_i)y=0$. 

Recall that if $f\in\Sp$ satisfies $\pi_\ell(f)=0$, then 
$\ord_x(f)\geq p^\ell$ holds.  Actually, 
$f\in\Sp$ and $\pi_\ell(f)=0$ imply that 
$f$ belongs to the $\Sp$-module generated by the elements $x^{p^k}w_k$ 
with $k\geq \ell$, and hence $\ord_x(f)\geq {p^\ell}$. 
 
By this observation, we have 
$b_i= b_\ell\prod_{\ell<j\leq i}h_j\equiv b_\ell \mod x^{p^\ell}$ for $i\geq \ell$. 
Thus $b\coloneqq \lim_{i\to\infty}b_i=\prod_{j=0}^\infty h_j$ converges in the 
$x$-adic topology. 
 
As $(\partial+V_i)t_i=0$ and $(L-V_i)b_i=0$, we see again by Lemma \ref{lem:diffprod} that $t_ib_i$ is a solution of 
$Ly=0$ for any $i$. 
Since $t_i\in\Sp$ and $\pi_i(t_i)=1$, we have 
$\ord_x(b_i-t_ib_i)\geq p^i$.  
It follows that 
$\ord_x(Lb_i)=\ord_x(L(t_ib_i)+L(b_i-t_ib_i))\geq p^i-1$ and hence $Lb=\lim_{i\to \infty}Lb_i=0$.
\end{proof} 


\begin{rem} 
We can replace $a$ by an algebraic element of $\Spk$ in the theorem above. For the proof one sets 
\[h_i\coloneqq \pi_{i-1}\big(s_k\big)^{-1}\pi_i\big(s_k\big)\] for $1\leq i\leq k$ and $h_0=\pi_0(s_k)$. Since $s_k$ is algebraic, so are $h_0,\ldots, h_k$, and 
\[\pi_{i-1}\big(h_i\big)=\pi_{i-1}\big(\pi_{i-1}\big(s_k\big)^{-1}\pi_i\big(s_k\big)\big)=1. \] 
For $h_i$ with $i>k$ one proceeds as in the proof of the theorem. 
\end{rem} 
 
\begin{ex} 
In the case of the exponential differential equation $y'=y$ we have $a=-1$. In this case the equation for $v_0$ reads according to Lemma~\ref{lem:Li} 
\[-1-v_0^px^{p-1}+v_0=0,\] 
which has the solution $v_0=x^{-1}\sigma(x)$, where $\sigma(x)=\sum_{k=0}^\infty x^{p^k}$. The differential equation $L_{\leq 0}$ then reads $xy'=\sigma(x)y$, or equivalently: 
\[\frac{y'}{y}=\frac{\sigma(x)}{x}.\] By Lemma~\ref{lem:magic} we have 
\[\frac{H(\sigma(x))'}{H(\sigma(x))}=\frac{\sigma(x)}{x}, \] 
which shows that 
\[H(\sigma(x))=\prod_{k=1}^{p-1}\left(1-\frac{1}{k}\sigma(x)\right)^k\eqqcolon h_0 \] 
solves the equation $L_{\leq 0}y=0$. So we recover the beginning of the infinite product defining $\exppt$. 
\end{ex} 


\section{Trigonometric functions} \label{sec:trig} 
Having investigated the exponential function in positive characteristic one cannot resist to look also at the sine and cosine function, i.e., at the solutions of the second order differential equation 
 
\[y''+y=0.\] 
 
The local exponents at $0$ are $0$ and $1$. The equation has a $2$-dimensional solution space over the field of constants $\Cp =\F_p(z_1^p, z_2^p,\ldots)\orb x^p \crb$. Here are, for $p=3$, the two \monominduced solutions, which we call $\sin_p$ and $\cos_p$: 
\[\sin_3(x) = x+z_1 x^3+z_1 x^5+(z_1^2+z_1) x^7+z_1^3 z_2 x^9+(z_1^3 z_2+2 z_1) x^{11}+\ldots\] 
 
\[\cos_3(x) =1+x^2+2 z_1 x^4+(z_1^2+2 z_1) x^6+(z_1^2+2 z_1+2) x^8+(2 z_1^3 z_2+z_1) x^{10}+\ldots\] 
 
In this situation, it is tempting to expect again an algebraic relation of the form $\sin_p^2+\cos_p^2 =1$ as in the characteristic zero case. This can easily be disproved, and it is also not clear a priori how $\sin_p$ and $\cos_p$ relate to $\exp_p$. To explore these questions, expand 
\[\exp_p=e_0+e_1x+ e_2x^2+\ldots\] 
with $e_i\in \F_p[z]$ and split $\exp_p$ into 
\[\eve(\exp_p)=e_0+ e_2x^2+e_4x^4+\ldots,\] 
\[\odd(\exp_p)=e_1x+ e_3x^3+e_5x^5+\ldots\] 
as series of even and odd degrees. Clearly, both series are solutions of $y''-y=0$ since $(e_ix^i)''=e_{i-2}x^{i-2}$. They are, however, not \monominduced. Let us denote by $\sinh_p$ and $\cosh_p$ the \monominduced solutions of $y''-y=0$. Further, for $\ch(K)>2$, it is immediate that 
\[\eve(\exp_p)=\frac{1}{2}\left(\exp_p(z,x)+\exp_p(z,-x)\right),\] 
\[\odd(\exp_p)=\frac{1}{2}\left(\exp_p(z,x)-\exp_p(z,-x)\right).\] 
This proves by Corollary~\ref{cor:algexp} that both $\eve(\exp_p)$ and $\odd(\exp_p)$ have algebraic projections: they play the role of the classical hyperbolic sine and cosine functions $\sinh$ and $\cosh$ in  characteristic $p$. By Proposition~\ref{prop:monomially}, the corresponding \monominduced solutions, $\sinh_p$ and $\cosh_p$, also have algebraic projections.\\ 
 
The same argument applies for the equation $y''+y=0$ and $\ch(K)>2$. The two series 
\[\frac{1}{2}\left(\exp_p(z,ix)+\exp_p(z,-ix)\right),\] 
\[\frac{1}{2}\left(\exp_p(z,ix)-\exp_p(z,-ix)\right),\] 
where $i\in\overline\F_p$ is a square root of $-1$, form a basis of solutions. This proves: 
 
\begin{prop}\label{prop:trigalg} 
The projections of $\cosh_p,\ \sinh_p,\ \cos_p,\ \sin_p$ are all algebraic. 
\end{prop}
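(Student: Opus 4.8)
The plan is to reduce everything about the trigonometric functions to the already-established algebraicity of the exponential function via the classical Euler-type substitutions. First I would recall that by Corollary~\ref{cor:algexp}, the projections $\pi_k(\expp(z,x))$ are algebraic over $\F_p(x,z_1,\ldots,z_k)$ for all $k$, and that the same therefore holds for $\expp(z,-x)$ (this is just a substitution $x\mapsto -x$, which preserves both the projection structure and algebraicity, since $\F_p(x,z_1,\ldots,z_k)$ is stable under $x\mapsto -x$). Since $\ch K = p > 2$ by assumption, $2$ is invertible, so the series $\eve(\expp) = \tfrac12(\expp(z,x)+\expp(z,-x))$ and $\odd(\expp) = \tfrac12(\expp(z,x)-\expp(z,-x))$ are $\F_p$-linear combinations of series with algebraic projections, hence have algebraic projections themselves; moreover, as noted in the text, they are solutions of $y'' - y = 0$. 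By Proposition~\ref{prop:monomially}, applied to the operator $L = \partial^2 - 1$, the fact that $y''-y=0$ admits \emph{a} basis of solutions with algebraic projections forces its \monominduced basis $\cosh_p, \sinh_p$ to have algebraic projections as well.

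Next I would treat $\cos_p$ and $\sin_p$ in exactly the same way, but now working over $\overline\F_p$ rather than $\F_p$. Fix $i\in\overline\F_p$ with $i^2 = -1$ (such an element exists in $\overline\F_p$; it lies in $\F_p$ itself if $p\equiv 1\bmod 4$ and in $\F_{p^2}$ otherwise, but we may just work over $\overline\F_p$ throughout). The series $\expp(z,ix)$ and $\expp(z,-ix)$ are obtained from $\expp(z,x)$ by the substitution $x\mapsto \pm i x$; this is an $\overline\F_p(z_1,z_2,\ldots)$-algebra operation sending $\F_p(x,z_1,\ldots,z_k)$ into $\overline\F_p(x,z_1,\ldots,z_k)$ and preserving algebraicity and the projection structure. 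Hence $\tfrac12(\expp(z,ix)\pm\expp(z,-ix))$ have algebraic projections over $\overline\F_p(x,z_1,\ldots,z_k)$, and they are (up to the factor $i$ in the second one, which is a constant and can be absorbed) solutions of $y''+y=0$, forming a $\Cp$-basis of its solution space. One subtlety: the base field $K$ of the theory of Section~\ref{sec:Fuchs} must accommodate $i$, so we should pass to $K = \overline\F_p$ (or $\F_{p^2}$); since the whole formalism of \cite{FH23} and Proposition~\ref{prop:monomially} is stated to work over arbitrary fields of characteristic $p$ with local exponents in the prime field, and $\cos_p,\sin_p$ have coefficients in $\F_p$ already, algebraicity over $\overline\F_p(x,z_1,\ldots,z_k)$ of an element of $\F_p[z_1,\ldots,z_k]\osb x\csb$ descends to algebraicity over $\F_p(x,z_1,\ldots,z_k)$ anyway. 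Then Proposition~\ref{prop:monomially} applied to $L = \partial^2 + 1$ over $\overline\F_p$ gives that the \monominduced solutions $\cos_p, \sin_p$ of $y''+y=0$ have algebraic projections.

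Combining the two cases yields the claim for all four functions $\cosh_p, \sinh_p, \cos_p, \sin_p$ simultaneously, which is exactly the statement of Proposition~\ref{prop:trigalg}.

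I expect the only genuine subtlety — hardly an obstacle — to be bookkeeping about the base field: verifying that introducing $i\in\overline\F_p$ does not disturb the hypotheses of Proposition~\ref{prop:monomially} (which needs a full $\Cp$-basis of solutions with algebraic projections over the appropriate rational function field) and that algebraicity over $\overline\F_p(x,z_1,\ldots,z_k)$ of a series with coefficients in $\F_p$ is equivalent to algebraicity over $\F_p(x,z_1,\ldots,z_k)$ (this is standard: the minimal polynomial over the larger field has coefficients fixed by $\mathrm{Gal}(\overline\F_p/\F_p)$ by uniqueness, hence lies in $\F_p(x,z_1,\ldots,z_k)[T]$). Everything else is a direct application of Corollary~\ref{cor:algexp} and Proposition~\ref{prop:monomially}, together with the evident stability of the classes ``has algebraic projections'' and ``is a solution of $y''\mp y = 0$'' under the substitutions $x\mapsto -x$ and $x\mapsto \pm ix$ and under $\F_p$-linear (resp.\ $\overline\F_p$-linear) combinations.
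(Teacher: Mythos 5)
Your proposal is correct and follows essentially the same route as the paper: decompose $\exp_p$ into even and odd parts (resp.\ substitute $x\mapsto\pm ix$ over $\overline{\F}_p$) to obtain a basis of solutions of $y''\mp y=0$ with algebraic projections via Corollary~\ref{cor:algexp}, then transfer to the xeric basis by Proposition~\ref{prop:monomially}. The extra care you take with the base-field extension and Galois descent for $\cos_p,\sin_p$ is a point the paper leaves implicit, but it is not a departure from its argument.
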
 
 
The next observation is somewhat more surprising.

\begin{prop} \label{prop:trigrel} 
Let $\sinh_p$ and $\cosh_p$ denote the \monominduced  solutions of $y''-y=0$ with respect to the local exponents $\rho_1=1$ and $\rho_2 =1$. Then the following identity holds, 
\[\exp_p=\cosh_p+\frac{1}{1-\sigma^p}\sinh_p,\] 
where $\sigma(x)=x+x^p+x^{p^2}+\ldots$ 
\end{prop}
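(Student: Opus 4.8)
The plan is to identify the right-hand side as the unique xeric solution of $y''-y=0$ with initial monomial $1$ and then to invoke uniqueness of xeric solutions. Concretely, I would first verify that $w\coloneqq\cosh_p+\tfrac{1}{1-\sigma^p}\sinh_p$ is a solution of $y''-y=0$. Since $\sigma'=1$ and hence $\sigma^p\in\mathcal{C}_p$ (indeed $\sigma=x+\sigma^p$, so $(\sigma^p)'=0$), the coefficient $\tfrac{1}{1-\sigma^p}$ is a constant; therefore $w$ is a $\mathcal{C}_p$-linear combination of the two solutions $\cosh_p$ and $\sinh_p$, hence itself a solution of $y''-y=0$.

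Next I would pin down the section data. The local exponents of $y''-y=0$ are $0$ and $1$, each of multiplicity $1$. The xeric solution $\sinh_p$ associated to $\rho=1$ satisfies $\langle\sinh_p\rangle_{1,0^*}=x$ and $\langle\sinh_p\rangle_{0,0^*}=0$; the xeric solution $\cosh_p$ associated to $\rho=0$ satisfies $\langle\cosh_p\rangle_{0,0^*}=1$ and $\langle\cosh_p\rangle_{1,0^*}=0$. So the only $p$th-power monomials that could obstruct xericity of $w$ are the constant $1$ (coming from $\cosh_p$) and $x^{p^k}$-type monomials. The key computation is to show that the ``$\sigma$''-correction exactly cancels the spurious $p$th powers of $\exp_p$: one checks that $\langle\exp_p\rangle_{0,0^*}$ and $\langle\exp_p\rangle_{1,0^*}$ are governed by $\sigma$, and that subtracting $\cosh_p+\tfrac{1}{1-\sigma^p}\sinh_p$ removes them. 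I expect the cleanest route is to compare the ``even'' and ``odd'' parts: from $\exp_p=\eve(\exp_p)+\odd(\exp_p)$ and the fact that $\eve(\exp_p),\odd(\exp_p)$ solve $y''-y=0$, write $\eve(\exp_p)=\cosh_p+c_1\cdot(\text{multiple of }\sinh_p)$ and $\odd(\exp_p)=c_2\cdot\sinh_p+c_3\cosh_p$ for constants $c_i\in\mathcal{C}_p$, determine the $c_i$ by examining sections, and add. Alternatively, and perhaps more in the spirit of the paper, one shows directly that $\exp_p-w$ is a solution of $y''-y=0$ all of whose sections $\langle\cdot\rangle_{0,0^*}$ and $\langle\cdot\rangle_{1,0^*}$ vanish; by the uniqueness clause for xeric bases (the only solution with all obstructing sections zero is $0$), this forces $\exp_p=w$.

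To make the section bookkeeping concrete I would use the product formula $\exp_p=\prod_{i\ge 0}H((-1)^ig_i)$, or rather just its leading behaviour modulo higher $z$-variables: $\exp_p\equiv H(\sigma(x))\bmod z_1$, and $H(\sigma(x))$ is an explicit algebraic series in $x$ only. The identity to check then reduces, after projecting away all $z_i$, to a scalar identity among series in $\F_p\orb x\crb$: that the initial series of $\cosh_p$ plus $\tfrac{1}{1-\sigma^p}$ times the initial series of $\sinh_p$ equals $H(\sigma(x))=\exp_p\vert_{z=0}$. Here the relation $\sigma=x+\sigma^p$ and the logarithmic-derivative identity $H(\sigma)'/H(\sigma)=\sigma/x$ from Lemma~\ref{lem:magic} do the work: $H(\sigma)$ and $H(-\sigma)$ span the solution space of $y''-y=0$ over $\F_p(x^p)$, their even/odd parts are (constant multiples of) the initial series of $\cosh_p$ and $\sinh_p$, and matching the constant of proportionality produces exactly the factor $\tfrac{1}{1-\sigma^p}$ because $\odd(H(\sigma))=\tfrac12(H(\sigma)-H(-\sigma))$ carries a factor $\sigma$ in its lowest term while $\sinh_p$ is normalised to have lowest term $x$, and $\sigma/x$ evaluated in the constant field differs from $1$ precisely by $\tfrac{1}{1-\sigma^p}$ after using $x=\sigma(1-\sigma^{p-1})\cdot(\ldots)$; I would compute this coefficient carefully. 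Finally, lifting from the initial series back to the full solution in $\Rp$ is automatic: two solutions of $y''-y=0$ in $\Rp$ whose initial series agree and whose obstructing sections agree are equal, again by uniqueness of the xeric basis.

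The main obstacle I anticipate is the constant-matching step: showing that the precise constant multiplying $\sinh_p$ is $\tfrac{1}{1-\sigma^p}$ rather than some other element of $\mathcal{C}_p$, and confirming that no contribution proportional to $\cosh_p$ leaks into $\odd(\exp_p)$ (i.e.\ that $\odd(\exp_p)$ is a pure $\sinh_p$-multiple) and symmetrically that $\eve(\exp_p)=\cosh_p$ exactly, with coefficient $1$. This is really a statement about which $p$th-power monomials occur in $\exp_p$: by xericity of $\exp_p$, the only obstructing monomial present is the constant $1$, which lands in the even part; so $\eve(\exp_p)$ has obstructing section $\langle\cdot\rangle_{0,0^*}=1$ and $\langle\cdot\rangle_{1,0^*}=0$, hence equals $\cosh_p$ by uniqueness, while $\odd(\exp_p)$ has both obstructing sections zero except possibly $\langle\cdot\rangle_{1,0^*}$, which equals $[x^1]\odd(\exp_p)\cdot x=x$ since $[x]\exp_p=1$. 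That pins $\odd(\exp_p)$ down to a solution with $\langle\cdot\rangle_{1,0^*}=x$, i.e.\ to $\sinh_p$ up to adding a $\mathcal{C}_p$-multiple of $\cosh_p$ — and the $\cosh_p$-component must vanish since $\odd(\exp_p)$ is odd while $\cosh_p$ is even. The only genuine computation left is then $\odd(\exp_p)=\lambda\sinh_p$ with $\lambda=\tfrac{1}{1-\sigma^p}$, which I would extract from the $x$-adically leading terms using $[x]\odd(\exp_p)=1$ and the known normalisation $[x]\sinh_p=1$, together with a short induction showing the ratio stabilises to $\tfrac{1}{1-\sigma^p}$; this is where I would spend the most care.
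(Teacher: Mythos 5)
Your overall strategy (pin down $w=\cosh_p+\frac{1}{1-\sigma^p}\sinh_p$ as a solution with the same obstructing sections as $\exp_p$ and invoke uniqueness) is sound, and your observation that $\frac{1}{1-\sigma^p}\in\Cp$ is correct since $\sigma^p=\sigma-x$ has derivative zero. But the concrete route you choose rests on a false identification of the sections. You claim that $\langle\odd(\exp_p)\rangle_{1,0^*}=[x^1]\odd(\exp_p)\cdot x=x$ and that $\langle\eve(\exp_p)\rangle_{1,0^*}=0$, concluding $\eve(\exp_p)=\cosh_p$ and $\odd(\exp_p)=\lambda\sinh_p$ with $\lambda$ determined by the degree-$1$ term. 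This misreads the section operator: $\langle\cdot\rangle_{1,0^*}$ collects \emph{all} monomials $x^kz^\alpha$ with $k\equiv 1\bmod p$ and $\alpha\in(p\Z)^{(\N)}$, not just the coefficient of $x^1$. Xericity of $\exp_p$, as a solution of the \emph{first-order} equation $y'=y$ whose only local exponent is $0$, constrains only $\langle\exp_p\rangle_{0,0}=1$ and says nothing about $\langle\exp_p\rangle_{1,0}$; the whole content of the proposition is that this second section equals $\frac{x}{1-\sigma^p}\neq x$. Already for $p=3$ the coefficient of $x^4$ in $\exp_3$ is $2z_1+1$, contributing $x^4$ to $\langle\exp_3\rangle_{1,0}$ and hence to $\langle\eve(\exp_3)\rangle_{1,0}$, so $\eve(\exp_3)\neq\cosh_3$. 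In fact $\eve(\exp_p)=\cosh_p+\frac{\sigma^p}{1-\sigma^{2p}}\sinh_p$ and $\odd(\exp_p)=\frac{1}{1-\sigma^{2p}}\sinh_p$: both of your structural claims fail, and following your plan literally would produce the coefficient $1$ (or $\frac{1}{1-\sigma^{2p}}$ after repair) instead of $\frac{1}{1-\sigma^p}$.

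The second gap is that the genuinely hard step — evaluating $K=\frac{1}{x}\langle\exp_p\rangle_{1,0}$ — is only gestured at (``a short induction showing the ratio stabilises''). A leading-term comparison in the $x$-adic sense only determines the constant term of the $\Cp$-coefficient, not the full series in $x^p$. The paper's proof spends most of its effort exactly here: it passes to $\exppt=\expp\langle\exppt\rangle_{0,0}$, uses that sections factor across the decomposition $\exppt=h_0\cdot(h_0^{-1}\exppt)$ because $h_0=H(\sigma)$ involves only $x$ while the cofactor is a series in $x^p$ with $\pi_0=1$, and then combines $\langle h_0\rangle_{1,0}=x\langle h_0'\rangle_{0,0}$ with the logarithmic-derivative identity $h_0'/h_0=\sigma/x$ to solve $\langle h_0'\rangle_{0,0}=\langle h_0\rangle_{0,0}+\sigma^p\langle h_0'\rangle_{0,0}$, giving $K=\frac{1}{1-\sigma^p}$. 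Your reduction ``after projecting away all $z_i$'' is not automatic either: $\langle\cdot\rangle_{1,0}$ retains monomials such as $z_1^p x^k$ that $\pi_0$ kills, so justifying that the computation depends only on $h_0=\pi_0(\exppt)$ requires precisely the multiplicativity argument above. You would need to supply both of these ingredients to complete the proof.
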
 
 
\begin{rem} In this formula, there is an asymmetry between $\sinh_p$ and $\cosh_p$. On the other hand, by definition the symmetric formula $\expp= \eve(\expp)+\odd(\expp)$ holds. 
\end{rem} 
 
\begin{proof} The functions $\sinh_p$ and $\cosh_p$, as \monominduced solutions, are uniquely determined as the solutions of $y''=y$ with  $\langle \sinh_p\rangle_{0,0}=0, \langle \sinh_p\rangle_{1,0}=1$ respectively $\langle \cosh_p\rangle_{0,0}=1, \langle \cosh_p\rangle_{1,0}=0$.\\ 
 
Write $\eve_p$ and $\odd_p$ for $\eve(\exp_p)$ and $\odd(\exp_p)$. Note that $\langle \exp_p\rangle_{0,0}=1$ and consequently $\langle \odd_p\rangle_{0,0}=0$ and $\langle \eve_p\rangle_{0,0}=1$. A short computation shows that 
\[\sinh_p=\odd_p\cdot \frac{x}{\langle \odd_p\rangle_{1,0}}\] 
and 
\[\cosh_p=\eve_p - \sinh_p \frac{\langle  \eve_p\rangle_{1,0}}{x}=\eve_p-\odd_p\frac{\langle  \eve_p\rangle_{1,0}}{\langle \odd_p\rangle_{1,0}}, \] 
since for example 
\[ \left\langle \odd_p\cdot \frac{x}{\langle \odd_p\rangle_{1,0}}\right \rangle_{1,0}=\langle \odd_p \rangle_{1,0}\cdot \frac{x}{\langle \odd_p\rangle_{1,0}}=x \] 
holds. As $\exp_p=\odd_p+\eve_p$ we obtain 
\[\exp_p= \cosh_p + \frac{1}{x}\sinh_p \cdot \langle \odd_p+\eve_p\rangle_{1,0}.\] 
We set 
\[K\coloneqq \frac{1}{x}\langle \odd_p+\eve_p\rangle_{1,0}=\frac{1}{x}\langle \exp_p\rangle_{1,0},\] 
and we are left to show that $\displaystyle K=\frac{1}{1-\sigma^p}.$\\ 
 
Recall the infinite product decomposition of the solution $\exppt=h_0\cdot h_1\cdot h_2\cdots$ of $y'=y$, where $h_i=H((-1)^i g_i)$, the $g_i$ are defined recursively and $H$ is a polynomial satisfying $H(s)=(1-s^{p-1})H'(s)$, see Lemma~\ref{lem:magic}. Then $\exppt = \expp \langle \exppt \rangle_{0,0}$ and substituting in the definition of $K$ we obtain 
\[xK \langle \exppt \rangle_{0,0}= \langle \exppt \rangle_{1,0}.\] 
We have the equality 
\[ \langle \exppt\rangle_{0,0} = \langle h_0\rangle _{0,0}\cdot \langle h_0^{-1}\exppt\rangle_{0,0}.\] 
 Indeed, $h_0^{-1}\exppt=h_1\cdot h_2\cdots$ is a series in $x^p$ with coefficients in $\F_p(z_1, z_2,\ldots)$ and such that $\pi_0\big(h_0^{-1}\exppt\big)=1$. Moreover, $h_0$ is a series in $x$ only. Therefore a monomial in $\langle \exppt\rangle_{0,0}$ can be written uniquely as a product of a monomial in $\langle h_0\rangle _{0,0}$ and a monomial in $\langle h_0^{-1}\exppt\rangle_{0,0}$.\\ 
 
Similarly we obtain 
\[ \langle \exppt\rangle_{1,0} = \langle h_0\rangle _{1,0}\cdot \langle h_0^{-1}\exppt\rangle_{0,0}\] 
and consequently 
\[xK \langle \exppt \rangle_{0,0}= \langle h_0\rangle _{1,0}\cdot \langle h_0^{-1}\exppt\rangle_{0,0}= \langle h_0\rangle _{1,0}\cdot\langle \exppt\rangle_{0,0}\cdot \langle h_0\rangle _{0,0}^{-1}.\] 
Using that $\langle h_0\rangle_{1,0}=x\langle h_0'\rangle_{0,0}$ we get $K=\langle h_0'\rangle_{0,0}\langle h_0\rangle_{0,0}^{-1}.$ Recall that $h_0=H(\sigma)$ and thus, by Lemma~\ref{lem:magic} and the identity $\sigma-\sigma^p=x$ we have 
\[\frac{h_0'}{h_0}=\frac{\sigma}{\sigma-\sigma^{p}}=\frac{\sigma}{x}.\] 
Moreover, 
\[\langle h_0'\rangle_{0,0}=\left \langle \frac{h_0 \sigma}{x}\right \rangle_{0,0}= \langle h_0 \rangle_{0,0}+ \sigma^p\left \langle \frac{h_0}{x}\right \rangle_{0,0}= \langle h_0\rangle_{0,0}+\sigma^p\langle h_0'\rangle_{0,0}\] 
and thus $\displaystyle K=\frac{1}{1-\sigma^p}$. 
\end{proof} 
 
The considerations in this chapter motivate to investigate the following two problems. Firstly, the concept of xeric series is intended to select among the numerous solutions of a differential equation some ``distinguished'' and hence unique ones. However, the choice of this basis is not as ``natural" as one could hope for, a testimony of which is the asymmetric formula in Proposition \ref{prop:trigrel}. Also the exponential function $\exppt$, characterized by Proposition~\ref{prop:period}, and the solutions $\eve(\expp)$ and $\odd(\expp)$ of the differential equation $y+y''=0$ have noticeable properties among the solutions of their respective equations. So the quest for truly distinguished and natural solutions remains open. 
 
Secondly, Problem~\ref{quest:algebraicity}, or, equivalently, Problem~\ref{quest:algmonom}, has been solved for the second order differential equations $y''=y$ and $y''=-y$ in this last section, proving the desired algebraicity. For arbitrary linear differential equations of order greater than or equal to two it is still unclear if the projections of suitably chosen solutions are again algebraic. 
 
\sloppy 
\printbibliography 
\addcontentsline{toc}{section}{References}

\textsc{University of Vienna, Faculty of Mathematics, Oskar-Morgenstern-Platz 1, 1090, Vienna, Austria}\\ 
\textit{Email: }\href{mailto:florian.fuernsinn@univie.ac.at}{\texttt{florian.fuernsinn@univie.ac.at}}\\ 
\textit{Email: }\href{mailto:herwig.hauser@univie.ac.at}{\texttt{herwig.hauser@univie.ac.at}}\\ 
 
\textsc{Chubu University, College of Science and Engineering, Matsumoto-cho, Kasugai-shi, Aichi 487-8501, Japan} 
 
\textsc{Kyoto University, Research Institute for Mathematical Sciences,  Kitashirakawa-Oiwakecho, Sakyo-ku, Kyoto 606-8502, Japan}\\ 
\textit{Email: }\href{mailto:kawanoue@kurims.kyoto-u.ac.jp}{\texttt{kawanoue@kurims.kyoto-u.ac.jp}} 
\end{document}